\documentclass[11pt]{article}
\usepackage{amsmath,amssymb,amsfonts,amsthm,amscd,mathrsfs}
\usepackage{latexsym, euscript, epic, eepic}
\usepackage{graphicx}
\usepackage{verbatim}
\usepackage{fourier}
\usepackage{tikz}

\usepackage[colorlinks=true,linkcolor=black,citecolor=black,urlcolor=black]{hyperref}

\makeatletter
\newcommand{\subsectionruninhead}{\@startsection{subsection}{2}{0mm}
{-\baselineskip}{-0mm}{\bf\large}}
\newcommand{\subsubsectionruninhead}{\@startsection{subsubsection}{3}{0mm}
{-\baselineskip}{-0mm}{\bf\normalsize}}
\makeatother

\newtheorem*{theorem*}{Theorem}

\newtheorem{theoremalph}{Theorem}

\newtheorem*{proposition*}{Proposition}
\newtheorem*{corollary*}{Corollary}
\newtheorem*{claim*}{Claim}
\newtheorem*{remark*}{Remark}
\newtheorem*{problem*}{Problem}
\newtheorem{theorem}{Theorem}[section]

\newtheorem{proposition}[theorem]{Proposition}
\newtheorem{corollary}[theorem]{Corollary}
\newtheorem{lemma}[theorem]{Lemma}

\theoremstyle{definition}
\newtheorem{definition}[theorem]{Definition}
\newtheorem{remark}[theorem]{Remark}

\numberwithin{equation}{section}

\newcommand{\htop}{h_{\text{top}}}

\begin{document}

\title{Analytic Paths of Ergodic Measures with Prescribed Invariants}

\author{{Xiaobo Hou$^{1,\S}$ and Xueting Tian$^{2,\dag}$}\\
		{\em\small$^1$ School of Mathematical Science,   Dalian University of Technology}\\
	{\em\small Dalian 116024, People's Republic of China}\\
	{\em\small$^2$ School of Mathematical Science,  Fudan University}\\
	{\em\small Shanghai 200433, People's Republic of China}\\
	{\small $^\S$Email: xiaobohou@dlut.edu.cn;   $^{\dag}$Email: xuetingtian@fudan.edu.cn}\\}

\date{}

\footnotetext{Key words and phrases: Ergodic measures; Analytic paths; Metric entropy; Birkhoff averages; Lyapunov exponents; Hyperbolic dynamics; Partially hyperbolic dynamics.}

\footnotetext{2020 Mathematics Subject Classification: 37D20, 37D30, 37D35, 37A35, 37C45.}

\maketitle

\begin{abstract}
	We study analytic paths of ergodic measures under quantitative constraints.
	For uniformly hyperbolic systems, we construct one-parameter families of
	ergodic measures whose prescribed Birkhoff averages vary affinely, whose
	metric entropies vary analytically, and whose endpoint entropy values are
	realized exactly. Along these paths, the integral of every H\"older continuous
	observable depends analytically on the parameter. In the mixing case, the
	measures may be chosen to be Bernoulli.
	
	We also prove a dimension counterpart for average conformal hyperbolic sets:
	the Hausdorff dimensions of the measures vary analytically along the path.
	
	Finally, for a class of partially hyperbolic diffeomorphisms with
	one-dimensional center, we construct analytic paths of ergodic measures whose
	center Lyapunov exponents are prescribed linearly and whose entropies vary
	analytically.
\end{abstract}

\tableofcontents

\section{Introduction}

\qquad A basic problem in dynamics is to understand how ergodic measures realize quantitative invariants such as metric entropy, Birkhoff averages, and Lyapunov exponents. In many settings one can prescribe one or more of these quantities and construct  ergodic measures realizing them. A natural next question is whether such realizations can be organized into analytic one-parameter families: 
\begin{quote}
	\itshape
	Can quantitative invariants be realized along analytic paths of ergodic measures?
\end{quote}

This question is naturally related to the \emph{flexibility program} in dynamics, initiated by Katok, which asks whether a given dynamical quantity can attain arbitrary values within a prescribed class of systems. One aspect of this program concerns entropy: starting from Katok's seminal work on horseshoes of large entropy, subsequent works have shown that ergodic measures realize arbitrary intermediate entropy values in many settings \cite{Katok1980,GuanSunWu2017,HuangXuXu2021,LiOprocha2018,LiShiWangWang2020,Sun2025,Burguet2020,ChandgotiaMeyerovitch2021,QuasSoo2016,Ures2012,YangZhang2020}. A second aspect concerns Lyapunov exponents: Bochi et al.\ made this perspective explicit for volume-preserving diffeomorphisms, and  work of B\'ar\'any et al., D\'iaz et al.,  and Tian et al.\ showed that ergodic measures can realize nontrivial ranges of Lyapunov exponents in several settings \cite{BochiKatokRodriguezHertz2021,DGR2019,BJKR2021,TianWangWang2019}. A third aspect concerns joint constraints, namely the simultaneous prescription of entropy with Birkhoff averages or Lyapunov exponents. In this direction, Dong et al. developed multi-horseshoe methods for entropy, Birkhoff average, and Lyapunov exponent constraints in hyperbolic, singular hyperbolic, and certain nonhyperbolic settings \cite{DHT,HouTian2023}. Building on this line of work, D\'iaz et al.\ established continuous paths of ergodic measures with fixed center exponent for certain partially hyperbolic diffeomorphisms with one-dimensional center \cite{DGMZ2025}. In contrast to these existence and continuous path results, we study whether such realizations can be organized into analytic paths of ergodic measures.

A further motivation comes from the connectedness theory of ergodic measures. Sigmund proved that the set of ergodic measures is connected for mixing subshifts of finite type, and hence for basic hyperbolic sets via Markov partitions \cite{Sigmund1977}. Later, Konieczny et al. obtained related results in the $\bar d$-topology for symbolic systems with a safe symbol and linked them to intermediate entropy phenomena \cite{Konieczny2018}. More recently, Sun proved  connectedness for ergodic invariant measures of systems with the approximate product property \cite{Sun2025}. These works motivate a quantitative refinement of connectedness: rather than merely connecting ergodic measures, one seeks connecting paths along which averages, exponents, entropy, or dimension are prescribed. Our goal is to realize such paths with analytic regularity.

The problem is also connected with multifractal analysis. For fixed potentials, the level sets of Birkhoff averages or Lyapunov exponents define natural subsets of the phase space, and a classical question is to determine their entropy or dimension; see, for instance, \cite{Fan2021,HeWolf2022,BJKR2021,Climenhaga2013,JohanssonJordanObergPollicott2010,Jenkinson2001,DGR2019,IJ2015,JT2021,JordanRams2021,PS2007,LiuLiu2025,BS2001,BarreiraHolanda2021,KucherenkoWolf2014,PesinSadovskaya2001}.  In the present paper, these multifractal level sets serve as the constraint classes inside which we construct analytic paths of ergodic measures with prescribed entropy, dimension, or Lyapunov exponents.

The goal of the present article is to construct analytic paths of ergodic measures under prescribed quantitative constraints. While existing intermediate value results typically guarantee the existence of a single ergodic measure realizing a given entropy, dimension, or Lyapunov exponent, we investigate whether such realizations can be organized into analytic families. Along these paths, Birkhoff averages or Lyapunov exponents are fixed pointwise, while entropy or dimension varies analytically with the parameter. This leads to a notion of constrained analytic connectedness, rather than ordinary connectedness in the space of ergodic measures.

In uniformly hyperbolic systems, we construct families $\{\mu_t\}_{t\in[0,1]}$ of ergodic measures for which the prescribed Birkhoff averages move linearly between two given values, while the metric entropy is analytic and realizes the prescribed endpoint values exactly. The path is analytic in the sense that, for every H\"older
continuous observable $\psi$, the map
$$
t\mapsto \int \psi\,d\mu_t
$$
is analytic. In the mixing case, the measures can be chosen among Bernoulli measures. We also obtain a dimension counterpart for average conformal hyperbolic sets, where the Hausdorff dimension of the measures varies analytically along the path. 

In addition, for a class of partially hyperbolic diffeomorphisms with one dimensional center, we construct analytic paths of ergodic measures whose center Lyapunov exponents are prescribed linearly and whose entropies vary analytically.

Throughout the paper, a dynamical system $(X,f)$ means that $(X,\rho)$ is a compact metric space and $f:X\to X$ is continuous.
We denote by $\mathcal{M}(X)$, $\mathcal{M}_f(X)$ and $\mathcal{M}_{f}^{e}(X)$ the sets of probability measures, $f$-invariant probability measures and $f$-ergodic probability measures, respectively. The spaces of continuous functions on $X$ and H\"older continuous functions of exponent $\gamma$ on $X$ are denoted by $C(X)$ and $C^\gamma(X)$, respectively.
Given a positive integer $d$, let $\Phi=\left(\varphi_1, \ldots, \varphi_d\right)\in C(X)^d$.
For $\alpha=\left(\alpha_1, \ldots, \alpha_d\right)\in\mathbb{R}^d$, set
$$
K_\alpha=K_\alpha(\Phi;f)=\bigcap_{i=1}^d\left\{x \in X: \lim _{n \rightarrow \infty} \frac{\sum_{k=0}^{n-1} \varphi_i\left(f^k x\right)}{n}=\alpha_i\right\}.
$$
Define
$$
\mathcal{P}(\mu)=\left(\int_X \varphi_1 \mathrm{~d} \mu, \ldots, \int_X \varphi_d \mathrm{~d} \mu\right) .
$$
For $1\leq i\leq d$, write $\mathcal{P}_i(\mu)=\int_X \varphi_i\,\mathrm{d}\mu$. Since $\mathcal{M}_f(X)$ is compact and convex and $\mathcal{P}$ is continuous, the set $\mathcal{P}(\mathcal{M}_f(X))$ is compact and convex. We denote its interior by $\operatorname{Int}\mathcal{P}(\mathcal{M}_f(X))$. The support of a measure $\mu$ is denoted by $S_\mu$, and the set of Bernoulli measures by $\mathcal{M}_f^B(X)$.

Our first result concerns uniformly hyperbolic systems and provides analytic paths realizing both prescribed averages and prescribed entropies. For later reference, we label the following three classes of systems by $(S1)$, $(S2)$ and $(S3)$.
\begin{enumerate}
	\item[(S1)] $f:X\to X$ is a subshift of finite type;
	\item[(S2)] $X$ is a locally maximal hyperbolic set of a $C^{1+\varepsilon}$ diffeomorphism $f$ for some $\varepsilon>0$;
	\item[(S3)] $X$ is a repeller  of a $C^{1+\varepsilon}$ map $f$ for some $\varepsilon>0$.
\end{enumerate}

\begin{theoremalph}\label{Thm-int}
	Let $(X,f)$ be a dynamical system satisfying one of the conditions $(S1)$, $(S2)$ and $(S3)$.
	Assume that $(X,f)$ is topologically 
	 mixing (resp. topologically  transitive). Given $d\in\mathbb{N}^+,$ $0<\gamma\leq 1$ and a vector $\Phi \in C^\gamma(X)^d$.  Then for any $\hat{\alpha},\tilde{\alpha}\in \operatorname{Int}\mathcal{P}(\mathcal{M}_f(X))$, $0< \hat{h}\leq h_{top}(f, K_{\hat{\alpha}})$ and $0<  \tilde{h}\leq  h_{top}(f, K_{\tilde{\alpha}}),$  there is a  path $$\{\mu_t\}_{t\in[0,1]}\subset \mathcal{M}_f^B(X)\qquad \left(\text{resp.}  ~\{\mu_t\}_{t\in[0,1]}\subset \mathcal{M}_f^e(X)\right)$$ such that 
	\begin{enumerate}
		\item the map $t\mapsto \int \psi d\mu_t$ is  analytic for any H\"older continuous function $\psi$;
		\item the map $t\to h_{\mu_t}(f)$ is analytic and $\inf_{t\in [0,1]}h_{\mu_t}(f)=\min\{\hat{h},\tilde{h}\}$;
		\item $(\mathcal{P}(\mu_0),h_{\mu_0}(f))=(\tilde{\alpha},\tilde{h})\text{ and }(\mathcal{P}(\mu_1),h_{\mu_1}(f))=(\hat{\alpha},\hat{h})$;
		\item    $\mathcal{P}_i(\mu_t)=t\hat{\alpha}_i+(1-t)\tilde{\alpha}_i$ for any $1\leq i \leq d$ and $t\in[0,1]$. In particular, if $\hat{\alpha}=\tilde{\alpha}=\alpha,$ then $\mathcal{P}(\mu_t)=\alpha$ for any $t\in[0,1]$;
		\item $S_{\mu_t}=X$ for any $t\in[0,1].$ 
	\end{enumerate}
\end{theoremalph}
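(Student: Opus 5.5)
We reduce to the symbolic case and construct the path out of equilibrium states. Through Markov partitions, $(S2)$ and $(S3)$ become $(S1)$: the coding map is Hölder and injective off a set that every fully supported equilibrium state gives measure zero. It preserves entropy, Birkhoff averages of Hölder observables, Bernoullicity in the mixing case, and full support. So we assume $(X,f)$ is a topologically mixing subshift of finite type. The transitive case follows from the spectral decomposition $X=X_1\cup\dots\cup X_p$ with $f^p|_{X_j}$ mixing, giving ergodic rather than Bernoulli measures.

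For Hölder $\Phi$ and $q\in\mathbb{R}^d$, let $\mu_q$ be the unique equilibrium state of $\langle q,\Phi\rangle$. It is Bernoulli, has full support, and depends real-analytically on $q$, in the sense that $q\mapsto\int\psi\,d\mu_q$ is analytic for every Hölder $\psi$ (Ruelle's perturbation theory of the transfer operator). Moreover $q\mapsto \mathcal P(\mu_q)=\nabla P(\langle q,\Phi\rangle)$ is a real-analytic diffeomorphism onto $\operatorname{Int}\mathcal{P}(\mathcal{M}_f(X))$, after quotienting out directions in which $\langle q,\Phi\rangle$ is cohomologous to a constant. Also $h_{\mu_q}=h_{top}(f,K_{\mathcal P(\mu_q)})$ is the maximal entropy at that level.

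\textbf{Lowering entropy at fixed averages.} To reach $\hat h<h_{top}(f,K_{\hat\alpha})$ exactly, enlarge the potential to $\langle q,\Phi\rangle+s\phi$ with an auxiliary Hölder $\phi$. Choose $\phi$ not cohomologous to any combination of $\Phi$ and constants; for instance, take $-\phi$ to be the potential of a periodic-orbit-concentrating family. Along the two-parameter family $(q,s)$, solve $\mathcal P(\mu_{q,s})=\alpha$ by the implicit function theorem. This gives an analytic curve $s\mapsto q(s)$, and along it the entropy decreases strictly from $h_{top}(f,K_\alpha)$ toward $\inf\{h_\mu:\mathcal P(\mu)=\alpha\}$. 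To make this infimum equal $0$, pick $\phi$ so that the limiting measures as $s\to\infty$ are supported on a zero-entropy set, such as a measure on finitely many periodic orbits with averages $\alpha$. This is possible because $\alpha$ is interior, so $\alpha$ is a convex combination of periodic orbit averages. In practice we would let $\phi$ be minus a distance to a finite union of periodic orbits, which is Lipschitz, and check that the ground-state entropy is $0$. Then every value in $(0,h_{top}(f,K_\alpha)]$ is attained.

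\textbf{The path.} Parametrize the higher-dimensional family $\mu_{q,s}$, with $(q,s)\in\mathbb{R}^{d+1}$, by the analytic map $F(q,s)=(\mathcal P(\mu_{q,s}),h_{\mu_{q,s}})$. Its nondegeneracy, meaning rank $d+1$ where $\partial_s h\ne0$, follows from the variance formula $D^2P>0$ on the non-cohomologous span. The image of $F$ contains the region $\{(\alpha,h):\alpha\in\operatorname{Int},\,0<h\le h_{top}(f,K_\alpha)\}$, where the boundary $s=0$ gives maximal entropy. Take the target curve $\gamma(t)=\bigl(t\hat\alpha+(1-t)\tilde\alpha,\,h(t)\bigr)$ with $h$ analytic, $h(0)=\tilde h$, $h(1)=\hat h$, $h$ monotone so that $\inf h=\min\{\hat h,\tilde h\}$, and $h(t)\le h_{top}(f,K_{\alpha(t)})$. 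Such an $h$ exists because $\alpha\mapsto h_{top}(f,K_\alpha)$ is concave and hence stays above the chord. An affine $h$ works, since the chord lies below the concave function; if an endpoint equals the top, this still holds by concavity. Lift $\gamma$ analytically through $F$ via the implicit function theorem to get $(q(t),s(t))$, and set $\mu_t=\mu_{q(t),s(t)}$. Then items 1--5 follow from the properties of equilibrium states.

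\textbf{Main obstacle.} The lift must remain analytic when $\gamma$ touches the maximal-entropy boundary $s=0$, that is, when $\hat h$ or $\tilde h$ equals the maximal entropy. There, $\partial_s h=0$ at $s=0$, since entropy has a critical point in $s$ along the level set. We would first try to avoid this by reparametrizing with $s=\sigma^2$, or by allowing $s<0$: then $h$ is even-like and still maximal at $s=0$, and the curve touches the boundary only at endpoints. If a clean lift fails, we would instead pick $h(t)$ strictly below the top on $(0,1)$ and handle the endpoints by a one-sided analytic continuation in $s$ across $0$.
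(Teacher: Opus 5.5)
Your construction works when $\hat h<h_{top}(f,K_{\hat\alpha})$ and $\tilde h<h_{top}(f,K_{\tilde\alpha})$. In that case $s(t)>0$ on a neighborhood of $[0,1]$, and you even get affine entropy. It breaks down in the equality case, which the statement explicitly allows, and the repairs you sketch cannot fix it.

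Write $P(q,s)=P_X(\langle q,\Phi\rangle+s\phi)$ and $w=(q,s)$. Then $h_{\mu_w}=P-w\cdot\nabla P$, so $DF=\begin{pmatrix}I_d&0\\-q^{T}&-s\end{pmatrix}D^2P$ and $\det DF=-s\det D^2P$. Along a fixed-$\alpha$ curve, $\frac{d}{ds}h_{\mu_{q(s),s}}=-s\,\frac{d}{ds}\int\phi\,d\mu_{q(s),s}$, so the entropy deficit below $h_{top}(f,K_\alpha)$ is of order $s^2$.

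Your affine target, however, meets the top only to first order. Suppose $\hat h=h_{top}(f,K_{\hat\alpha})$. Then $g(t)=h_{top}(f,K_{\alpha(t)})-h(t)$ is concave (strictly if $\hat\alpha\neq\tilde\alpha$) with $g(1)=0\le g(0)$. Hence $g'(1)<0$, except in the trivial case $\hat\alpha=\tilde\alpha$, $\hat h=\tilde h$. Consequently $s(t)$ behaves like $c\sqrt{1-t}$, and $t\mapsto\int\phi\,d\mu_t$ has a square-root branch point at $t=1$. So item 1 fails for the Lipschitz observable $\phi$ itself. Substituting $s=\sigma^2$ or allowing $s<0$ does not change this order of contact, and no analytic continuation passes through a branch point.

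The obstruction is intrinsic to the affine profile. If $(q(t),s(t))$ extended analytically past $t=1$, the measures $\mu_t$ for $t$ slightly larger than $1$ would satisfy $\mathcal P(\mu_t)=\alpha(t)$ and $h_{\mu_t}=h(t)>h_{top}(f,K_{\alpha(t)})$. That contradicts the conditional variational principle. At a maximal-entropy endpoint, the entropy along such a path must be tangent to $t\mapsto h_{top}(f,K_{\alpha(t)})$.

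The missing idea is to lift a straight segment in the coordinates $\beta=(\mathcal P(\mu),\int\phi\,d\mu)$ instead of $(\mathcal P(\mu),h_\mu)$. Your family $\mu_{q,s}$ is essentially the family the paper uses, with $\varphi_{d+1}$ in place of $\phi$. The map $(q,s)\mapsto\nabla P(q,s)$ has no fold: it is an analytic diffeomorphism onto the interior of the convex lifted rotation set. A maximal-entropy endpoint is simply an interior point with $s=0$, where the lifted spectrum $\mathcal H(\beta)=\max\{h_\mu:\mathcal P^{new}(\mu)=\beta\}$ has a critical point in the last variable.

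Concretely, the paper proceeds as follows.
\begin{itemize}
\item It takes $\varphi_{d+1}$ with values in $[-1,1]$ such that $\{\varphi_{d+1}=\pm1\}$ are finite unions of periodic orbits carrying measures with averages $\hat\alpha$ and $\tilde\alpha$.
\item Lemma \ref{lem:interior-lifting} places $(\hat\alpha,a)$ and $(\tilde\alpha,a)$ in the interior for every $a\in(-1,1)$.
\item The lifted spectrum vanishes at $a=\pm1$, and its maximum over $a$ is $h_{top}(f,K_{\hat\alpha})$ (resp. $h_{top}(f,K_{\tilde\alpha})$). So the intermediate value theorem gives interior points $\hat\beta,\tilde\beta$ with $\mathcal H(\hat\beta)=\hat h$ and $\mathcal H(\tilde\beta)=\tilde h$, the equality case included.
\item It sets $\mu_t$ to be the equilibrium state realizing $\mathcal H(\beta(t))$, where $\beta(t)=(1-t)\tilde\beta+t\hat\beta$.
\end{itemize}
The entropy is then analytic but not affine. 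The bound $\inf_t h_{\mu_t}=\min\{\hat h,\tilde h\}$ follows from concavity: $\mathcal H(\beta(t))\ge h_{(1-t)\mu_0+t\mu_1}(f)=(1-t)\tilde h+t\hat h$.

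Your $\phi=-\mathrm{dist}(\cdot,O)$ can play the role of $\varphi_{d+1}$, with one correction. $O$ must carry measures with both averages $\hat\alpha$ and $\tilde\alpha$, so that a zero-entropy ground state exists for every $\alpha(t)$ on the segment. Contrary to your claim, the image of $F$ need not contain $\{(\alpha,h):0<h\le h_{top}(f,K_\alpha)\}$ for every interior $\alpha$.
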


Besides entropy, our method also yields a dimension counterpart in the average conformal setting, where conformality is needed to identify the Hausdorff dimension of measures along the path.
We say that a hyperbolic set $X$ is average conformal if, for every
$\mu\in \mathcal{M}_f^e(X)$, all Lyapunov exponents of $\mu$ along
the stable direction coincide and all Lyapunov exponents of $\mu$
along the unstable direction coincide. Equivalently, every
$\mu\in \mathcal{M}_f^e(X)$ has exactly two distinct Lyapunov
exponents, one negative and one positive.
For any $\alpha\in\mathcal{P}(\mathcal{M}_f(X)),$ denote 
$$\dim_H (\alpha):=\sup\{\dim_H \mu: \mu\in\mathcal{M}_f^e(X) \text{ and } \mathcal{P}(\mu)=\alpha\}.$$
\begin{theoremalph}\label{thm-huasdorff}
	Let $X$ be a locally maximal average conformal hyperbolic set of a $C^{1+\varepsilon}$ diffeomorphism $f$.
	Assume that $(X,f)$ is topologically mixing
	(resp. topologically transitive). Given $d\in\mathbb{N}^+$,
	$0<\gamma\leq 1$, and
	$\Phi\in C^\gamma(X)^d$,
	then for any
	$
	\hat{\alpha},\tilde{\alpha}
	\in \operatorname{Int}\mathcal{P}(\mathcal{M}_f(X)),
	$
	and any
	$
	0<\hat h<\dim_H(\hat\alpha),
	~
	0<\tilde h<\dim_H(\tilde\alpha),
	$
	there is a path
	$$
	\{\mu_t\}_{t\in[0,1]}\subset \mathcal{M}_f^B(X)
	\qquad
	\left(
	\mathrm{resp.}\ 
	\{\mu_t\}_{t\in[0,1]}\subset \mathcal{M}_f^e(X)
	\right)
	$$
	such that
	\begin{enumerate}
		\item the map $t\mapsto \int \psi d\mu_t$ is  analytic for any H\"older continuous function $\psi$;
		
		\item the map
		$
		t\mapsto \dim_H\mu_t
		$
		is analytic; 
		
		\item
		$
		(\mathcal{P}(\mu_0),\dim_H\mu_0)
		=
		(\tilde{\alpha},\tilde h),
		~
		(\mathcal{P}(\mu_1),\dim_H\mu_1)
		=
		(\hat{\alpha},\hat h);
		$
		
		\item for every $1\leq i\leq d$ and every $t\in[0,1]$,
		$
		\mathcal{P}_i(\mu_t)
		=
		t\hat{\alpha}_i+(1-t)\tilde{\alpha}_i.
		$
		In particular, if $\hat{\alpha}=\tilde{\alpha}=\alpha$, then
		$
		\mathcal{P}(\mu_t)=\alpha
		$
		for every $t\in[0,1]$;
		
		\item $S_{\mu_t}=X$ for every $t\in[0,1]$.
	\end{enumerate}
\end{theoremalph}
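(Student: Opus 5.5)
We would prove Theorem \ref{thm-huasdorff} by reducing it to the entropy statement of Theorem \ref{Thm-int}, applied to an enlarged observable. The reduction rests on the dimension formula for ergodic measures on average conformal hyperbolic sets:
\[
\dim_H\mu=\frac{h_\mu(f)}{\lambda^u(\mu)}+\frac{h_\mu(f)}{-\lambda^s(\mu)},
\qquad
\lambda^u(\mu)=\int \varphi^u\,d\mu,\quad \lambda^s(\mu)=\int\varphi^s\,d\mu .
\]
Here we take $\varphi^u=\log|\det Df|_{E^u}|/\dim E^u$ and $\varphi^s=\log|\det Df|_{E^s}|/\dim E^s$. Both are H\"older because $f$ is $C^{1+\varepsilon}$ and the splitting is H\"older. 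The key feature is that, once the exponents are known, $\dim_H\mu$ is an explicit function of $h_\mu(f)$ and two Birkhoff integrals.

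Set $\Psi=(\Phi,\varphi^u,\varphi^s)\in C^{\gamma'}(X)^{d+2}$. For the endpoint $\hat\alpha$, choose an ergodic $\hat\nu$ with $\mathcal P(\hat\nu)=\hat\alpha$ and $\dim_H\hat\nu>\hat h$. By the interior/approximation arguments in the proof of Theorem \ref{Thm-int}, we may take $\hat\nu$ with $\Psi$-value $\hat\beta=(\hat\alpha,\hat\lambda^u,\hat\lambda^s)$ lying in $\operatorname{Int}\mathcal P_\Psi(\mathcal M_f(X))$. If the extended image is degenerate, for instance when $\varphi^u$ is cohomologous to a combination of the $\varphi_i$ and a constant, we first drop the redundant coordinates. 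Next set
\[
\hat h_0=\hat h\Big(\frac1{\hat\lambda^u}+\frac1{-\hat\lambda^s}\Big)^{-1}.
\]
Since $h_{\hat\nu}>\hat h_0$, the entropy-level results used for Theorem \ref{Thm-int} give $0<\hat h_0<h_{top}(f,K_{\hat\beta}(\Psi))$. We do the same at $\tilde\alpha$, obtaining $\tilde\beta$ and $\tilde h_0$.

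Apply Theorem \ref{Thm-int} to $\Psi$, the points $\hat\beta,\tilde\beta$, and the entropies $\hat h_0,\tilde h_0$. This yields Bernoulli (resp. ergodic) measures $\mu_t$ with full support, analytic H\"older integrals, and $\Psi$-values moving affinely from $\tilde\beta$ to $\hat\beta$. In particular $\mathcal P_i(\mu_t)=t\hat\alpha_i+(1-t)\tilde\alpha_i$, which is item 4. Along the path $\lambda^u(\mu_t)$ is affine and positive and $\lambda^s(\mu_t)$ is affine and negative, while $h_{\mu_t}$ is analytic. Therefore
\[
\dim_H\mu_t=h_{\mu_t}\Big(\frac1{\lambda^u(\mu_t)}-\frac1{\lambda^s(\mu_t)}\Big)
\]
is analytic on $[0,1]$, which is item 2. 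At the endpoints it equals $\hat h$ and $\tilde h$ by the choice of $\hat h_0,\tilde h_0$, which is item 3. Items 1 and 5 are inherited directly from Theorem \ref{Thm-int}.

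The main obstacle is placing $\hat\beta$ in the interior of the extended rotation set with enough entropy above $\hat h_0$. Our first attempt would use the standard fact that ergodic measures with $\mathcal P=\hat\alpha$ and dimension close to $\dim_H(\hat\alpha)$ can be perturbed into the relative interior of the fiber. This can be done by mixing with nearby measures and re-ergodizing via horseshoes, at the cost of an arbitrarily small loss of dimension. The strict inequality $\hat h<\dim_H(\hat\alpha)$ leaves exactly this margin.
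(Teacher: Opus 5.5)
Your proposal is correct, but it handles the endpoints differently from the paper. The paper first constructs an ergodic $\hat\mu$ with $\mathcal P(\hat\mu)=\hat\alpha$, $\dim_H\hat\mu=\hat h$ \emph{exactly}, and $(\int\psi^u d\hat\mu,\int\psi^s d\hat\mu,h_{\hat\mu}(f))$ in the relative interior of the convex fibre $D_{\hat\alpha}$. For this it imports two results of \cite{DHT}: Lemma 7.4, that every value in $(0,\dim_H(\alpha))$ is the dimension of an ergodic measure over $\alpha$, and Lemma 7.2, that every relative-interior point of $D_\alpha$ is realized by an ergodic measure. It then uses $h_{\hat\mu}(f)$ itself as the endpoint entropy, checks admissibility with Bowen's formula for generic points, and applies Proposition~\ref{prop:relint-D} to $(\Phi,\psi^u,\psi^s)$, which is your $\Psi$.

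You instead freeze the exponent pair of a measure whose dimension expression exceeds $\hat h$, and lower the entropy target to $\hat h_0$. The exact endpoint value $\hat h$ is then produced by the entropy prescription of Theorem~\ref{Thm-int} in its relative-interior form. Your ``drop the redundant coordinates'' is exactly Proposition~\ref{prop:relint-D}, and it is genuinely needed, e.g.\ for linear horseshoes where $\varphi^u,\varphi^s$ are constant. This route removes the need for the realization lemmas of \cite{DHT}.

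It also makes your ``main obstacle'' easier than you suggest, because $\hat\nu$ need not be ergodic. The bound $h_{\mathrm{top}}(f,K_{\hat\beta})\geq h_{\hat\nu}(f)$ is the conditional variational principle (Lemma~\ref{lemma-analytic}(2) in reduced coordinates), which only needs $\hat\nu$ invariant. The dimension formula is applied only to ergodic measures. Concretely:
\begin{enumerate}
\item take an ergodic $\mu$ over $\hat\alpha$ with $\dim_H\mu>\hat h$, which exists by the definition of $\dim_H(\hat\alpha)$;
\item take an invariant $\omega$ over $\hat\alpha$ whose exponent pair lies in the relative interior of the fibre of $\mathcal P_\Psi(\mathcal M_f(X))$ over $\hat\alpha$;
\item set $\hat\nu=(1-s)\mu+s\omega$ with small $s>0$.
\end{enumerate}
Then $\mathcal P(\hat\nu)=\hat\alpha$, and the exponent pair of $\hat\nu$ lies in that relative interior. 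Hence $\hat\beta$ lies in the relative interior of $\mathcal P_\Psi(\mathcal M_f(X))$, by the fibre description of relative interiors recalled in Section~\ref{sec-hyper-dimen}. Finally, $h_{\hat\nu}(f)(1/\hat\lambda^u-1/\hat\lambda^s)>\hat h$ for small $s$ by continuity, since entropy and both exponents are affine in $s$.

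This convexity step, not an argument from the proof of Theorem~\ref{Thm-int}, is what justifies your choice of $\hat\beta$. You should drop ``re-ergodizing via horseshoes'': done honestly, it would require hitting $\mathcal P=\hat\alpha$ exactly, which is the hard content of \cite{DHT}.
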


We also establish a corresponding result for partially hyperbolic diffeomorphisms with one-dimensional center. 
Let $M$ be a compact Riemannian manifold without boundary. A diffeomorphism $f:M\to M$ is partially hyperbolic, if there exists a $D f$-invariant dominated splitting
$
T M=E^{\mathrm{ss}} \oplus E^{\mathrm{c}} \oplus E^{\mathrm{uu}}
$
such that $E^{\text {ss }}$ (resp. $E^{\text {uu }}$ ) is uniformly contracting (resp. expanding). For any $\varepsilon>0$, denote by $\operatorname{Diff }^{1+\varepsilon}(M)$ the space of $C^{1+\varepsilon}$ diffeomorphisms on $M$,  and by $\mathrm{PH}_{\mathrm{c}=1}^{1+\varepsilon}(M)$ the set of $C^{1+\varepsilon}$ partially hyperbolic diffeomorphisms on $M$ with a one-dimensional center bundle $E^{\mathrm{c}}$.
Let $\varepsilon>0$ and $f\in \mathrm{PH}_{\mathrm{c}=1}^{1+\varepsilon}(M)$. For an ergodic measure $\mu\in \mathcal{M}_f^e(M)$, we denote by
$$
\chi^c(\mu):=\int \log \|Df|_{E^c}\|\, d\mu
$$
its center Lyapunov exponent, and write
$$
\mathcal{M}_{\mathrm{erg},\chi}(f):=\{\mu\in \mathcal{M}_f^e(M):\chi^c(\mu)=\chi\}.
$$
Set
$$
\chi_{\min}
:=
\inf\{\chi^c(\mu):\mu\in\mathcal M_f^e(M)\},
\qquad
\chi_{\max}
:=
\sup\{\chi^c(\mu):\mu\in\mathcal M_f^e(M)\}.
$$
For $\chi\in (\chi_{\min},\chi_{\max})$, define
$$
H(\chi):=\sup\{h_\mu(f):\mu\in \mathcal{M}_{\mathrm{erg},\chi}(f)\}.
$$
For the partially hyperbolic systems with blender-horseshoes and minimal foliations considered in \cite{DGMZ2025} where many examples are discussed, we obtain analytic paths of ergodic measures with prescribed center Lyapunov exponents and entropies.
\begin{theoremalph}\label{thm:same-sign-path}
	Let $f\in \mathrm{PH}_{\mathrm{c}=1}^{1+\varepsilon}(M)$ satisfy:
	\begin{enumerate}
		\item $f$ has a stable
		 and  an unstable blender-horseshoe;
		\item the foliations $W^{\mathrm{ss}}$ and $W^{\mathrm{uu}}$ are both minimal.
	\end{enumerate}
	Let
	$
	\hat{\chi},\tilde{\chi}\in (\chi_{\min},\chi_{\max})$ with $\hat{\chi}\tilde{\chi}>0,
	$
	and let
	$
	0<\hat{h}<H(\hat{\chi}),$ $0<\tilde{h}<H(\tilde{\chi}).
	$
	Then there exists a path
	$$
	\{\mu_t\}_{t\in [0,1]}\subset \mathcal{M}_f^e(M)
	$$
	such that
	\begin{enumerate}
		\item the map
		$
		t\mapsto \int \psi\, d\mu_t
		$
		is analytic for any H\"older continuous function $\psi:M\to \mathbb{R}$;
		\item the map
		$
		t\mapsto h_{\mu_t}(f)
		$
		is analytic and
		$
		\inf_{t\in [0,1]} h_{\mu_t}(f)=\min\{\hat{h},\tilde{h}\};
		$
		\item
		$
		(\chi^c(\mu_0),h_{\mu_0}(f))=(\tilde{\chi},\tilde{h}),
		$ and
		$
		(\chi^c(\mu_1),h_{\mu_1}(f))=(\hat{\chi},\hat{h});
		$
		\item
		$
		\chi^c(\mu_t)=t\hat{\chi}+(1-t)\tilde{\chi} 
		$ for any $t\in[0,1]$.  In particular, if $\hat{\chi}=\tilde{\chi},$ then 	$\{\mu_t\}_{t\in [0,1]}\subset \mathcal{M}_{\mathrm{erg},\hat{\chi}}(f).$
	\end{enumerate}
\end{theoremalph}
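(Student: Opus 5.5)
Theorem \ref{thm:same-sign-path} reduces to a uniformly hyperbolic statement of the type in Theorem \ref{Thm-int}, applied inside a suitable horseshoe. First I use the blender-horseshoe and minimality hypotheses, as in \cite{DGMZ2025} and the multi-horseshoe techniques of \cite{DHT,HouTian2023}, to produce a single hyperbolic object. This is a locally maximal, topologically mixing hyperbolic set $\Lambda\subset M$ for some iterate, or a hyperbolic set conjugate to a mixing subshift of finite type. It has constant index, and the center direction inside it is uniformly expanding if $\hat\chi,\tilde\chi>0$, and uniformly contracting if both are negative. The same-sign hypothesis $\hat\chi\tilde\chi>0$ is exactly what lets both target exponents be realized in one such set of fixed index.

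The set $\Lambda$ must satisfy three requirements:
\begin{enumerate}
\item $\hat\chi$ and $\tilde\chi$ lie in the interior of $\{\int\log\|Df|_{E^c}\|\,d\mu:\mu\in\mathcal M_{f|_\Lambda}(\Lambda)\}$;
\item $h_{top}(f|_\Lambda,K_{\hat\chi})>\hat h$;
\item $h_{top}(f|_\Lambda,K_{\tilde\chi})>\tilde h$.
\end{enumerate}
To get these, I pick ergodic measures $\hat\nu,\tilde\nu$ with $\chi^c=\hat\chi,\tilde\chi$ and entropies exceeding $\hat h,\tilde h$; this is possible since $\hat h<H(\hat\chi)$ and $\tilde h<H(\tilde\chi)$. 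I add auxiliary ergodic measures with center exponents slightly above and below these values, of the same sign. Katok-type horseshoe approximation, available because these measures are hyperbolic with the same index, gives horseshoes carrying nearly the same exponents and entropies. The blenders together with minimality of $W^{ss}$ and $W^{uu}$ give heteroclinic connections between all these horseshoes. This lets me glue them into one transitive, and after passing to a mixing component, mixing, hyperbolic set $\Lambda$ containing all of them. Then requirement 1 holds because exponents on both sides are present, and requirements 2 and 3 hold by the variational principle for the restricted level sets.

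Next I apply Theorem \ref{Thm-int} to $(\Lambda,f|_\Lambda)$ with $d=1$ and $\varphi_1=\log\|Df|_{E^c}\|$. The bundle $E^c$ is H\"older continuous along the hyperbolic set since $f$ is $C^{1+\varepsilon}$, so $\varphi_1$ is H\"older on $\Lambda$. The theorem gives a path $\{\mu_t\}\subset\mathcal M^e_f(\Lambda)\subset\mathcal M^e_f(M)$ with:
\begin{enumerate}
\item $\int\varphi_1\,d\mu_t=t\hat\chi+(1-t)\tilde\chi$;
\item analytic entropy with the prescribed endpoint values and infimum $\min\{\hat h,\tilde h\}$;
\item analytic $t\mapsto\int\psi\,d\mu_t$ for every H\"older $\psi$ on $\Lambda$, which includes restrictions of H\"older functions on $M$.
\end{enumerate}
Since $E^c$ is one-dimensional, $\chi^c(\mu_t)=\int\varphi_1\,d\mu_t$, and this gives items 1 through 4 of the theorem.

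If $\Lambda$ is hyperbolic only for an iterate $f^N$, or is a disjoint union of $N$ cyclically permuted pieces, I apply Theorem \ref{Thm-int} to $f^N$ on one piece. I then average the resulting measures over the orbit, $\mu_t=\frac1N\sum_{j<N}f^j_*\nu_t$. This preserves ergodicity for $f$ and analyticity, and it rescales exponents and entropies by $1/N$; I compensate by prescribing $N\hat\chi$, $N\hat h$, and so on.

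The main obstacle is the gluing step: building one mixing hyperbolic set that simultaneously straddles both target exponents with room on each side and carries entropy strictly above $\hat h$ and $\tilde h$ on the respective level sets. Here the blender-horseshoes and minimal strong foliations are essential. I would follow the construction of \cite{DGMZ2025}, which connects horseshoes of the same center sign via blenders, and add the entropy bookkeeping as in \cite{DHT}.
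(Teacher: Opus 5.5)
Your outline is essentially the paper's proof. You approximate by center-expanding horseshoes and glue them into one transitive locally maximal hyperbolic set $\Lambda$, using the homoclinic relations supplied by the blender-horseshoes and minimal strong foliations. You then check that $\hat\chi,\tilde\chi$ are interior exponents on $\Lambda$ with level-set entropy above $\hat h,\tilde h$, and apply Theorem~\ref{Thm-int} to the H\"older function $\log\|Df|_{E^c}\|$ on $\Lambda$. (Theorem~\ref{Thm-int} already covers the transitive case, so your iterate-and-average step only repeats what its proof does internally.)

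The one step you leave implicit is that the high-entropy measures on the horseshoe approximating $\hat\nu$ have exponent only \emph{near} $\hat\chi$. To get an invariant measure on $\Lambda$ with exponent exactly $\hat\chi$ and entropy $>\hat h$, you must mix in a flanking auxiliary measure. You must also take the approximation error small compared with the exponent gap, so that the correcting weight costs little entropy. The paper avoids this bookkeeping by using the continuity of $H$ on $(0,\chi_{\max})$ from \cite{DiazGelfertZhang2024}. That continuity also guarantees that positive-exponent measures at $\hat\chi^\pm,\tilde\chi^\pm$ exist, and lets the paper choose them with entropy already above $\hat h,\tilde h$, so a plain convex combination works.
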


Theorem \ref{thm:same-sign-path} is closely related to the conditional realization results of \cite{DHT,HouTian2023}, where multi-horseshoe methods are used to realize entropy, dimension, Birkhoff average, and Lyapunov exponent constraints in hyperbolic, singular hyperbolic, and certain nonhyperbolic settings. Building on this direction, \cite{DGMZ2025} treats $C^1$ partially hyperbolic diffeomorphisms with one-dimensional center under blender-horseshoe and minimal foliation assumptions, proves a restricted variational principle for center Lyapunov level sets, and constructs continuous paths of ergodic measures inside fixed center-exponent classes. Its proof combines cascades of horseshoes and Feldman-Katok convergence tools. Theorem \ref{thm:same-sign-path} is different in nature: under the stronger $C^{1+\varepsilon}$ regularity assumption, for two prescribed same-sign center exponents and admissible endpoint entropies, it produces an analytic path of ergodic measures whose center exponent moves linearly between the two endpoint values, whose entropy varies analytically, and whose endpoint exponent--entropy pairs are realized exactly.

\subsection*{Methods of proof}

The proofs are based on thermodynamic formalism together with a lifting argument that transforms the prescribed values problem into a higher dimensional parameter problem. In the uniformly hyperbolic settings, besides the original functions, we introduce an auxiliary coordinate and pass from $\Phi=(\varphi_1,\ldots,\varphi_d)$ to an enlarged family $\Phi^{new}=(\varphi_1,\ldots,\varphi_d,\varphi_{d+1})$. The extra observable $\varphi_{d+1}$ is chosen so that the new boundary slices correspond to measures of zero entropy, while the maximal value along the new level recovers the entropy or dimension of the original level set. In this way, a prescribed entropy or dimension is realized by selecting suitable interior values of the new coordinate. Once the problem is lifted to the interior of this higher dimensional rotation set, analytic dependence of pressure and equilibrium states yields analytic families of ergodic measures, and hence analytic variation of averages, entropy, and, in the average conformal case, Hausdorff dimension.

In the partially hyperbolic setting, we combine this analytic pathwise mechanism with hyperbolic approximation and transition tools available in the blender-horseshoe setting. This places the result in the same framework as \cite{DGMZ2025}, while the conclusion obtained here is analytic and pathwise: the center Lyapunov exponent is prescribed along the whole path, and the entropy varies analytically with prescribed endpoint values.

Thus the novelty of the paper lies not only in realizing prescribed dynamical values, but in realizing them through analytically parametrized families of ergodic measures. The constructions turn entropy, dimension, and center Lyapunov exponent into controlled functions along a single path of measures, rather than values attached to unrelated measures. In this sense, the results refine the usual connectedness and flexibility viewpoints by producing analytic paths with quantitative control along the path and exact realization of the endpoint values.

\subsection*{Outline of the paper}

Section \ref{sec-Preliminaries} recalls the preliminaries used later. Section \ref{sec-hyper-entropy} proves the main analytic path result in uniformly hyperbolic systems, first in the $v$-dimensional form Theorem \ref{Thm-int-2}, and then derives Theorem \ref{Thm-int} as the entropy case. Section \ref{sec-hyper-dimen} applies this result, together with the dimension formula for average conformal systems, to prove Theorem \ref{thm-huasdorff}. Section \ref{sec-hyper-partial} treats the partially hyperbolic case: using the blender-horseshoe assumptions, it reduces the problem to a suitable hyperbolic set and proves Theorem \ref{thm:same-sign-path}.

\section{Preliminaries}\label{sec-Preliminaries}

\subsection{Bernoulli measures and periodic measures}
Let $Y=\{0,1,\ldots,k-1\}$ be a finite set,
$\mathcal{F}=2^Y$, and let
$
(p_0,p_1,\ldots,p_{k-1})
$
be a probability vector such that
$
p_i>0,~ 0\leq i\leq k-1,
$
and
$
\sum_{i=0}^{k-1}p_i=1.
$
Define a probability measure $\nu$ on $(Y,\mathcal{F})$ by
$
\nu(A)=\sum_{i\in A}p_i
$
for every $A\subset Y$. In particular,
$
\nu(\{i\})=p_i,~ 0\leq i\leq k-1.
$
Then $(Y,\mathcal{F},\nu)$ is a probability space.

Let
$
I=\mathbb{N}_0
$
or
$
I=\mathbb{Z},
$
where
$
\mathbb{N}_0=\{0,1,2,\ldots\}.
$
Write
$
(\Omega_I,\mathcal{B}_I,m_I)=\prod_{i\in I}(Y,\mathcal{F},\nu).
$
This product probability space is defined as follows. Let
$
\Omega_I=\prod_{i\in I}Y.
$
For every finite subset $\Lambda\subset I$ and every family
$
(a_i)_{i\in\Lambda}\in Y^\Lambda,
$
define the cylinder set
$
[a_i]_{i\in\Lambda}
=
\{(x_i)_{i\in I}\in\Omega_I: x_i=a_i,\ i\in\Lambda\}.
$
Let $\mathcal{S}_I$ be the semi-algebra consisting of all such cylinder
sets. Define an additive function
$
m_I:\mathcal{S}_I\to[0,1]
$
by
$
m_I([a_i]_{i\in\Lambda})=\prod_{i\in\Lambda}p_{a_i}.
$
The product $\sigma$-algebra is
$
\mathcal{B}_I=\sigma(\mathcal{S}_I).
$
By the standard extension theorem, the above set function extends uniquely
to a probability measure on $(\Omega_I,\mathcal{B}_I)$, still denoted by
$m_I$.

If $I=\mathbb{N}_0$, define
$
\sigma^+:\Omega_{\mathbb{N}_0}\to\Omega_{\mathbb{N}_0}
$
by
$
\sigma^+((y_n)_{n=0}^{+\infty})=(x_n)_{n=0}^{+\infty},
~
x_n=y_{n+1},~ n\geq 0.
$
The probability preserving system
$
(\Omega_{\mathbb{N}_0},\mathcal{B}_{\mathbb{N}_0},m_{\mathbb{N}_0},\sigma^+)
$
is called a one-sided Bernoulli shift.

If $I=\mathbb{Z}$, define
$
\sigma:\Omega_{\mathbb{Z}}\to\Omega_{\mathbb{Z}}
$
by
$
\sigma((y_n)_{n=-\infty}^{+\infty})=(x_n)_{n=-\infty}^{+\infty},
~
x_n=y_{n+1},~ n\in\mathbb{Z}.
$
The probability preserving system
$
(\Omega_{\mathbb{Z}},\mathcal{B}_{\mathbb{Z}},m_{\mathbb{Z}},\sigma)
$
is called a two-sided Bernoulli shift.

Two probability preserving systems
$
(X_i,\mathcal{B}_i,\mu_i,f_i),~ i=1,2,
$
where the maps $f_i$ are not necessarily invertible, are called
isomorphic if there exist $M_i\in\mathcal{B}_i$ satisfying the following
properties:
\begin{enumerate}
	\item[(i)] $\mu_i(M_i)=1$ and $f_i(M_i)\subset M_i$;
	\item[(ii)] there is a bimeasurable bijection $\pi:M_1\to M_2$ such that
	$\pi_*\mu_1=\mu_2$ and
	$
	\pi\circ f_1=f_2\circ\pi
	$
	on $M_1$.
\end{enumerate}

For a probability preserving endomorphism
$
(X,\mathcal{B},\mu,f),
$
we use the standard natural extension. More precisely, it is the
invertible probability preserving system
$
(\widetilde X,\widetilde{\mathcal B},\widetilde\mu,\widetilde f)
$
defined, up to measure theoretic isomorphism, as the inverse limit of
$
(X,\mathcal{B},\mu,f).
$
It can be realized as follows. Let
$
\widetilde X
=
\{(x_n)_{n\in\mathbb Z}\in X^{\mathbb Z}: f(x_n)=x_{n+1},\ n\in\mathbb Z\}.
$
Let
$
\widetilde f((x_n)_{n\in\mathbb Z})=(x_{n+1})_{n\in\mathbb Z}.
$
The $\sigma$-algebra $\widetilde{\mathcal B}$ is the restriction of the
product $\sigma$-algebra to $\widetilde X$, and $\widetilde\mu$ is the unique
$\widetilde f$-invariant probability measure such that
$
(\pi_n)_*\widetilde\mu=\mu
$
for every $n\in\mathbb Z$, where
$
\pi_n((x_j)_{j\in\mathbb Z})=x_n.
$
Then
$
\pi_0\circ\widetilde f=f\circ\pi_0.
$
See, for example, \cite[Theorem 1.7.1]{PU2010}.

\begin{definition}\label{def-Bernoulli}
	Let $(X,\mathcal{B},\mu,f)$ be a probability preserving system.
	We say that $\mu$ is a Bernoulli measure if the natural extension
	of $(X,\mathcal{B},\mu,f)$ is isomorphic to a two-sided Bernoulli shift.
	
	In particular, if $(X,\mathcal{B},\mu,f)$ itself is isomorphic to a one-sided
	Bernoulli shift, then $\mu$ is a Bernoulli measure, because the
	natural extension of a one-sided Bernoulli shift is the corresponding
	two-sided Bernoulli shift. If $f$ is invertible modulo $\mu$, this definition
	reduces to the usual requirement that $(X,\mathcal{B},\mu,f)$ be isomorphic
	to a two-sided Bernoulli shift.
	
	A probability measure $\mu$ is called a periodic measure if there is a
	periodic point $p\in X$ of period $n$ such that
	$
	\mu=\frac{1}{n}\sum_{j=0}^{n-1}\delta_{f^j p},
	$
	where $\delta_x$ is the Dirac measure at $x\in X$.
\end{definition}

\subsection{Basic classes of systems}\label{subsec-basic-classes}

We recall the three classes of systems considered in the main results. These classes will be denoted by $(S1)$, $(S2)$ and $(S3)$.

First, let $A=(A_{ij})_{0\leq i,j\leq k-1}$ be a zero-one matrix. 
The one-sided subshift of finite type defined by $A$ is the system 
$(\Sigma_A^+,\sigma_A)$, where
$$
\Sigma_A^+
=
\left\{
x=(x_n)_{n\geq 0}\in \{0,1,\ldots,k-1\}^{\mathbb{N}_0}:
A_{x_nx_{n+1}}=1,\ n\geq 0
\right\},
$$
and $\sigma_A:\Sigma_A^+\to \Sigma_A^+$ is the left shift defined by
$
(\sigma_A x)_n=x_{n+1},~ n\geq 0.
$
The two-sided subshift of finite type defined by $A$ is the system 
$(\Sigma_A,\sigma_A)$, where
$$
\Sigma_A
=
\left\{
x=(x_n)_{n\in\mathbb{Z}}\in \{0,1,\ldots,k-1\}^{\mathbb{Z}}:
A_{x_nx_{n+1}}=1,\ n\in\mathbb{Z}
\right\},
$$
and $\sigma_A:\Sigma_A\to \Sigma_A$ is the left shift defined by
$
(\sigma_A x)_n=x_{n+1},~ n\in\mathbb{Z}.
$
In this paper, a subshift of finite type may be either one-sided or two-sided. This is the class denoted by
\begin{enumerate}
	\item[(S1)] $f:X\to X$ is a subshift of finite type.
\end{enumerate}

Second, let $M$ be a compact Riemannian manifold and let $f:M\to M$ be a $C^{1+\varepsilon}$ diffeomorphism. A compact $f$-invariant set $X\subset M$ is called a hyperbolic set if there is a continuous $Df$-invariant splitting
$
T_XM=E^s\oplus E^u
$
and there exist constants $C>0$ and $0<\lambda<1$ such that, for every $x\in X$ and every $n\geq 0$,
$
\|Df^n_x v\|\leq C\lambda^n\|v\|,
~ v\in E_x^s,
$
and
$
\|Df^{-n}_x v\|\leq C\lambda^n\|v\|,
~ v\in E_x^u.
$
The hyperbolic set $X$ is called locally maximal if there exists an open neighborhood $U$ of $X$ such that
$
X=\bigcap_{n\in\mathbb{Z}}f^n(U).
$
This is the class denoted by
\begin{enumerate}
	\item[(S2)] $X$ is a locally maximal hyperbolic set of a $C^{1+\varepsilon}$ diffeomorphism $f$.
\end{enumerate}

Third, let $X$ be a compact Riemannian manifold and let $f:X\to X$ be a $C^{1+\varepsilon}$ map. We say that $X$ is a repeller  of a $C^{1+\varepsilon}$ map $f$ if $f^{-1}(X)=X$ and there exist constants $C>0$ and $\kappa>1$ such that
$
\|Df^n_x v\|\geq C\kappa^n\|v\|
$
for every $x\in X$, every $v\in T_xX$ and every $n\geq 1$. This is the class denoted by
\begin{enumerate}
	\item[(S3)] $X$ is a repeller  of a $C^{1+\varepsilon}$ map $f$.
\end{enumerate}

Finally, we recall the topological assumptions used below. A dynamical system $(X,f)$ is called topologically transitive if, for any two nonempty open sets $U,V\subset X$, there exists $n\geq 0$ such that
$
f^n(U)\cap V\neq\emptyset.
$
It is called topologically mixing if, for any two nonempty open sets $U,V\subset X$, there exists $N\geq 1$ such that
$
f^n(U)\cap V\neq\emptyset
$
for every $n\geq N$. In the cases $(S2)$ and $(S3)$, these topological properties are always understood for the restricted system $f|_X:X\to X$.

\subsection{Thermodynamic formalism and equilibrium measure}
We recall the notion of topological pressure and topological entropy. Since we need to deal with non-compact sets, we use the notion   introduced by Pesin and Pitskel as a Carathéodory characteristic. Suppose that  $(X,f)$ is  a dynamical system.  
Let $\mathcal{U}$ be a finite open cover of $X$. We denote by $\mathcal{W}_n(\mathcal{U})$ the collection of words $\mathbf{U}=\left(U_0, \ldots, U_n\right) \in \mathcal{U}^{n+1}$. For each $\mathbf{U} \in \mathcal{W}_n(\mathcal{U})$ we write $m(\mathbf{U})=n$, and define the open set
$$
X(\mathbf{U})=\left\{x \in X: f^k x \in U_k \text { for } k=0, \ldots, n\right\} .
$$
We say that the collection of words $\Gamma \subset \bigcup_{n \geq 1} \mathcal{W}_n(\mathcal{U})$ covers the set $Z \subset X$ provided that $\bigcup_{\mathrm{U} \in \mathrm{\Gamma}} X(\mathrm{U}) \supset Z$.
Consider a continuous function $\varphi: X \rightarrow \mathbb{R}$. Given $\mathbf{U} \in \mathcal{W}_n(\mathcal{U})$ we write
\begin{equation}\label{equ-CC}
	\varphi(\mathbf{U})= \begin{cases}\sup _{X(\mathbf{U})} \sum_{k=0}^{n-1} \varphi \circ f^k & \text { if } X(\mathbf{U}) \neq \emptyset, \\ -\infty & \text { if } X(\mathbf{U})=\emptyset .\end{cases}
\end{equation}
For each set $Z \subset X$ and each number $\alpha \in \mathbb{R}$ let

$$
M(Z, \alpha, \varphi, \mathcal{U})=\lim _{n \rightarrow \infty} \inf _{\Gamma} \sum_{\mathbf{U} \in \Gamma} \exp (-\alpha m(\mathbf{U})+\varphi(\mathbf{U})),
$$
where the infimum is taken over all collections of words $\Gamma \subset \bigcup_{k \geq n} \mathcal{W}_k(\mathcal{U})$ covering $Z$. Let diam $\mathcal{U}$ be the diameter of the cover $\mathcal{U}$. 
\begin{definition}
	The topological pressure of $\varphi$ on the set $Z$ (with respect to $f$ ) is defined by:
	$$
	P_Z(\varphi) \stackrel{\text { def }}{=}  \lim _{\operatorname{diam} \mathcal{U} \rightarrow 0} \inf \{\alpha: M(Z, \alpha, \varphi, \mathcal{U})=0\}.
	$$
	We call $h_{top}(f, Z)\stackrel{\text { def }}{=} P_Z(0)$ the Bowen topological entropy of $f$ on $Z$.
\end{definition}
We emphasize that $Z$ need not be compact nor $f$-invariant.
When $Z=X$ (and thus $Z$ is compact and $f$-invariant), the number $P_X(\varphi)$ coincides with the notion of topological pressure for compact sets introduced by Ruelle in the case of expansive maps, and by Walters in the general case \cite{Walters1982}. We refer to \cite{Barreira1996,Pesin1997} for references and further details.

\begin{definition}
	Given $\mu\in\mathcal{M}_f(X).$ Let  $\xi=\{A_1,  \cdots,  A_n\}$ be a finite partition of $X$,   define
	$H_\mu(\xi)=-\sum_{i=1}^n\mu(A_i)\log\mu(A_i).$  We denote by $\bigvee_{i=0}^{n-1}f^{-i}\xi$ the partition whose element is the set $\bigcap_{i=0}^{n-1}f^{-i}A_{j_i},  1\leq j_i\leq n$.   Then the limit
	$h_\mu(f,  \xi)=\lim\limits_{n\to\infty}\frac1n H_\mu\left(\bigvee_{i=0}^{n-1}f^{-i}\xi\right)$ exists
	and we define the metric entropy of $\mu$ as
	$$h_{\mu}(f):=\sup\left\{h_\mu(f,  \xi):\xi~\textrm{is a finite measurable partition of X}\right\}. $$
\end{definition}

According to the variational principle from \cite{Walters1975}, $$P_X(\varphi)=\sup\left\{h_\mu(f)+ \int \varphi d \mu:\mu\in \mathcal{M}_f(X)\right\}.$$
\begin{definition}
	A measure $\mu\in \mathcal{M}_f(X)$ is said to be an {equilibrium measure} associated with $\varphi$ if $P_X(\varphi)=h_\mu(f)+ \int \varphi d \mu.$
\end{definition}

\begin{lemma}\cite[Lemma 2.4]{Qiu2011}\label{lemma-qiu}
	Assume that the entropy map $\mu\mapsto h_\mu(f)$ is upper semicontinuous on $\mathcal{M}_f(X)$ and that $\varphi\in C(X)$. If $\varphi$ has a unique equilibrium measure $\mu_\varphi$, then 
	$$
	\int \psi d \mu_\varphi=\lim_{t\to 0}\frac{P_X(\varphi+t\psi)-P_X(\varphi)}{t}, \quad\forall \psi \in C(X).
	$$
\end{lemma}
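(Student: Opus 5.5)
The plan is to trap the difference quotient between two bounds coming from the variational principle, and then use uniqueness of the equilibrium measure to make the two bounds meet as $t\to 0$. Two standard facts are used throughout. First, $P_X$ is $1$-Lipschitz in the sup norm: $|P_X(\varphi+t\psi)-P_X(\varphi)|\le |t|\,\|\psi\|_\infty$. In particular $t\mapsto P_X(\varphi+t\psi)$ is continuous. Second, upper semicontinuity of $\mu\mapsto h_\mu(f)$ on the compact set $\mathcal{M}_f(X)$ means that $\mu\mapsto h_\mu(f)+\int(\varphi+t\psi)\,d\mu$ attains its supremum. So for every $t$ there is an equilibrium measure $\mu_t$ for $\varphi+t\psi$.

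\emph{Lower bound.} Test the variational principle for $\varphi+t\psi$ with $\mu_\varphi$:
$$P_X(\varphi+t\psi)\ge h_{\mu_\varphi}(f)+\int\varphi\,d\mu_\varphi+t\int\psi\,d\mu_\varphi=P_X(\varphi)+t\int\psi\,d\mu_\varphi.$$
Hence the difference quotient is $\ge \int\psi\,d\mu_\varphi$ for $t>0$ and $\le\int\psi\,d\mu_\varphi$ for $t<0$.

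\emph{Upper bound.} Test the variational principle for $\varphi$ with $\mu_t$:
$$P_X(\varphi+t\psi)=h_{\mu_t}(f)+\int\varphi\,d\mu_t+t\int\psi\,d\mu_t\le P_X(\varphi)+t\int\psi\,d\mu_t.$$
Hence the quotient is $\le\int\psi\,d\mu_t$ for $t>0$ and $\ge\int\psi\,d\mu_t$ for $t<0$. It therefore suffices to show that $\int\psi\,d\mu_t\to\int\psi\,d\mu_\varphi$ as $t\to0$.

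\emph{Convergence of $\mu_t$.} This is the key step and the only place where uniqueness is used. Take any sequence $t_n\to0$. By weak$^*$ compactness of $\mathcal{M}_f(X)$, some subsequence $\mu_{t_n}$ converges to a measure $\nu$. Upper semicontinuity of entropy, continuity of $\mu\mapsto\int\varphi\,d\mu$, and continuity of pressure give
$$h_\nu(f)+\int\varphi\,d\nu\ \ge\ \limsup_n\Big(P_X(\varphi+t_n\psi)-t_n\int\psi\,d\mu_{t_n}\Big)=P_X(\varphi).$$
The right-hand equality holds because $|t_n\int\psi\,d\mu_{t_n}|\le|t_n|\,\|\psi\|_\infty\to0$. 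Thus $\nu$ is an equilibrium measure for $\varphi$, so $\nu=\mu_\varphi$ by uniqueness. Since every sequence has a subsequence converging to $\mu_\varphi$, we get $\mu_t\to\mu_\varphi$ weak$^*$, and therefore $\int\psi\,d\mu_t\to\int\psi\,d\mu_\varphi$.

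Combining this with the two-sided bounds, both one-sided limits of the difference quotient equal $\int\psi\,d\mu_\varphi$, which proves the lemma. The main obstacle is this convergence step. It needs the existence of $\mu_t$, which comes from upper semicontinuity, and the upper semicontinuity inequality to pass the equilibrium property to the limit.
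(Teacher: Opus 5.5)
Your proof is correct and complete. The paper gives no proof of this lemma; it is quoted from \cite{Qiu2011}, so there is no internal argument to compare against. Your squeeze argument is the standard direct proof:
\begin{itemize}
\item testing the variational principle with $\mu_\varphi$ gives $P_X(\varphi+t\psi)-P_X(\varphi)\ge t\int\psi\,d\mu_\varphi$;
\item testing it with an equilibrium measure $\mu_t$ of $\varphi+t\psi$ gives $P_X(\varphi+t\psi)-P_X(\varphi)\le t\int\psi\,d\mu_t$;
\item upper semicontinuity together with uniqueness then forces $\mu_t\to\mu_\varphi$ weak$^*$.
\end{itemize}

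The other classical route goes through convex analysis. $P_X$ is convex and continuous on $C(X)$, so it is G\^ateaux differentiable at $\varphi$ exactly when it has a unique tangent functional there. Walters' characterization says that, when the entropy map is upper semicontinuous, the tangent functionals at $\varphi$ are precisely the equilibrium measures of $\varphi$. Uniqueness of $\mu_\varphi$ then gives differentiability with derivative $\psi\mapsto\int\psi\,d\mu_\varphi$. That route yields the whole G\^ateaux derivative at once and hides the limit argument inside the tangent-functional theorem. Yours is self-contained, using only the variational principle and weak$^*$ compactness. It also gives the extra information that every selection of equilibrium measures $\mu_t$ converges to $\mu_\varphi$.

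Two small remarks. First, the Lipschitz estimate for $P_X$ that you quote already follows from your two one-sided inequalities, which hold for all $t$ before dividing and give $|P_X(\varphi+t\psi)-P_X(\varphi)|\le|t|\,\|\psi\|_\infty$. Second, finiteness of $P_X$ is implicit throughout, as it is in the statement. This is automatic when the entropy map is real-valued and upper semicontinuous on the compact set $\mathcal M_f(X)$, since it is then bounded above and $h_{\mathrm{top}}(f)<\infty$.
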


\subsection{Hausdorff dimension and $v$-dimension} 

Let $X$ be a separable metric space. Given a set $Z \subset X$ and $\alpha>0$, we define
$
m(Z, \alpha)=\lim\limits_{\delta \rightarrow 0} \inf _{\mathcal{U}} \sum_{U \in \mathcal{U}}(\operatorname{diam} U)^\alpha
$
where the infimum is taken over all finite or countable cover $\mathcal{U}$ of $Z$ by sets of diameter at most $\delta$. There exists a unique value of $\alpha$ at which $m(Z, \alpha)$ jumps from $+\infty$ to $0$. This value is called Hausdorff dimension of $Z$ and is denoted by $\operatorname{dim}_H Z$. We have
$$
\operatorname{dim}_H Z=\inf \{\alpha: m(Z, \alpha)=0\} .
$$
For every Borel probability measure $\mu$ on $X$, we define the Hausdorff dimension of $\mu$ by
$$
\operatorname{dim}_H \mu=\inf \left\{\operatorname{dim}_H Z: \mu(Z)=1\right\} .
$$

We also recall a Carathéodory dimension characteristic introduced by Barreira and Schmeling in \cite{BS2000}. 
Suppose that  $(X,f)$ is  a dynamical system.  Let $\mathcal{U}$ be a finite open cover of $X$. Let now $v: X \rightarrow \mathbb{R}$ be a strictly positive continuous function. For each $\mathbf{U} \in \mathcal{W}_n(\mathcal{U})$ we define $v(\mathbf{U})$ as in (\ref{equ-CC}).
For each set $Z \subset X$ and each number $\alpha \in \mathbb{R}$, we define
$
N(Z, \alpha, v, \mathcal{U})=\lim _{n \rightarrow \infty} \inf _{\Gamma} \sum_{\mathbf{U} \in \Gamma} \exp (-\alpha v(\mathbf{U})),
$
where the infimum is taken over all collections of words $\Gamma \subset \bigcup_{k \geq n} \mathcal{W}_k(\mathcal{U})$ covering $Z$. Set
$
\operatorname{dim}_{v, \mathcal{U}} Z=\inf \{\alpha: N(Z, \alpha, v, \mathcal{U})=0\} .
$
One can easily show that the limit
$$
\operatorname{dim}_v Z \stackrel{\text { def }}{=} \lim _{\operatorname{diam} \mathcal{U} \rightarrow 0} \operatorname{dim}_{v, \mathcal{U}} Z
$$
exists, and we call it the $v$-dimension of $Z$ (with respect to $f$ ).
For every Borel probability measure $\mu$ on $X$, let
$
\operatorname{dim}_{v, \mathcal{U}} \mu=\inf \left\{\operatorname{dim}_{v, \mathcal{U}} Z: \mu(Z)=1\right\} .
$
The limit
$$
\operatorname{dim}_v \mu \stackrel{\text { def }}{=} \lim _{\operatorname{diam} \mathcal{U} \rightarrow 0} \operatorname{dim}_{v, \mathcal{U}} \mu
$$
exists, and is called the $v$-dimension of $\mu$.
\begin{lemma}\label{lemma-F}\cite{BS2000}
	Suppose that  $(X,f)$ is  a dynamical system.  Then for any $\mu\in\mathcal{M}^e_f(X),$ we have $\dim_v\mu=\frac{h_\mu(f)}{\int v d\mu}$.
\end{lemma}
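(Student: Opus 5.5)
The plan is to reduce the $v$-dimension of $\mu$ to the Carath\'eodory (Bowen) entropy of $\mu$, by showing that on a set of large $\mu$-measure the weight $v(\mathbf U)$ of a word is essentially $m(\mathbf U)\int v\,d\mu$. Write $\bar v=\int v\,d\mu>0$ and fix $\delta>0$. Two facts drive the argument.

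\emph{Uniform continuity.} If $\operatorname{diam}\mathcal U$ is small enough that $|v(x)-v(y)|<\delta$ whenever $x,y$ lie in a common element of $\mathcal U$, then for every word $\mathbf U$ with $m(\mathbf U)=n$ and every $x\in X(\mathbf U)$,
$$
\Bigl|v(\mathbf U)-\sum_{k=0}^{n-1}v(f^kx)\Bigr|\le n\delta .
$$

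\emph{Birkhoff's theorem and Egorov.} By ergodicity there are $N$ and a set $Z_N$ with $\mu(Z_N)>1-\delta$ such that $\bigl|\tfrac1n\sum_{k<n}v(f^kx)-\bar v\bigr|<\delta$ for all $x\in Z_N$ and all $n\ge N$.

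Combining the two, every word $\mathbf U$ of length $n\ge N$ with $X(\mathbf U)\cap Z_N\neq\emptyset$ satisfies $|v(\mathbf U)-n\bar v|\le 2n\delta$. Hence, for covers $\Gamma$ of subsets of $Z_N$ by such words,
$$
\exp\bigl(-\alpha v(\mathbf U)\bigr)=\exp\bigl(-\alpha\bar v\,m(\mathbf U)\bigr)\cdot e^{O(\alpha\delta\, m(\mathbf U))}.
$$
Comparing with the definition of $M(Z,\cdot,0,\mathcal U)$, this gives, for $Z\subset Z_N$,
$$
\frac{h_{\mathcal U}(Z)-c\delta}{\bar v+2\delta}\;\le\;\dim_{v,\mathcal U}Z\;\le\;\frac{h_{\mathcal U}(Z)+c\delta}{\bar v-2\delta},
$$
where $h_{\mathcal U}(Z)=\inf\{\beta: M(Z,\beta,0,\mathcal U)=0\}$ is the Carath\'eodory entropy at scale $\mathcal U$.

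The next step passes from subsets of $Z_N$ to full-measure sets. Any full-measure set can be written as the increasing union of its intersections with $Z_{N_j}$, where we choose $\mu(Z_{N_j})\to1$ and $\delta_j\to0$. Both $\dim_{v,\mathcal U}$ and $h_{\mathcal U}$ are countably stable (they are Carath\'eodory characteristics, so the dimension of a countable union is the supremum of the dimensions of the pieces). This stability is what allows us to pass between sets of full measure and sets of large measure. Taking the infimum over $Z$ with $\mu(Z)=1$ and then letting $\operatorname{diam}\mathcal U\to0$ and $\delta\to0$ yields
$$
\dim_v\mu=\frac{h_{\mu}^{C}}{\bar v},\qquad h_\mu^{C}:=\lim_{\operatorname{diam}\mathcal U\to0}\inf\{h_{\mathcal U}(Z):\mu(Z)=1\}.
$$
The sets $Z_N$ depend only on $\delta$ and not on $\mathcal U$, which is why the two limits can be taken in this order.

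The remaining step, and the main obstacle, is the identity $h_\mu^{C}=h_\mu(f)$ for ergodic $\mu$. This is the Bowen--Pesin result that the Carath\'eodory entropy of an ergodic measure equals its metric entropy (see \cite{Pesin1997}), and we would invoke it directly.

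If we were to prove it instead, the argument splits into two inequalities. For the upper bound $h_\mu^C\le h_\mu(f)$, take a partition $\xi$ subordinate to $\mathcal U$ with $\mu(\partial\xi)=0$. By Shannon--McMillan--Breiman, about $e^{n(h_\mu+\delta)}$ cylinders of length $n$ carry most of the mass, and each cylinder lies in a single $X(\mathbf U)$; summing over $n$ handles the limit in $n$. For the lower bound, use the Brin--Katok local entropy formula: $\mu(X(\mathbf U))\le e^{-n(h_\mu-\delta)}$ for words meeting a large-measure set. A mass-distribution (Frostman-type) argument then shows that any $\Gamma$ covering a set of positive measure has $\sum_{\mathbf U\in\Gamma} e^{-\beta m(\mathbf U)}$ bounded below whenever $\beta<h_\mu-\delta$.

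The delicate point in this lower bound is comparing the open-cover sets $X(\mathbf U)$ with Bowen balls. This works because $X(\mathbf U)$ is contained in a Bowen ball $B_n(x,\operatorname{diam}\mathcal U)$, while Brin--Katok is uniform only for large $n$ and small radius. The limits therefore have to be taken in the order $n\to\infty$ first, then $\operatorname{diam}\mathcal U\to0$, then $\delta\to0$.
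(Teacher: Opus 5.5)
The paper does not prove this lemma; it imports it directly from Barreira--Schmeling \cite{BS2000}, so there is no internal argument to compare against. Your proof is correct, and it is self-contained modulo the entropy identity you quote.

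\textbf{Reduction to Carath\'eodory entropy.} Fix a cover $\mathcal U$ whose elements have $v$-oscillation $<\delta$. Discard words that miss $Z$. Every remaining word of length $m(\mathbf U)\ge N$ meeting $Z\subset Z_N$ then satisfies $|v(\mathbf U)-m(\mathbf U)\bar v|\le 2\delta\, m(\mathbf U)$. Since both characteristics are monotone in their exponent, this sandwiches $N(Z,\alpha,v,\mathcal U)$ between $M(Z,\alpha(\bar v\pm 2\delta),0,\mathcal U)$, hence sandwiches $\dim_{v,\mathcal U}Z$ between $h_{\mathcal U}(Z)/(\bar v\pm 2\delta)$.

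At a fixed $\mathcal U$, both $N(\cdot,\alpha,v,\mathcal U)$ and $M(\cdot,\beta,0,\mathcal U)$ are outer measures. So $\dim_{v,\mathcal U}$ and $h_{\mathcal U}$ are countably stable, and the bounds pass to sets of full measure.

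This reduces the lemma to the identity $h^C_\mu=h_\mu(f)$ for ergodic $\mu$. That identity holds for any continuous map of a compact metric space, which matches the generality of the statement. Your sketch of it is the standard one: Shannon--McMillan--Breiman for the upper bound, and Brin--Katok with a mass-distribution argument, using $X(\mathbf U)\subset B_n(x,2\operatorname{diam}\mathcal U)$, for the lower bound.

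\textbf{Cosmetic fixes.} The set $\bigcup_j (Z\cap Z_{N_j})$ is only a full-measure subset of $Z$, not $Z$ itself. This is harmless in both directions, because $\dim_{v,\mathcal U}$ and $h_{\mathcal U}$ are monotone. You also need neither Egorov nor $\delta_j\to0$. For fixed $\delta$, Birkhoff's theorem alone shows that the sets
$$
Z_N=\Bigl\{x:\Bigl|\frac1n\sum_{k<n}v(f^kx)-\bar v\Bigr|<\delta\ \text{for all } n\ge N\Bigr\}
$$
increase to a set of full measure, and that is all the countable-stability step uses.

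\textbf{Comparison with the usual route.} A common alternative, and the way the $u$-dimension is usually handled in the Barreira--Schmeling framework, goes through Bowen's equation. One identifies $\dim_v\mu$ as the zero of $\alpha\mapsto P_\mu(-\alpha v)$, where $P_\mu$ is the measure-theoretic Carath\'eodory pressure. Pesin's formula $P_\mu(\varphi)=h_\mu(f)+\int\varphi\,d\mu$ for ergodic $\mu$ then gives $\alpha=h_\mu(f)/\int v\,d\mu$.

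Your argument bypasses the pressure formalism and needs only the entropy case of Pesin's theorem, at the cost of doing the Birkhoff uniformization by hand. In substance the two routes coincide: the same Birkhoff step is what makes Pesin's pressure formula, and the root characterization, work.
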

  
\subsection{Maps between Banach spaces}
Let $V, W$ be Banach vector spaces. Denote by $L_s^k(V, W)$ the space of symmetric $k$-linear maps from the $k$-fold product $V^k:=V \times \cdots \times V$ into $W$. Given $T_k \in L_s^k(V, W)$ and $(H, \cdots, H) \in V^k$ we write $T_k\left(H^k\right):=T_k(H, \cdots, H)$. 
\begin{definition}
	Let $U\subset V$ be open. A mapping $\Gamma:U\to W$ is said to be analytic at $u\in U$ if there exist $\varepsilon>0$ and maps
	$
	T_k=T_k(u)\in L_s^k(V,W)$ for any $k\ge 1,
	$
	such that $u+H\in U$ whenever $\|H\|_V<\varepsilon$, and
	$$
	\Gamma(u+H)=\Gamma(u)+\sum_{k=1}^{\infty}\frac{1}{k!}T_k(H^k)
	$$
	for every $H\in V$ with $\|H\|_V<\varepsilon$, where the series converges absolutely and uniformly on $B_V(0,\varepsilon)$. We say that $\Gamma$ is analytic in $U$ if it is analytic at every point $u\in U$.
\end{definition}

\begin{definition}
	Let $U\subset V$ be open and $u\in U$. We say that a mapping $\Gamma:U\to W$ is Fr\'echet differentiable at $u$ if there exists a bounded linear operator $A\in L(V,W)$ such that
	$$
	\lim_{\|H\|_V\to 0}\frac{\|\Gamma(u+H)-\Gamma(u)-A(H)\|_W}{\|H\|_V}=0.
	$$
	Such $A$ is unique, it is denoted by $D\Gamma(u)$ and called the Fr\'echet derivative of $\Gamma$ at $u$.
\end{definition}

\begin{definition}
	Let $U\subset V$ be open, $u\in U$, and $H\in V$. We say that $\Gamma:U\to W$ is G\^ateaux differentiable at $u$ in the direction $H$ if the limit
	$$
	d\Gamma(u;H):=\lim_{t\to 0}\frac{\Gamma(u+tH)-\Gamma(u)}{t}=\left.\frac{d}{d t} \Gamma(u+t H)\right|_{t=0}
	$$
	exists in $W$.  $d\Gamma(u;H)$ is called the G\^ateaux derivative of $\Gamma$ at $u$ in the direction $H$.
\end{definition}

We have the following properties.
\begin{proposition}\label{prop-frech}
	Let $\Gamma: U \subset V \rightarrow W.$
	\begin{enumerate}
		\item[(1)] If  $\Gamma$  is Fréchet differentiable at $u \in U,$ then for any $H,$  $\Gamma$  is G\^ateaux differentiable at $u \in U$ in the direction $H,$ and $d \Gamma(u; H)=D\Gamma(u)(H).$
		\item[(2)]  If $\Gamma$ is analytic on $U$, then $\Gamma$ is Fr\'echet differentiable in $U$ and the map
		$
		D\Gamma:U\to L(V,W)
		$
		is analytic.
	\end{enumerate}
\end{proposition}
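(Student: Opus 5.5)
Both parts of Proposition \ref{prop-frech} are standard facts from the calculus of Banach spaces; we only indicate the route.

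For (1), let $A=D\Gamma(u)$ and fix $H\neq 0$ (the case $H=0$ is trivial). For $t\neq 0$ small, apply the defining limit of the Fr\'echet derivative to the increment $tH$. This gives
$$\Big\|\frac{\Gamma(u+tH)-\Gamma(u)}{t}-A(H)\Big\|_W=\|H\|_V\cdot\frac{\|\Gamma(u+tH)-\Gamma(u)-A(tH)\|_W}{\|tH\|_V}\to 0 .$$
Hence the G\^ateaux derivative exists and $d\Gamma(u;H)=D\Gamma(u)(H)$.

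For (2), fix $u\in U$ and write the expansion $\Gamma(u+H)=\Gamma(u)+\sum_{k\ge1}\frac1{k!}T_k(H^k)$, valid for $\|H\|<\varepsilon$. The plan has three steps.

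\textbf{Step 1: Fr\'echet differentiability at $u$.} We show $D\Gamma(u)=T_1$. Absolute convergence on the ball gives Cauchy-type bounds: taking $r<\varepsilon$ and using the polarization inequality for symmetric $k$-linear maps, we get $\|T_k\|\le C\,k!\,(e/r)^k$. Hence the remainder $\sum_{k\ge2}\frac1{k!}T_k(H^k)$ is $O(\|H\|^2)$ for small $\|H\|$.

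\textbf{Step 2: re-expansion at a nearby point.} For $\|H_0\|$ small, expand $\Gamma(u+H_0+K)$ by applying the binomial formula to $T_k((H_0+K)^k)$. Rearranging the resulting absolutely convergent double series gives a power series in $K$ centred at $u+H_0$, with coefficients
$$S_j(H_0)=\sum_{k\ge j}\frac{j!}{k!}\binom{k}{j}\,T_k(H_0^{k-j},\cdot^{\,j}).$$
Thus $\Gamma$ is differentiable on a neighbourhood of $u$, with $D\Gamma(u+H_0)=S_1(H_0)=\sum_{k\ge1}\frac{1}{(k-1)!}T_k(H_0^{k-1},\cdot)$.

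\textbf{Step 3: analyticity of $D\Gamma$.} The formula for $S_1(H_0)$ is itself a power series in $H_0$ with values in $L(V,W)$. Its coefficients are the symmetric maps $(H^{k-1})\mapsto T_k(H^{k-1},\cdot)$, and it converges by the same Cauchy bounds on a smaller ball, so $D\Gamma$ is analytic at $u$.

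The main obstacle is controlling the norms of $T_k$ from absolute convergence of $\sum\frac1{k!}T_k(H^k)$, because this only bounds the diagonal values $T_k(H^k)$. Polarization costs a factor $k^k/k!\le e^k$, and this is absorbed by shrinking the radius to $\varepsilon/(2e)$. Once that is settled, the double-series rearrangement in Step 2 is routine. Alternatively one may cite standard references (e.g.\ Whittlesey's or Mujica's treatment of analytic maps on Banach spaces).
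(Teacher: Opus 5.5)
The paper states Proposition~\ref{prop-frech} as standard background without proof, so there is nothing to compare against. Your argument is the standard one and is correct. Part (1) is the usual specialization of the Fr\'echet limit to increments $tH$. In part (2) you combine Cauchy bounds with polarization, re-expand at $u+H_0$, and read off $D\Gamma(u+H_0)=\sum_{m\ge0}\frac{1}{m!}T_{m+1}(H_0^{m},\cdot)$ as an $L(V,W)$-valued power series with coefficients of norm at most $\|T_{m+1}\|$. That is exactly what the paper's definition of analyticity requires.

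One precision. The bound $\sup_{\|H\|\le1}\|T_k(H^k)\|_W\le C\,k!\,\varepsilon^{-k}$ should be attributed to the \emph{uniform} convergence clause in the paper's definition, because the terms $\frac1{k!}T_k(H^k)$ tend to $0$ uniformly on $B_V(0,\varepsilon)$. Pointwise absolute convergence alone, as your wording in Step~1 suggests, would need an extra Baire category argument to give a uniform bound. Also, polarization is not needed in Step~1, since the diagonal bound already gives the $O(\|H\|^2)$ remainder. It is needed only in Steps~2--3, where the off-diagonal values $T_k(H_0^{k-j},K^j)$ appear and the radius must shrink by the factor $e$.
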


\section{Analytic paths for prescribed averages and entropy}\label{sec-hyper-entropy}

In this section we prove Theorem \ref{Thm-int}. In fact, we establish a slightly stronger statement in terms of the $v$-dimension
from which the entropy version follows by taking $v\equiv 1$. Our strategy is to construct analytic families of equilibrium states whose quotient Birkhoff averages move in a controlled way, while the associated $v$-dimension varies analytically and realizes the prescribed endpoint values.

\subsection{Statement in the $v$-dimensional form}

To formulate the result, let $d\in\mathbb N$. Given
$$
\Phi=(\varphi_1,\ldots,\varphi_d)\in C(X)^d,
\qquad
\Psi=(\psi_1,\ldots,\psi_d)\in C(X)^d,
$$
we assume throughout that $\psi_i>0$ for every $1\le i\le d$.
Given $\alpha=\left(\alpha_1, \ldots, \alpha_d\right) \in \mathbb{R}^d$ we set
$$
K_\alpha=K_\alpha(\Phi, \Psi;f)=\bigcap_{i=1}^d\left\{x \in X: \lim _{n \rightarrow \infty} \frac{\varphi_{i, n}(x)}{\psi_{i, n}(x)}=\alpha_i\right\},
$$
where
$
\varphi_{i, n}(x)=\sum_{k=0}^{n-1} \varphi_i\left(f^k x\right)$ and $\psi_{i, n}(x)=\sum_{k=0}^{n-1} \psi_i\left(f^k x\right) .
$
Define a continuous function $\mathcal{P}=\mathcal{P}^{(\Phi, \Psi)}: \mathcal{M}_f(X) \rightarrow \mathbb{R}^d$ by
$$
\mathcal{P}(\mu)=\left(\frac{\int_X \varphi_1 \mathrm{~d} \mu}{\int_X \psi_1 \mathrm{~d} \mu}, \ldots, \frac{\int_X \varphi_d \mathrm{~d} \mu}{\int_X \psi_d \mathrm{~d} \mu}\right) .
$$
For any $1\leq i \leq d,$ denote $\mathcal{P}_i(\mu)=\frac{\int_X \varphi_i\mathrm{~d} \mu}{\int_X \psi_i \mathrm{~d} \mu}.$
Since $\mathcal{M}_f(X)$ is compact and connected, and $\mathcal{P}$ is continuous, the set $\mathcal{P}(\mathcal{M}_f(X))$ is also compact and connected.

We formulate the stronger $v$-dimensional version, the entropy statement is obtained by taking $v\equiv1$.
\begin{theoremalph}\label{Thm-int-2}
	Let $(X,f)$ be a dynamical system satisfying one of the conditions $(S1)$, $(S2)$ and $(S3)$.
	Assume that $(X,f)$ is topologically mixing
	(resp. topologically transitive). Given $d\in\mathbb N^+$, $0<\gamma\leq 1$,
	$v\in C^\gamma(X)$ with $v>0$, and
	$
	(\Phi,\Psi)\in C^\gamma(X)^d\times C^\gamma(X)^d,
	$
	$
	\psi_i>0,~ 1\leq i\leq d.
	$
	Then for any
	$
	\hat\alpha,\tilde\alpha
	\in
	\operatorname{Int}\mathcal P(\mathcal M_f(X)),
	$
	and any
	$
	0<\hat h\leq\dim_v K_{\hat\alpha},
	$ $
	0<\tilde h\leq\dim_v K_{\tilde\alpha},
	$
	there is a path
	$$
	\{\mu_t\}_{t\in[0,1]}\subset\mathcal M_f^B(X)
	\qquad
	\left(\hbox{resp. }
	\{\mu_t\}_{t\in[0,1]}\subset\mathcal M_f^e(X)\right)
	$$
	such that
	\begin{enumerate}
		\item $t\mapsto \int\psi\,d\mu_t$ is analytic for every H\"older continuous function $\psi$;
		
		\item $t\mapsto \dim_v\mu_t$ is analytic;
		
		\item
		$
		(\mathcal P(\mu_0),\dim_v\mu_0)
		=
		(\tilde\alpha,\tilde h),
		~
		(\mathcal P(\mu_1),\dim_v\mu_1)
		=
		(\hat\alpha,\hat h);
		$
		
		\item $t\mapsto\mathcal P(\mu_t)$ is analytic. Moreover, if
		$\hat\alpha=\tilde\alpha=\alpha$, then the path can be chosen so that
		$
		\mathcal P(\mu_t)=\alpha
		$
		for every $t\in[0,1];$
		
		\item $S_{\mu_t}=X$ for every $t\in[0,1]$.
	\end{enumerate}
	Moreover, in the special case
	$
	\psi_i\equiv1,
	~
	1\leq i\leq d,
	$
	the path can be chosen so that
	$
	\inf_{t\in[0,1]}\dim_v\mu_t
	=
	\min\{\hat h,\tilde h\},
	$
	and
	$
	\mathcal{P}_i(\mu_t)=t\hat{\alpha}_i+(1-t)\tilde{\alpha}_i
	$
	for every $1\leq i\leq d$ and every $t\in[0,1]$.
\end{theoremalph}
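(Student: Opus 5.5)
We plan to work throughout with equilibrium states of Hölder potentials and their thermodynamic parametrization. By Markov partitions, cases (S2) and (S3) reduce to (S1). In the transitive case we additionally pass to the spectral decomposition into mixing pieces of some iterate. For Hölder $\phi$ there is then a unique equilibrium state $\mu_\phi$, which is ergodic, fully supported, and Bernoulli in the mixing case. Moreover $q\mapsto P(\phi_q)$ is real analytic for any finite-dimensional analytic family $\phi_q$ of Hölder potentials, and by Lemma~\ref{lemma-qiu} its derivative in a direction $\psi$ equals $\int\psi\,d\mu_{\phi_q}$. It follows that $q\mapsto\int\psi\,d\mu_{\phi_q}$ is analytic for every Hölder $\psi$, which gives item~1 along any analytic curve $q(t)$ of parameters. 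By Lemma~\ref{lemma-F}, $\dim_v\mu=h_\mu/\int v\,d\mu$, and $h_{\mu_\phi}=P(\phi)-\int\phi\,d\mu_\phi$. Hence $\dim_v\mu_{\phi_q}$ and $\mathcal P(\mu_{\phi_q})$ are analytic in $q$, being quotients of analytic functions with positive denominators; this gives items~2 and~4 (analyticity) and item~5. The whole problem therefore reduces to choosing an analytic curve $t\mapsto q(t)$ in a parameter space with the prescribed endpoint values.

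To make the ratio constraints linear we use the potentials $\varphi_i-\alpha_i\psi_i$: the condition $\mathcal P_i(\mu)=\alpha_i$ is equivalent to $\int(\varphi_i-\alpha_i\psi_i)\,d\mu=0$. Following the lifting idea from the methods section, we consider the family
\[
\phi_{q,s}=\sum_{i=1}^d q_i(\varphi_i-\alpha_i\psi_i)-s\,v,\qquad q\in\mathbb R^d,\ s\in\mathbb R,
\]
where $v$ plays the role of the auxiliary coordinate. The measures of the form $\mu_{\phi_{q,s}}$ fill out the interior of the rotation set of $(\varphi_1,\dots,\varphi_d,v)$, up to adding a cohomologous-to-constant degeneracy check. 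Fix $\alpha\in\operatorname{Int}\mathcal P(\mathcal M_f(X))$. We then show that the set of values $\dim_v\mu$ over equilibrium states with $\mathcal P(\mu)=\alpha$ is an interval $(0,\dim_vK_\alpha]$. The top value $\dim_vK_\alpha$ is attained by the conditional variational principle of Barreira–Saussol–Schmeling: the maximizer is $\mu_{\phi_{q^*,s^*}}$, where $s^*=\dim_vK_\alpha$ and $q^*$ solves $P(\phi_{q,s^*})=0$ at a critical point. Sending the $v$-average toward its extremal value on the fiber forces the entropy to $0$. Within the fiber, the map from the parameter $(q,s)$ to $\dim_v$ is analytic, and the fiber is a connected analytic curve $\{(q,s):\nabla_q P(\phi_{q,s})=0\}$, parametrized via the implicit function theorem by the $v$-average. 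Since positive-definiteness of the Hessian of pressure holds away from coboundaries, the intermediate value theorem then gives, for each $0<h\le\dim_vK_\alpha$, a parameter realizing $(\alpha,h)$. This yields the endpoint parameters $\tilde p$ and $\hat p$ in a joint parameter space.

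The path itself is obtained as follows. The set of parameters $p$ such that $\mu_{\phi_p}$ has $\mathcal P$ in the interior and $\dim_v>0$ is a connected open analytic manifold: it is the preimage of the interior of the lifted rotation set under a diffeomorphism given by the gradient of pressure, i.e.\ the Legendre transform. Any two points in it can be joined by an analytic curve, for instance a polynomial curve perturbed to stay inside. In the special case $\psi_i\equiv1$ we work instead in the lifted coordinates $(\beta,w)=(\int\Phi\,d\mu,\int v\,d\mu)$. Here we take the straight segment $\beta(t)=t\hat\alpha+(1-t)\tilde\alpha$ and choose $w(t)$ analytic so that the entropy along the segment stays at least $\min\{\hat h,\tilde h\}$. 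This is possible because the fiberwise maximal entropy is concave in $\beta$, hence at least $\min\{\hat h,\tilde h\}$ along the segment. The segment is pulled back via the inverse of the analytic diffeomorphism $p\mapsto\nabla P$, and the infimum $\min\{\hat h,\tilde h\}$ is attained at one endpoint. For the equal-$\alpha$ case we move only in $w$ within one fiber.

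The main obstacle is the endpoint $\hat h=\dim_vK_{\hat\alpha}$, which is attained on the boundary of the lifted admissible region in $w$. One then needs both that this maximizer is an equilibrium state of a Hölder potential in the family, and that the curve reaching it is analytic up to and including the endpoint. We expect to handle this by noting that the maximum is an interior critical point of an analytic function (the conditional dimension spectrum), not a boundary point of the parameter domain. We will also have to rule out the degenerate case where $v$ is cohomologous to a combination of the $\varphi_i-\alpha_i\psi_i$ plus a constant; in that case $\dim_v$ is constant on the fiber, and a separate, trivial argument applies.
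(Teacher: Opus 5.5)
There is a genuine gap in the step meant to produce small endpoint values. You use $v$ itself as the lifting coordinate and assert that pushing $\int v\,d\mu$ to its extremal value on the fiber $\{\mathcal P(\mu)=\alpha\}$ forces the entropy to $0$. This is false in general. An equilibrium state of your family with $v$-average $w$ maximizes entropy among measures with $\mathcal P(\mu)=\alpha$ and $\int v\,d\mu=w$. So the values you realize are $H(\alpha,w)/w$ with $H(\alpha,\cdot)$ concave. By concavity, $H(\alpha,\cdot)$ is bounded below on the whole $w$-interval by the smaller of the maximal entropies of the two $v$-extremal slices of the fiber, and these can be positive.

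For example, take the full shift on $\{0,1,2\}$ with $d=1$, $\varphi_1=\mathbf 1_{[0]}$, $\psi_1\equiv1$, $v=1+\mathbf 1_{[0]}+\mathbf 1_{[1]}$ and $\alpha=1/3$. Both extremal slices $w=4/3$ and $w=2$ contain Bernoulli measures of entropy $\log3-\frac23\log2$. Hence your family never gives $\dim_v$ below $\frac12(\log3-\frac23\log2)$, whereas the theorem demands every $\hat h\in(0,\dim_vK_\alpha]$.

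Your degenerate case is also mishandled. If $v$ is cohomologous to $\sum_i c_i(\varphi_i-\alpha_i\psi_i)+c$, then $\int v\,d\mu=c$ on the fiber, and $\dim_v\mu=h_\mu(f)/c$ is \emph{not} constant there. Only your family collapses, to the single entropy-maximizing measure of the fiber. This case contains $v\equiv1$, i.e.\ exactly the entropy version Theorem~\ref{Thm-int}, and no trivial argument yields intermediate entropies in it.

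The missing idea is an auxiliary observable whose extremal slices over $\hat\alpha$ and $\tilde\alpha$ contain only zero-entropy measures. The paper builds it as follows:
\begin{itemize}
\item By density of periodic measures and Corollary~\ref{corollary-A}, it takes finite convex combinations of periodic measures $\mu^1,\mu^2$ with $\mathcal P=\hat\alpha$ and $\mu^3,\mu^4$ with $\mathcal P=\tilde\alpha$, with pairwise disjoint supports.
\item It defines a Lipschitz $\varphi_{d+1}\colon X\to[-1,1]$ with $\varphi_{d+1}^{-1}(-1)=S_{\mu^1}\cup S_{\mu^3}$ and $\varphi_{d+1}^{-1}(1)=S_{\mu^2}\cup S_{\mu^4}$. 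Any invariant measure with $\int\varphi_{d+1}\,d\mu=\pm1$ then lives on finitely many periodic orbits and has zero entropy.
\item Lemma~\ref{lem:interior-lifting}, an inverse-function argument on affine combinations of measures, places $(\hat\alpha,a)$, $a\in(-1,1)$, in the interior of the enlarged rotation set. So Lemma~\ref{lemma-analytic} applies there with no separate non-degeneracy check.
\item The function $a\mapsto\dim_v(\hat\alpha,a)$ is continuous on $[-1,1]$, using Lemma~\ref{lemma-continuous} at the ends. It vanishes at $\pm1$, and its maximum $\dim_vK_{\hat\alpha}>0$ is therefore attained inside.
\item The intermediate value theorem then gives every $\hat h\in(0,\dim_vK_{\hat\alpha}]$ at an interior point. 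It is realized by the BSS equilibrium state of a H\"older potential, which also removes your ``main obstacle'' at $\hat h=\dim_vK_{\hat\alpha}$.
\end{itemize}

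Once the endpoints are equilibrium states of H\"older potentials, your path construction is more than needed. Linearly interpolating the two endpoint potentials already gives items 1--5. In the special case $\psi_i\equiv1$, the straight segment in the convex lifted rotation set together with the concavity bound is essentially the paper's argument.
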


\begin{remark}
	The formulation in terms of $v$-dimension and quotient Birkhoff averages is motivated by applications to multifractal analysis. Quantities of the form
	$$
	\frac{\int \varphi\,d\mu}{\int \psi\,d\mu}
	\qquad\text{and}\qquad
	\lim_{n\to\infty}\frac{\sum_{k=0}^{n-1}\varphi(f^k x)}{\sum_{k=0}^{n-1}\psi(f^k x)}
	$$
	arise naturally in mixed multifractal spectra and conditional variational principles; see, for example, \cite{BS2001,IJ2015}. They also appear in higher-dimensional multifractal analysis, where several observables must be treated simultaneously; see \cite{BSS2002,JT2021}. This is why we work throughout with vector valued potentials and the $v$-dimensional formalism.
\end{remark}

\subsection{Auxiliary lemmas}
\subsubsection{Convex realization of quotient averages}
For any $r\in \mathbb{R},$ denote $r^+=\{s\in\mathbb{R}:s>r\}$ and $r^-=\{s\in\mathbb{R}:s<r\}.$  For any $d \in \mathbb{N},$ $r=\left(r_1, \ldots, r_{d}\right)\in\mathbb{R}^d$ and $\xi=\left(\xi_1, \ldots, \xi_{d}\right)\in\{+,-\}^d$, we define $$r^\xi=\{s=\left(s_1, \ldots, s_{d}\right)\in\mathbb{R}^d:s_i\in r_i^{\xi_i} \text{ for }i=1,2,\cdots,d\}.$$ We denote $F^d=\{\left(\frac{p_1}{q_1}, \ldots, \frac{p_d}{q_d}\right):p_i,q_i\in\mathbb{R}\text{ and }q_i>0 \text{ for any }1\leq i\leq d\}.$
It is easy to check that
\begin{lemma}\label{lemma-E}
	Let $b_i=\frac{p^i}{q^i}\in F^1$ for $i=1,2.$
	\begin{description}
		\item[(1)] If $b_1=b_2,$ then $\frac{\theta p^1+(1-\theta)p^2}{\theta q^1+(1-\theta)q^2}=b_1=b_2$ for any $\theta\in[0,1].$
		\item[(2)] If $b_1\neq b_2,$ then $\frac{\theta p^1+(1-\theta)p^2}{\theta q^1+(1-\theta)q^2}$ is strictly monotonic on $\theta\in[0,1].$
	\end{description}
\end{lemma}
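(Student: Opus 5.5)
The plan is to write $g(\theta)=\frac{\theta p^1+(1-\theta)p^2}{\theta q^1+(1-\theta)q^2}$ for $\theta\in[0,1]$. First I will note that the denominator $D(\theta)=\theta q^1+(1-\theta)q^2$ is a convex combination of the positive numbers $q^1,q^2$. Hence $D(\theta)\ge\min\{q^1,q^2\}>0$ on $[0,1]$, so $g$ is well defined and smooth (indeed rational with no pole) on $[0,1]$.

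For (1), suppose $b_1=b_2=b$. Then $p^1=bq^1$ and $p^2=bq^2$, so the numerator is $\theta bq^1+(1-\theta)bq^2=bD(\theta)$. Dividing by $D(\theta)>0$ gives $g(\theta)=b$ for every $\theta$.

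For (2), I will differentiate directly. With $N(\theta)=\theta p^1+(1-\theta)p^2$ we have $N'=p^1-p^2$ and $D'=q^1-q^2$, so
$$
g'(\theta)=\frac{(p^1-p^2)D(\theta)-(q^1-q^2)N(\theta)}{D(\theta)^2}.
$$
Expanding the numerator, the $\theta$-terms cancel. It equals $(p^1-p^2)q^2-(q^1-q^2)p^2=p^1q^2-q^1p^2$, which is a constant independent of $\theta$. Since $q^1q^2>0$, we have $p^1q^2-q^1p^2=q^1q^2(b_1-b_2)\neq0$. Therefore $g'$ has a constant nonzero sign on $[0,1]$, and $g$ is strictly monotone: increasing if $b_1>b_2$ and decreasing if $b_1<b_2$.

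There is no real obstacle here. The only point to check is that $D$ stays positive, so that the quotient and its derivative make sense on the whole interval. An alternative derivative-free route is to observe that $g(\theta)$ is the weighted average $\lambda(\theta)b_1+(1-\lambda(\theta))b_2$ with $\lambda(\theta)=\theta q^1/D(\theta)$, and that $\lambda$ is strictly increasing from $0$ to $1$. This also yields the same monotonicity and will be convenient later when realizing intermediate quotient averages.
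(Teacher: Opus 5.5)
Your argument is correct: positivity of $q^1,q^2$ keeps the denominator away from zero, part (1) is immediate, and the derivative numerator $p^1q^2-q^1p^2=q^1q^2(b_1-b_2)$ is a nonzero constant. The paper gives no proof at all (it only says the lemma is ``easy to check''), so your computation is exactly the routine verification left to the reader. Your weighted-average reformulation $g(\theta)=\lambda(\theta)b_1+(1-\lambda(\theta))b_2$ is in fact the form in which the lemma is used later, for instance to say that a quotient ``lies between'' its endpoint values.
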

We shall use the following elementary selection lemma.

\begin{lemma}\label{lemma-C}
	Let $d\in\mathbb N$ and
	$
	\alpha=\left(\frac{p_1}{q_1},\ldots,\frac{p_d}{q_d}\right)\in F^d,
	$
	where all denominators are positive. Suppose that
	$$
	b_\xi=
	\left(
	\frac{p_1^\xi}{q_1^\xi},\ldots,\frac{p_d^\xi}{q_d^\xi}
	\right)\in F^d,
	\qquad \xi\in\{+,-\}^d,
	$$
	are $2^d$ points such that $b_\xi\in\alpha^\xi$ for every
	$\xi\in\{+,-\}^d$. Then there exist numbers
	$\theta_\xi\in[0,1]$, $\xi\in\{+,-\}^d$, such that
	$
	\sum_{\xi\in\{+,-\}^d}\theta_\xi=1
	$
	and
	\begin{equation}\label{equ-BC}
		\frac{
			\sum_{\xi\in\{+,-\}^d}\theta_\xi p_i^\xi
		}{
			\sum_{\xi\in\{+,-\}^d}\theta_\xi q_i^\xi
		}
		=
		\frac{p_i}{q_i},
		\qquad 1\leq i\leq d.
	\end{equation}
\end{lemma}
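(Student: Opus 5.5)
The plan is to reduce the lemma to the elementary fact that the origin lies in the convex hull of any $2^d$ points in $\mathbb R^d$ that occupy all $2^d$ open orthants, one point in each.

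\emph{Step 1: linearize the ratios.} For each $\xi\in\{+,-\}^d$ define $w_\xi\in\mathbb R^d$ by
$$
w_{\xi,i}:=p_i^\xi-\frac{p_i}{q_i}\,q_i^\xi=q_i^\xi\Bigl(\frac{p_i^\xi}{q_i^\xi}-\frac{p_i}{q_i}\Bigr),\qquad 1\le i\le d .
$$
Since $q_i^\xi>0$ and $b_\xi\in\alpha^\xi$, the sign of $w_{\xi,i}$ is exactly $\xi_i$, and it is nonzero. So $w_\xi$ lies in the open orthant with sign pattern $\xi$.

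For weights $\theta_\xi\ge0$ with $\sum_\xi\theta_\xi=1$, every denominator $\sum_\xi\theta_\xi q_i^\xi$ is strictly positive. Hence \eqref{equ-BC} is equivalent to the linear condition
$$
\sum_\xi\theta_\xi w_\xi=0 .
$$
The lemma is therefore exactly the assertion $0\in\operatorname{conv}\{w_\xi:\xi\in\{+,-\}^d\}$.

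\emph{Step 2: separation argument.} Let $C=\operatorname{conv}\{w_\xi\}$, which is compact and convex. Suppose $0\notin C$. By the strict separating hyperplane theorem there is a vector $c\in\mathbb R^d\setminus\{0\}$ with $c\cdot w_\xi>0$ for every $\xi$.

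Now choose the sign pattern $\xi^*$ by setting $\xi^*_i=-\operatorname{sign}(c_i)$ when $c_i\neq0$, and choosing $\xi^*_i$ arbitrarily when $c_i=0$. Then every term $c_iw_{\xi^*,i}$ is $\le 0$. The inequality is strict whenever $c_i\ne0$, and at least one such $i$ exists because $c\neq0$. Hence $c\cdot w_{\xi^*}<0$, a contradiction. So $0\in C$, and the resulting convex weights $\theta_\xi$ satisfy \eqref{equ-BC}.

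\emph{Alternative.} If one prefers to avoid separation, an induction on $d$ also works. Pair each $\xi$ that has $\xi_d=+$ with the pattern $\xi'$ that agrees with $\xi$ except $\xi'_d=-$. By Lemma~\ref{lemma-E}(2) and the intermediate value theorem, the segment joining $w_\xi$ and $w_{\xi'}$ contains a point whose last coordinate is $0$ and whose remaining coordinates keep the signs of $\xi$. This produces $2^{d-1}$ points in $\mathbb R^{d-1}$ filling all orthants there, and the induction hypothesis applies.

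There is no real obstacle in this proof. The only points needing care are checking that the signs of the $w_{\xi,i}$ are strict, which uses $b_\xi\in\alpha^\xi$ (an open condition) together with $q_i^\xi>0$, and checking that the positive denominators make the linear and ratio formulations equivalent.
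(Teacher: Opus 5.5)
Your proof is correct, but it takes a different route from the paper.

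\textbf{The paper's argument.} The paper never linearizes. It inducts on $d$ directly in the ratio form. It splits the family according to the last sign $\xi_{k+1}$ and applies the induction hypothesis to each half, getting weights $\tau^\pm_\xi$ that fix the first $k$ ratios. It then notes that the aggregated $(k+1)$-st ratio lies above (resp.\ below) the target, since a weighted mediant of ratios on one side of the target stays on that side. Finally it uses Lemma~\ref{lemma-E} once to choose $\lambda$ and sets $\theta_\xi=\lambda\tau^+_\xi$ or $(1-\lambda)\tau^-_\xi$.

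\textbf{Your argument.} Your key move is the substitution $w_{\xi,i}=p_i^\xi-\frac{p_i}{q_i}q_i^\xi$. Because all denominators are positive for any probability vector $\theta$, this turns the $d$ ratio constraints into the single linear condition $\sum_\xi\theta_\xi w_\xi=0$. The lemma then becomes the convex-geometry fact that $0$ lies in the convex hull of one point from each open orthant. Your separating-hyperplane argument, with the sign choice $\xi^*_i=-\operatorname{sign}(c_i)$, settles this in two lines.

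\textbf{What each buys.} Your approach gains brevity and transparency. The linear reformulation explains why Lemma~\ref{lemma-E} works at all, and it is the same numerator computation the paper itself later uses in the proof of Lemma~\ref{lem:interior-lifting}. The paper's version is constructive and self-contained. It needs only the one-variable intermediate-value fact of Lemma~\ref{lemma-E}, no separation theorem, and it produces weights with an explicit recursive product structure.

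\textbf{Your alternative induction.} This is also valid, but it runs in the opposite order from the paper's. You first eliminate the last coordinate along the $2^{d-1}$ segments joining $w_\xi$ and $w_{\xi'}$, where after linearization that coordinate is affine in the parameter, so Lemma~\ref{lemma-E} is not even needed. You then apply the hypothesis in dimension $d-1$. The paper instead applies the hypothesis to the two half-families first and then performs a single one-dimensional interpolation.
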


\begin{proof}
	For $d=1$, the assertion is exactly
	Lemma \ref{lemma-E}. Assume that the assertion holds for some
	$d=k$. We prove it for $d=k+1$.
	Let
	$
	\alpha=
	\left(
	\frac{p_1}{q_1},\ldots,\frac{p_{k+1}}{q_{k+1}}
	\right)\in F^{k+1},
	$
	and let
	$
	b_\xi=
	\left(
	\frac{p_1^\xi}{q_1^\xi},\ldots,
	\frac{p_{k+1}^\xi}{q_{k+1}^\xi}
	\right),
	~ \xi\in\{+,-\}^{k+1},
	$
	satisfy $b_\xi\in\alpha^\xi$ for every $\xi$.
	First fix the subfamily with $\xi_{k+1}=+$. Applying the induction
	hypothesis to the first $k$ coordinates, we obtain numbers
	$\tau_\xi^+\in[0,1]$, $\xi_{k+1}=+$, such that
	$
	\sum_{\xi_{k+1}=+}\tau_\xi^+=1
	$
	and
	$
	\frac{
		\sum_{\xi_{k+1}=+}\tau_\xi^+ p_i^\xi
	}{
		\sum_{\xi_{k+1}=+}\tau_\xi^+ q_i^\xi
	}
	=
	\frac{p_i}{q_i},
	~ 1\leq i\leq k.
	$
	Set
	$
	P_i^+=\sum_{\xi_{k+1}=+}\tau_\xi^+p_i^\xi,
	~
	Q_i^+=\sum_{\xi_{k+1}=+}\tau_\xi^+q_i^\xi.
	$
	Then
	$
	\frac{P_i^+}{Q_i^+}=\frac{p_i}{q_i},
	~ 1\leq i\leq k.
	$
	Moreover, since $\xi_{k+1}=+$ implies
	$
	\frac{p_{k+1}^\xi}{q_{k+1}^\xi}>
	\frac{p_{k+1}}{q_{k+1}},
	$
	we also have
	$
	\frac{P_{k+1}^+}{Q_{k+1}^+}>
	\frac{p_{k+1}}{q_{k+1}}.
	$
	Similarly, applying the induction hypothesis to the subfamily with
	$\xi_{k+1}=-$, we obtain numbers
	$\tau_\xi^-\in[0,1]$, $\xi_{k+1}=-$, such that
	$
	\sum_{\xi_{k+1}=-}\tau_\xi^-=1.
	$
	Defining
	$
	P_i^-=\sum_{\xi_{k+1}=-}\tau_\xi^-p_i^\xi,
	~
	Q_i^-=\sum_{\xi_{k+1}=-}\tau_\xi^-q_i^\xi,
	$
	we have
	$
	\frac{P_i^-}{Q_i^-}=\frac{p_i}{q_i},
	~ 1\leq i\leq k,
	$
	and
	$
	\frac{P_{k+1}^-}{Q_{k+1}^-}<
	\frac{p_{k+1}}{q_{k+1}}.
	$
	By Lemma \ref{lemma-E}, there exists $\lambda\in(0,1)$ such that
	$
	\frac{
		\lambda P_{k+1}^+ +(1-\lambda)P_{k+1}^-
	}{
		\lambda Q_{k+1}^+ +(1-\lambda)Q_{k+1}^-
	}
	=
	\frac{p_{k+1}}{q_{k+1}}.
	$
	For $1\leq i\leq k$, since
	$
	P_i^+=\frac{p_i}{q_i}Q_i^+,
	~
	P_i^-=\frac{p_i}{q_i}Q_i^-,
	$
	we also get
	$
	\frac{
		\lambda P_i^+ +(1-\lambda)P_i^-
	}{
		\lambda Q_i^+ +(1-\lambda)Q_i^-
	}
	=
	\frac{p_i}{q_i}.
	$
	Finally define
	$$
	\theta_\xi=
	\begin{cases}
		\lambda\tau_\xi^+,
		&
		\xi_{k+1}=+,
		\\
		(1-\lambda)\tau_\xi^-,
		&
		\xi_{k+1}=-.
	\end{cases}
	$$
	Then $\theta_\xi\in[0,1]$ and
	$
	\sum_{\xi\in\{+,-\}^{k+1}}\theta_\xi=1.
	$
	The preceding identities give (\ref{equ-BC}).
	This completes the induction and hence the proof.
\end{proof}
From Lemma \ref{lemma-C} we have
\begin{corollary}\label{corollary-A}
	Suppose that  $(X,f)$ is  a dynamical system.  Let $d \in \mathbb{N}$ and $(\Phi, \Psi) \in C(X)^d \times C(X)^d.$  Then for any $\alpha\in \operatorname{Int}\mathcal{P}(\mathcal{M}_f(X)),$ and $2^d$ invariant measures $\{\mu_\xi\}_{\xi\in\{+,-\}^d}$ with 
	$\mathcal{P}\left(\mu_\xi\right)\in \alpha^\xi \text{ for any }\xi\in \{+,-\}^d,$ there are $2^d$ numbers $\{\theta_\xi\}_{\xi\in\{+,-\}^d}\subseteq [0,1]$ such that $\sum_{\xi\in\{+,-\}^d}\theta_\xi=1$ and $\mathcal{P}\left(\sum_{\xi\in\{+,-\}^d}\theta_\xi\mu_\xi\right)= \alpha.$
\end{corollary}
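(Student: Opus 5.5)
The plan is to apply Lemma \ref{lemma-C} directly, with the integrals of the measures playing the role of the numerators and denominators.

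For each $\xi\in\{+,-\}^d$ and each $1\le i\le d$, put
$$
p_i^\xi=\int_X\varphi_i\,d\mu_\xi,\qquad q_i^\xi=\int_X\psi_i\,d\mu_\xi .
$$
Since $\psi_i>0$ is continuous on the compact space $X$, it is bounded below by a positive constant, so $q_i^\xi>0$. Hence $b_\xi:=\mathcal P(\mu_\xi)=(p_1^\xi/q_1^\xi,\ldots,p_d^\xi/q_d^\xi)$ belongs to $F^d$. Write $\alpha=(\alpha_1,\ldots,\alpha_d)$ as $\alpha_i=p_i/q_i$ with $p_i=\alpha_i$ and $q_i=1$; this is an element of $F^d$. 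The hypothesis $\mathcal P(\mu_\xi)\in\alpha^\xi$ is then exactly the hypothesis $b_\xi\in\alpha^\xi$ of Lemma \ref{lemma-C}.

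Lemma \ref{lemma-C} therefore gives weights $\theta_\xi\in[0,1]$ with $\sum_\xi\theta_\xi=1$ such that
$$
\frac{\sum_\xi\theta_\xi p_i^\xi}{\sum_\xi\theta_\xi q_i^\xi}=\alpha_i,\qquad 1\le i\le d .
$$
Now set $\mu=\sum_\xi\theta_\xi\mu_\xi$. It is a convex combination of invariant probability measures, so it is again an invariant probability measure. By linearity of the integral,
$$
\int_X\varphi_i\,d\mu=\sum_\xi\theta_\xi p_i^\xi,\qquad \int_X\psi_i\,d\mu=\sum_\xi\theta_\xi q_i^\xi>0 .
$$
Consequently $\mathcal P_i(\mu)=\alpha_i$ for every $i$, that is, $\mathcal P(\mu)=\alpha$.

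No step is a real obstacle: all the combinatorial work is done inside Lemma \ref{lemma-C}. The hypothesis $\alpha\in\operatorname{Int}\mathcal P(\mathcal M_f(X))$ is not used in this argument; it only serves to guarantee elsewhere that measures $\mu_\xi$ with $\mathcal P(\mu_\xi)\in\alpha^\xi$ actually exist.
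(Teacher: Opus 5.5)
Your proof is correct and is essentially the paper's argument: the paper obtains the corollary directly from Lemma \ref{lemma-C}, with $p_i^\xi=\int\varphi_i\,d\mu_\xi$ and $q_i^\xi=\int\psi_i\,d\mu_\xi$, and then uses linearity of the integral under convex combinations of invariant measures. Your remarks are also accurate: positivity of the denominators comes from the standing assumption $\psi_i>0$, and the hypothesis $\alpha\in\operatorname{Int}\mathcal P(\mathcal M_f(X))$ is not used in this step.
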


\begin{lemma}\label{lemma-D}
	Suppose that  $(X,f)$ is  a dynamical system.  Let $d \in \mathbb{N}$ and $(\Phi, \Psi) \in C(X)^d \times C(X)^d.$ If $A$ is a dense subset of $\mathcal{M}_f(X),$ then for any $\alpha\in \operatorname{Int}\mathcal{P}(\mathcal{M}_f(X)),$ there are $2^d$ invariant measures $\{\mu_\xi\}_{\xi\in\{+,-\}^d}\subset A$ such that { for any }$\xi\in \{+,-\}^d,$ one has
	$\mathcal{P}\left(\mu_\xi\right)\in \alpha^\xi.$
\end{lemma}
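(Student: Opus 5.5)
Since $\alpha$ lies in the open set $\operatorname{Int}\mathcal P(\mathcal M_f(X))$, we pick $\varepsilon>0$ so that the closed cube $\prod_{i=1}^d[\alpha_i-\varepsilon,\alpha_i+\varepsilon]$ is contained in $\mathcal P(\mathcal M_f(X))$. For each $\xi\in\{+,-\}^d$ we take the corner point $c_\xi\in\mathbb R^d$ with coordinates $(c_\xi)_i=\alpha_i+\varepsilon$ if $\xi_i=+$ and $(c_\xi)_i=\alpha_i-\varepsilon$ if $\xi_i=-$. Then $c_\xi\in\mathcal P(\mathcal M_f(X))$, and $c_\xi$ lies in the interior of the open orthant $\alpha^\xi$, at distance $\varepsilon$ from each of its bounding hyperplanes $\{s_i=\alpha_i\}$. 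So there is some $\nu_\xi\in\mathcal M_f(X)$ with $\mathcal P(\nu_\xi)=c_\xi$.

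Next we use continuity of $\mathcal P$ on $\mathcal M_f(X)$ with the weak$^*$ topology. Each coordinate $\mathcal P_i(\mu)=\int\varphi_i\,d\mu/\int\psi_i\,d\mu$ is a quotient of continuous linear functionals, and the denominator is bounded below by $\min\psi_i>0$, so $\mathcal P_i$ is continuous. Hence there is a weak$^*$ neighbourhood $\mathcal V_\xi$ of $\nu_\xi$ such that every $\mu\in\mathcal V_\xi$ satisfies $|\mathcal P_i(\mu)-(c_\xi)_i|<\varepsilon$ for all $i$. For such $\mu$ we get $\mathcal P_i(\mu)>\alpha_i$ when $\xi_i=+$ and $\mathcal P_i(\mu)<\alpha_i$ when $\xi_i=-$, that is, $\mathcal P(\mu)\in\alpha^\xi$.

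Since $A$ is dense in $\mathcal M_f(X)$, each $\mathcal V_\xi\cap A$ is nonempty, and we choose $\mu_\xi$ in it. Doing this for all $2^d$ sign vectors $\xi$ gives the required family.

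The only point needing care is the choice of the corners. They must be genuine values of $\mathcal P$, which is why the whole cube is taken inside $\mathcal P(\mathcal M_f(X))$ using that $\alpha$ is an interior point. They must also lie strictly inside the open orthants, so that a perturbation stays in $\alpha^\xi$, which is why $\varepsilon>0$ is used. The positivity $\psi_i>0$ gives continuity of $\mathcal P$. Everything else is routine.
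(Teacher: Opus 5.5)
Your proof is correct and follows essentially the same route as the paper. Both pick, for each sign pattern $\xi$, some $\nu_\xi$ with $\mathcal P(\nu_\xi)\in\alpha^\xi$ (available because $\alpha$ is interior), then use continuity of $\mathcal P$ (from the section's standing assumption $\psi_i>0$) and openness of $\alpha^\xi$ to replace $\nu_\xi$ by a nearby element of $A$; your cube-corner choice only makes explicit the existence step the paper states in one line.
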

\begin{proof}
	Since $\alpha\in \operatorname{Int}\mathcal{P}(\mathcal{M}_f(X)),$ for any $\xi\in \{+,-\}^d$ there is $\nu_\xi\in \mathcal{M}_f(X)$ such that $\mathcal{P}\left(\nu_\xi\right)\in \alpha^\xi.$  Since $A$ is dense in $\mathcal{M}_f(X),$  for any $\xi\in \{+,-\}^d$  we can choose  $\mu_\xi\in  A$ close to $\nu_\xi$  such that 
	$\mathcal{P}\left(\mu_\xi\right)\in \alpha^\xi$.
\end{proof}

\begin{lemma}\label{lem:coordinate-correction}
	Let
	$
	\Phi=(\varphi_1,\ldots,\varphi_d)\in C(X)^d, $
	$
	\Psi=(\psi_1,\ldots,\psi_d)\in C(X)^d,
	$
	with $\psi_i>0$ for all $1\leq i\leq d$, and let
	$
	\Phi^{new}=(\varphi_1,\ldots,\varphi_d,\varphi_{d+1}),$
	$
	\Psi^{new}=(\psi_1,\ldots,\psi_d,\psi_{d+1}),
	$
	with $\varphi_{d+1},\psi_{d+1}\in C(X)$ and $\psi_{d+1}>0$.
	Denote
	$$
	\mathcal P=\mathcal P^{(\Phi,\Psi)}:\mathcal M_f(X)\to \mathbb R^d,
	\qquad
	\mathcal P^{new}=\mathcal P^{(\Phi^{new},\Psi^{new})}:\mathcal M_f(X)\to \mathbb R^{d+1}.
	$$
	Assume that $\alpha\in \operatorname{Int}\mathcal P(\mathcal M_f(X))$ and that
	$
	(\alpha,\alpha_{d+1}^1),~
	(\alpha,\alpha_{d+1}^2)
	\in
	\mathcal P^{new}(\mathcal M_f(X))
	$
	for some $\alpha_{d+1}^1<\alpha_{d+1}^2$.
	Then for every
	$
	\alpha_{d+1}\in(\alpha_{d+1}^1,\alpha_{d+1}^2)
	$
	and every $1\leq j\leq d$, there exist invariant measures
	$
	\eta_j^+,~\eta_j^-\in \mathcal M_f(X)
	$
	and positive numbers
	$
	\varepsilon_j^+,~\varepsilon_j^->0
	$
	such that
	$$
	\mathcal P^{new}(\eta_j^+)
	=
	(\alpha+\varepsilon_j^+e_j,\alpha_{d+1}),
	\qquad
	\mathcal P^{new}(\eta_j^-)
	=
	(\alpha-\varepsilon_j^-e_j,\alpha_{d+1}),
	$$
	where $e_j$ denotes the $j$-th standard basis vector of $~\mathbb R^d$.
\end{lemma}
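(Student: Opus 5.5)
Since $\alpha\in\operatorname{Int}\mathcal P(\mathcal M_f(X))$, there is $\delta>0$ with $\alpha\pm\delta e_j\in\mathcal P(\mathcal M_f(X))$ for every $1\le j\le d$. Fix $j$ and the sign $+$; the sign $-$ is symmetric. The plan is to combine two types of measures: one measure that pushes the $j$-th coordinate above $\alpha_j$ while keeping the other first-$d$ coordinates equal to $\alpha$, and the two measures $\nu^1,\nu^2$ with $\mathcal P^{new}(\nu^k)=(\alpha,\alpha^k_{d+1})$. All combinations will be convex combinations, and all ratio computations use Lemma \ref{lemma-E}.

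\emph{Step 1 (freeing the last coordinate).} Since $\alpha_{d+1}^1<\alpha_{d+1}<\alpha_{d+1}^2$, Lemma \ref{lemma-E}(1) shows that every convex combination $\theta\nu^1+(1-\theta)\nu^2$ has first $d$ coordinates equal to $\alpha$. By Lemma \ref{lemma-E}(2), its $(d+1)$-th coordinate runs strictly monotonically and continuously from $\alpha^2_{d+1}$ to $\alpha^1_{d+1}$. Now pick $\rho\in\mathcal M_f(X)$ with $\mathcal P(\rho)=\alpha+\delta e_j$; its last coordinate $c=\mathcal P^{new}_{d+1}(\rho)$ is uncontrolled.

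\emph{Step 2 (mixing in $\rho$).} For $s\in[0,1]$ and $\theta\in[0,1]$, set
\[
\eta(s,\theta)=(1-s)\bigl(\theta\nu^1+(1-\theta)\nu^2\bigr)+s\rho .
\]
For $i\ne j$ with $i\le d$, both pieces have ratio $\alpha_i$, so Lemma \ref{lemma-E}(1) gives $\mathcal P_i(\eta(s,\theta))=\alpha_i$. For coordinate $j$, the pieces have ratios $\alpha_j$ and $\alpha_j+\delta$, so Lemma \ref{lemma-E}(2) gives $\mathcal P_j(\eta(s,\theta))>\alpha_j$ whenever $s>0$. The remaining requirement is to choose $s>0$ small and $\theta$ so that the $(d+1)$-th coordinate equals $\alpha_{d+1}$ exactly.

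\emph{Step 3 (hitting $\alpha_{d+1}$).} This is the only delicate point. The $(d+1)$-th coordinate of $\eta(s,\theta)$ is a ratio of affine functions of $(\theta,s)$ with positive denominator, hence jointly continuous. At $s=0$ it takes the value $\alpha^1_{d+1}<\alpha_{d+1}$ when $\theta=1$ and the value $\alpha^2_{d+1}>\alpha_{d+1}$ when $\theta=0$. By uniform continuity on the compact square, there is $s_0>0$ such that for $s=s_0$ these two strict inequalities persist at $\theta=1$ and $\theta=0$. The intermediate value theorem in $\theta$ then yields $\theta^*$ with $\mathcal P^{new}_{d+1}(\eta(s_0,\theta^*))=\alpha_{d+1}$.

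Set $\eta_j^+=\eta(s_0,\theta^*)$ and $\varepsilon_j^+=\mathcal P_j(\eta_j^+)-\alpha_j>0$. Repeating the argument with $\rho$ satisfying $\mathcal P(\rho)=\alpha-\delta e_j$ gives $\eta_j^-$ and $\varepsilon_j^->0$.
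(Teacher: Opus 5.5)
Your proposal is correct and follows essentially the same route as the paper. The paper also mixes the segment between the two measures at level $(\alpha,\alpha^1_{d+1})$ and $(\alpha,\alpha^2_{d+1})$ with a measure $\nu_j^\pm$ satisfying $\mathcal P(\nu_j^\pm)=\alpha\pm\varepsilon e_j$, fixes a small positive weight on $\nu_j^\pm$ so that the strict inequalities at the endpoints of the segment persist, and then applies the intermediate value theorem along the segment. One minor imprecision: the last coordinate of $\eta(s,\theta)$ is a ratio of bilinear (not jointly affine) expressions in $(s,\theta)$, but this does not affect the continuity you need.
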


\begin{proof}
	Choose $\mu^1,~\mu^2\in\mathcal M_f(X)$ such that
	$$
	\mathcal P^{new}(\mu^1)=(\alpha,\alpha_{d+1}^1),
	\qquad
	\mathcal P^{new}(\mu^2)=(\alpha,\alpha_{d+1}^2).
	$$
	For $s\in[0,1]$, set
	$
	\mu_s=(1-s)\mu^1+s\mu^2.
	$
	By Lemma \ref{lemma-E}, for every $1\leq i\leq d$ and every $s\in[0,1]$,
	$
	\mathcal P_i(\mu_s)=\alpha_i.
	$
	Moreover,
	$
	s\mapsto \mathcal P^{new}_{d+1}(\mu_s)
	$
	is continuous and strictly monotone from $\alpha_{d+1}^1$ to $\alpha_{d+1}^2$. Hence there exists $s_0\in(0,1)$ such that
	$
	\mathcal P^{new}(\mu_{s_0})=(\alpha,\alpha_{d+1}).
	$
	Since $\alpha\in\operatorname{Int}\mathcal P(\mathcal M_f(X))$, there exists $\varepsilon>0$ such that
	$
	\alpha+[-\varepsilon,\varepsilon]^d
	\subset
	\mathcal P(\mathcal M_f(X)).
	$
	Fix $1\leq j\leq d$. We may choose invariant measures
	$
	\nu_j^+,~\nu_j^-\in\mathcal M_f(X)
	$
	such that
	$$
	\mathcal P(\nu_j^+)=\alpha+\varepsilon e_j,
	\qquad
	\mathcal P(\nu_j^-)=\alpha-\varepsilon e_j.
	$$
	Now we construct $\eta_j^+$. For $\lambda\in[0,1]$ and $s\in[0,1]$, define
	$$
	M_{\lambda,s}^+
	=
	(1-\lambda)\mu_s+\lambda\nu_j^+.
	$$
	For every $i\neq j$, Lemma \ref{lemma-E} gives
	$
	\mathcal P_i(M_{\lambda,s}^+)=\alpha_i.
	$
	For $\lambda>0$, again by Lemma \ref{lemma-E},
	$
	\mathcal P_j(M_{\lambda,s}^+)>\alpha_j.
	$
	Consider the continuous function
	$
	(\lambda,s)\mapsto \mathcal P^{new}_{d+1}(M_{\lambda,s}^+).
	$
	At $\lambda=0$, its values at $s=0$ and $s=1$ are respectively
	$
	\alpha_{d+1}^1
	~\hbox{and}~
	\alpha_{d+1}^2.
	$
	Since
	$
	\alpha_{d+1}^1<\alpha_{d+1}<\alpha_{d+1}^2,
	$
	there exists $\lambda_+>0$ sufficiently small such that
	$$
	\mathcal P^{new}_{d+1}(M_{\lambda_+,0}^+)<\alpha_{d+1}
	<
	\mathcal P^{new}_{d+1}(M_{\lambda_+,1}^+).
	$$
	Then there exists $s_+\in(0,1)$ such that
	$
	\mathcal P^{new}_{d+1}(M_{\lambda_+,s_+}^+)=\alpha_{d+1}.
	$
	Set
	$
	\eta_j^+=M_{\lambda_+,s_+}^+.
	$
	Then
	$\mathcal P_i(\eta_j^+)=\alpha_i$
	for all $i\neq j,$
	while
	$
	\mathcal P_j(\eta_j^+)>\alpha_j.
	$
	Thus there exists $\varepsilon_j^+>0$ such that
	$
	\mathcal P^{new}(\eta_j^+)
	=
	(\alpha+\varepsilon_j^+e_j,\alpha_{d+1}).
	$
	Similarly, we obtain $\eta_j^-\in\mathcal M_f(X)$ and $\varepsilon_j^->0$ such that
	$
	\mathcal P^{new}(\eta_j^-)
	=
	(\alpha-\varepsilon_j^-e_j,\alpha_{d+1}).
	$
	The proof is complete.
\end{proof}

\begin{lemma}\label{lem:interior-lifting}
	Keep the notation of Lemma \ref{lem:coordinate-correction}. All interiors are taken with respect to the usual Euclidean topology of the corresponding ambient spaces. Assume that
	$
	\alpha\in \operatorname{Int}\mathcal P(\mathcal M_f(X))
	$
	and that
	$
	(\alpha,\alpha_{d+1}^1),~
	(\alpha,\alpha_{d+1}^2)
	\in
	\mathcal P^{new}(\mathcal M_f(X))
	$
	for some $\alpha_{d+1}^1<\alpha_{d+1}^2$. Then, for every
	$
	\alpha_{d+1}\in(\alpha_{d+1}^1,\alpha_{d+1}^2),
	$
	one has
	$
	(\alpha,\alpha_{d+1})
	\in
	\operatorname{Int}\mathcal P^{new}(\mathcal M_f(X)).
	$
\end{lemma}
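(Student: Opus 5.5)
Fix $\alpha_{d+1}\in(\alpha_{d+1}^1,\alpha_{d+1}^2)$. The plan is to produce $2(d+1)$ invariant measures whose images under $\mathcal P^{new}$ surround $(\alpha,\alpha_{d+1})$ in every coordinate direction. We then use the convexity mechanism of Lemma \ref{lemma-E} and Lemma \ref{lemma-C} to show that a whole neighborhood of $(\alpha,\alpha_{d+1})$ is attained. Since $\mathcal P^{new}$ is a ratio map and not affine, we cannot simply take convex hulls. Instead we argue through Lemma \ref{lemma-C} applied in dimension $d+1$, i.e. through Corollary \ref{corollary-A} for the pair $(\Phi^{new},\Psi^{new})$.

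First choose two intermediate levels $\beta^1<\beta^2$ with $\alpha_{d+1}^1<\beta^1<\alpha_{d+1}<\beta^2<\alpha_{d+1}^2$. Lemma \ref{lem:coordinate-correction} applies at both levels $\beta^1$ and $\beta^2$. It gives, for each $j$ and each $k\in\{1,2\}$, measures $\eta_j^{\pm,k}$ with
\[
\mathcal P^{new}(\eta_j^{\pm,k})=(\alpha\pm\varepsilon_j^{\pm,k}e_j,\beta^k).
\]
Set $\varepsilon_0=\min_{j,k,\pm}\varepsilon_j^{\pm,k}>0$ and $\delta_0=\min\{\alpha_{d+1}-\beta^1,\ \beta^2-\alpha_{d+1}\}>0$.

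Next take a target $\beta=(\beta',\beta_{d+1})$ with $|\beta'_i-\alpha_i|<\varepsilon_0/2$ for all $i$ and $|\beta_{d+1}-\alpha_{d+1}|<\delta_0/2$. We want $\beta\in\mathcal P^{new}(\mathcal M_f(X))$. We need, for each sign pattern $\xi\in\{+,-\}^{d+1}$, a measure $\mu_\xi$ with $\mathcal P^{new}(\mu_\xi)\in\beta^\xi$; Corollary \ref{corollary-A} then finishes the argument. This corollary is stated for $\alpha$ interior, but its proof only uses the existence of the $\mu_\xi$ via Lemma \ref{lemma-C}, so it applies to any point $\beta$.

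To build $\mu_\xi$, take $k=2$ if $\xi_{d+1}=+$ and $k=1$ otherwise, so that the last coordinate $\beta^k$ lies on the correct side of $\beta_{d+1}$. It remains to find a measure at level $\beta^k$ whose first $d$ coordinates lie in the orthant $\beta'^{\xi'}$. For this, apply Lemma \ref{lemma-C} in the first $d$ coordinates with target point $\alpha+\tfrac{\varepsilon_0}{2}\xi'$, where $\xi'$ is read as a $\pm1$ vector. Concretely, we need points in every orthant around $\alpha+\tfrac{\varepsilon_0}{2}\xi'$ at level $\beta^k$. The cross configuration of the $\eta_j^{\pm,k}$ (one point on each side along each axis, with all other coordinates equal to $\alpha$) does not directly give all $2^d$ orthants.

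Because of this, I instead aim for a simpler claim. I would prove that the set $S_k=\{\gamma\in\mathbb R^d:(\gamma,\beta^k)\in\mathcal P^{new}(\mathcal M_f(X))\}$ contains a neighborhood of $\alpha$. The argument is by induction on coordinates, mirroring the proof of Lemma \ref{lemma-C}. Lemma \ref{lemma-E} says that mixing two measures preserves any coordinate on which they agree, and moves the other coordinates monotonically. Mixing $\eta_1^{+,k}$ with $\eta_1^{-,k}$ therefore realizes every $(\alpha+se_1,\beta^k)$ with $|s|\le\varepsilon_0$. Next, mixing such measures with suitably adjusted versions in direction $e_2$ realizes a box in coordinates $1,2$. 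The adjustment must be done by re-running the argument of Lemma \ref{lem:coordinate-correction} with base point $\alpha+se_1$ in place of $\alpha$; this is legitimate because the base point is still interior and still has points above and below in the last coordinate.

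\textbf{Main obstacle.} The key step is maintaining, through this induction, a base point that is interior and admits measures at levels $\alpha^1_{d+1}$ and $\alpha^2_{d+1}$. I would handle it by replacing $\mu^1,\mu^2$ by mixtures with the $\nu_j^\pm$, controlling the perturbation of the last coordinate by continuity and smallness of the mixing weights.

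Once the box $S_k\supset\alpha+[-\varepsilon_1,\varepsilon_1]^d$ is established for $k=1,2$, the vertices of these boxes at the two levels give the $2^{d+1}$ measures $\mu_\xi$ for every $\beta$ close to $(\alpha,\alpha_{d+1})$. Corollary \ref{corollary-A} then shows that $\beta$ is attained, which proves $(\alpha,\alpha_{d+1})\in\operatorname{Int}\mathcal P^{new}(\mathcal M_f(X))$.
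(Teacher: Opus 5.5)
Your outer reduction is correct. Lemma~\ref{lemma-C} is purely algebraic, so for any $\beta$ near $(\alpha,\alpha_{d+1})$ it suffices to exhibit, for each $\xi\in\{+,-\}^{d+1}$, an invariant measure $\mu_\xi$ with $\mathcal P^{new}(\mu_\xi)\in\beta^\xi$. The two auxiliary levels $\beta^1<\alpha_{d+1}<\beta^2$ then take care of the last coordinate.

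The gap is in the step that carries all the content: the claim that each fiber $S_k$ contains $\alpha+[-\varepsilon_1,\varepsilon_1]^d$. Your coordinate induction does not establish this, for two reasons.
\begin{enumerate}
\item Re-running Lemma~\ref{lem:coordinate-correction} at the base point $\alpha+se_1$ produces some $\varepsilon_2^\pm(s)>0$. That lemma is purely qualitative, so nothing bounds $\varepsilon_2^\pm(s)$ from below uniformly in $s$. Since $\mathcal P^{new}$ is a ratio map, you also cannot fill in a box from such segments by convexity.
\item To apply Lemma~\ref{lem:coordinate-correction} at a displaced base point $b$, you need $b$ itself realized at levels strictly below and above $\beta^k$. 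Once several coordinates of $b$ are displaced, this requires mixtures of $\mu^1$ (resp.\ $\mu^2$) with several $\nu_j^\pm$ that hit the first $d$ coordinates of $b$ \emph{exactly}. ``Continuity and smallness of the mixing weights'' only controls the last coordinate.
\end{enumerate}
So you have reduced the lemma to a fixed-level neighborhood statement of essentially the same difficulty, and left that statement unproved.

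The missing idea is the decoupling that the paper exploits. Each $\eta_j^{\pm,k}$ agrees with $(\alpha,\beta^k)$ in every coordinate except the $j$-th. Let $\rho^k$ be a mixture of $\eta_1^{+,k}$ and $\eta_1^{-,k}$ with $\mathcal P^{new}(\rho^k)=(\alpha,\beta^k)$, and put $m(u)=(1-\sum_j u_j)\rho^k+\sum_j u_j\eta_j^{\xi'_j,k}$. Then the last coordinate of $\mathcal P^{new}(m(u))$ stays $\beta^k$, and
$\mathcal P_i(m(u))-\alpha_i=\xi'_i\,c_i u_i/\int\psi_i\,dm(u)$, where $c_i=\varepsilon_i^{\xi'_i,k}\int\psi_i\,d\eta_i^{\xi'_i,k}>0$.
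The conditions $\mathcal P_i(m(u))=\alpha_i+\xi'_i\varepsilon_1$ form a linear system in $u$. For small $\varepsilon_1$ this system has a small nonnegative solution, which gives the $2^d$ diagonal vertices $(\alpha+\varepsilon_1\xi',\beta^k)$ directly. These vertices are all your final step actually uses; the full box is never needed.

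With that fix your route is valid, and it differs from the paper's. You go through two auxiliary levels plus Lemma~\ref{lemma-C} in dimension $d+1$. The paper instead applies the same decoupling once, in all $d+1$ coordinates, with $\mu_*$ as center and $\mu^1,\mu^2$ as the arms in the last direction. It then uses the inverse function theorem with a sign analysis on each closed orthant to cover a whole ball by genuine convex combinations.
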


\begin{proof}
	Write
	$
	C^{new}:=\mathcal P^{new}(\mathcal M_f(X)).
	$
	As in the proof of Lemma \ref{lem:coordinate-correction}, choose
	$\mu^1,\mu^2\in\mathcal M_f(X)$ and $s_0\in(0,1)$ such that
	$$
	\mathcal P^{new}(\mu^1)=(\alpha,\alpha_{d+1}^1),
	\qquad
	\mathcal P^{new}(\mu^2)=(\alpha,\alpha_{d+1}^2),
	$$
	and
	$$
	\mathcal P^{new}(\mu_*)=(\alpha,\alpha_{d+1}),
	\qquad
	\mu_*:=(1-s_0)\mu^1+s_0\mu^2.
	$$
	By Lemma \ref{lem:coordinate-correction}, for each $1\leq j\leq d$ there exist
	$\eta_j^+,\eta_j^-\in\mathcal M_f(X)$ and $\varepsilon_j^+,\varepsilon_j^->0$
	such that
	$$
	\mathcal P^{new}(\eta_j^+)
	=
	(\alpha+\varepsilon_j^+e_j,\alpha_{d+1}),
	\qquad
	\mathcal P^{new}(\eta_j^-)
	=
	(\alpha-\varepsilon_j^-e_j,\alpha_{d+1}).
	$$
	
	For 
	$
	\sigma=(\sigma_1,\ldots,\sigma_d,\sigma_{d+1})\in\{+,-\}^{d+1},
	$
	define
	$$
	\eta_j^\sigma=
	\begin{cases}
		\eta_j^+,& \sigma_j=+,\\
		\eta_j^-,& \sigma_j=-,
	\end{cases}
	\qquad 1\leq j\leq d,
	$$
	and
	$$
	\eta_{d+1}^\sigma=
	\begin{cases}
		\mu^2,& \sigma_{d+1}=+,\\
		\mu^1,& \sigma_{d+1}=-.
	\end{cases}
	$$
	Denote
	$$
	\varepsilon_j^\sigma=
	\begin{cases}
		\varepsilon_j^+,& \sigma_j=+,\\
		\varepsilon_j^-,& \sigma_j=-,
	\end{cases}
	\qquad 1\leq j\leq d.
	$$

		For $u=(u_1,\ldots,u_{d+1})$ close to $0$, define the affine combination
	$$
	m_\sigma(u)
	=
	\left(1-\sum_{k=1}^{d+1}u_k\right)\mu_*
	+
	\sum_{k=1}^{d+1}u_k\eta_k^\sigma.
	$$
	Here $m_\sigma(u)$ is only regarded as a finite signed invariant measure
	when some coordinates of $u$ are negative. Thus, instead of applying
	$\mathcal P^{new}$ directly to $m_\sigma(u)$, we define a map
	$F_\sigma$ near $0\in\mathbb R^{d+1}$ by the quotient formulas
	$$
	(F_\sigma)_i(u)
	=
	\frac{\int \varphi_i\,dm_\sigma(u)}
	{\int \psi_i\,dm_\sigma(u)},
	\qquad
	1\leq i\leq d+1.
	$$
	Since $m_\sigma(0)=\mu_*$ and all functions $\psi_i$ are strictly positive,
	all denominators are positive at $u=0$. Hence, after restricting $u$ to a
	sufficiently small Euclidean neighborhood of $0$, all denominators remain
	positive. Therefore $F_\sigma$ is real analytic in a neighborhood of $0$.
	Moreover, whenever $m_\sigma(u)\in\mathcal M_f(X)$, one has
	$
	F_\sigma(u)=\mathcal P^{new}(m_\sigma(u)).
	$
	
		We record the coordinate formulas for $F_\sigma$ at $u$. 	First fix $1\leq i\leq d$. Directly from the definition of $F_\sigma$,
		we have
		$$
		(F_\sigma)_i(u)-\alpha_i
		=
		\frac{
			\int\varphi_i\,dm_\sigma(u)
			-
			\alpha_i\int\psi_i\,dm_\sigma(u)
		}{
			\int\psi_i\,dm_\sigma(u)
		}.
		$$
		Using the definition of $m_\sigma(u)$, the numerator is equal to
		$$
		\left(1-\sum_{k=1}^{d+1}u_k\right)
		\left(
		\int\varphi_i\,d\mu_*
		-
		\alpha_i\int\psi_i\,d\mu_*
		\right)
		+
		\sum_{k=1}^{d+1}u_k
		\left(
		\int\varphi_i\,d\eta_k^\sigma
		-
		\alpha_i\int\psi_i\,d\eta_k^\sigma
		\right).
		$$
		By the choice of $\mu_*$ and of the measures $\eta_k^\sigma$, all terms
		in this expression vanish except possibly the term with $k=i$. Hence
		$$
		(F_\sigma)_i(u)-\alpha_i
		=
		\begin{cases}
			\dfrac{u_i\varepsilon_i^+\int\psi_i\,d\eta_i^+}
			{\int\psi_i\,dm_\sigma(u)},
			& \sigma_i=+,\\[2.2ex]
			-\dfrac{u_i\varepsilon_i^-\int\psi_i\,d\eta_i^-}
			{\int\psi_i\,dm_\sigma(u)},
			& \sigma_i=-.
		\end{cases}
		$$
		Similarly, from the definition of
	$F_\sigma$,
	$$
	(F_\sigma)_{d+1}(u)-\alpha_{d+1}
	=
	\frac{
		\int\varphi_{d+1}\,dm_\sigma(u)
		-
		\alpha_{d+1}\int\psi_{d+1}\,dm_\sigma(u)
	}{
		\int\psi_{d+1}\,dm_\sigma(u)
	}.
	$$
	By the choice of $\mu_*$ and of the measures $\eta_k^\sigma$, the numerator
	has only one possible nonzero contribution, namely the term with $k=d+1$.
	Since $\eta_{d+1}^\sigma=\mu^2$ if $\sigma_{d+1}=+$ and
	$\eta_{d+1}^\sigma=\mu^1$ if $\sigma_{d+1}=-$, we get
	$$
	(F_\sigma)_{d+1}(u)-\alpha_{d+1}
	=
	\begin{cases}
		\dfrac{u_{d+1}(\alpha_{d+1}^2-\alpha_{d+1})
			\int\psi_{d+1}\,d\mu^2}
		{\int\psi_{d+1}\,dm_\sigma(u)},
		& \sigma_{d+1}=+,\\[2.2ex]
		-\dfrac{u_{d+1}(\alpha_{d+1}-\alpha_{d+1}^1)
			\int\psi_{d+1}\,d\mu^1}
		{\int\psi_{d+1}\,dm_\sigma(u)},
		& \sigma_{d+1}=-.
	\end{cases}
	$$
	
		It follows from the preceding coordinate formulas that
	$$
	\frac{\partial (F_\sigma)_i}{\partial u_k}(0)=0
	\qquad
	\hbox{whenever }i\neq k.
	$$
		Indeed, each coordinate formula has the form
	$
	(F_\sigma)_i(u)-\alpha_i=\frac{c_i u_i}{D_i(u)},
	$
	where $c_i\neq0$ and $D_i(0)>0$. If $k\neq i$, then along the line
	$u=t e_k$ one has $u_i=0$, and hence
	$
	(F_\sigma)_i(t e_k)-\alpha_i=0.
	$
	Therefore
	$
	\frac{\partial (F_\sigma)_i}{\partial u_k}(0)=0.
	$
	Thus $DF_\sigma(0)$ is a diagonal matrix. Moreover, its diagonal entries
	are given by nonzero multiples of
	$$
	\varepsilon_i^\sigma,
	\qquad
	1\leq i\leq d,
	$$
	and of
	$$
	\alpha_{d+1}^2-\alpha_{d+1}
	\quad\hbox{or}\quad
	\alpha_{d+1}-\alpha_{d+1}^1
	$$
	for the last coordinate. Since
	$
	\varepsilon_i^\sigma>0,
	$ $
	\alpha_{d+1}^1<\alpha_{d+1}<\alpha_{d+1}^2,
	$
	and since every integral of a positive function $\psi_i$ with respect to an
	invariant probability measure is positive, all diagonal entries are nonzero.
	Therefore $DF_\sigma(0)$ is invertible.
	
	By the inverse function theorem, there exist open neighborhoods
	$
	U_\sigma\ni0,
	$ $
	V_\sigma\ni(\alpha,\alpha_{d+1})
	$
	such that
	$
	F_\sigma:U_\sigma\to V_\sigma
	$
	is a diffeomorphism. Shrinking $U_\sigma$ if necessary, we may also assume
	that
	$$
	\sum_{k=1}^{d+1}|u_k|<1
	\qquad
	\hbox{for all }u\in U_\sigma.
	$$
	
	Let $\mathcal O_\sigma$ be the open orthant in $\mathbb R^{d+1}$ determined by $\sigma$, and let $\overline{\mathcal O_\sigma}$ be its closure. We claim that
	$$
	V_\sigma\cap
	\bigl((\alpha,\alpha_{d+1})+\overline{\mathcal O_\sigma}\bigr)
	\subset
	C^{new}.
	$$
	Indeed, take
	$
	z\in
	V_\sigma\cap
	\bigl((\alpha,\alpha_{d+1})+\overline{\mathcal O_\sigma}\bigr).
	$
	Since $F_\sigma:U_\sigma\to V_\sigma$ is a diffeomorphism, there exists a unique $u\in U_\sigma$ such that
	$
	F_\sigma(u)=z.
	$
	The coordinate formulas determine the sign of each $u_k$. Indeed, for
	$1\leq k\leq d$, the quantity
	$(F_\sigma)_k(u)-\alpha_k=z_k-\alpha_k$ has the same sign as $u_k$ when
	$\sigma_k=+$, and has the opposite sign to $u_k$ when $\sigma_k=-$,
	because all denominators and all coefficients appearing in the formulas
	are positive. On the other hand, since
	$
	z\in(\alpha,\alpha_{d+1})+\overline{\mathcal O_\sigma},
	$
	we have $z_k-\alpha_k\geq0$ when $\sigma_k=+$ and
	$z_k-\alpha_k\leq0$ when $\sigma_k=-$. Hence $u_k\geq0$ for every
	$1\leq k\leq d$.
	The same reasoning applies to the last coordinate, using
	$\alpha_{d+1}^2-\alpha_{d+1}>0$ when $\sigma_{d+1}=+$ and
	$\alpha_{d+1}-\alpha_{d+1}^1>0$ when $\sigma_{d+1}=-$. Therefore
	$
	u_k\geq0,
	$ $
	1\leq k\leq d+1.
	$
	Since $u\in U_\sigma$ and $\sum_{k=1}^{d+1}|u_k|<1$, we have
	$
	\sum_{k=1}^{d+1}u_k<1.
	$
	Hence $m_\sigma(u)$ is a convex combination of invariant probability measures, and therefore
	$
	m_\sigma(u)\in\mathcal M_f(X).
	$
	By the definition of $F_\sigma$, we have
	$
	z=F_\sigma(u)=\mathcal P^{new}(m_\sigma(u))\in C^{new}.
	$
	This proves the claim.
	
	For each $\sigma\in\{+,-\}^{d+1}$, choose $r_\sigma>0$ such that
	$
	B((\alpha,\alpha_{d+1}),r_\sigma)\subset V_\sigma.
	$
	Let
	$$
	r=\min_{\sigma\in\{+,-\}^{d+1}}r_\sigma>0.
	$$
	Then, for every $\sigma\in\{+,-\}^{d+1}$,
	$
	B((\alpha,\alpha_{d+1}),r)
	\cap
	\bigl((\alpha,\alpha_{d+1})+\overline{\mathcal O_\sigma}\bigr)
	\subset
	C^{new}.
	$
	Since
	$$
	B((\alpha,\alpha_{d+1}),r)
	=
	\bigcup_{\sigma\in\{+,-\}^{d+1}}
	\left[
	B((\alpha,\alpha_{d+1}),r)
	\cap
	\bigl((\alpha,\alpha_{d+1})+\overline{\mathcal O_\sigma}\bigr)
	\right],
	$$
	one has
	$
	B((\alpha,\alpha_{d+1}),r)\subset C^{new}.
	$
	Therefore
	$
	(\alpha,\alpha_{d+1})\in\operatorname{Int}C^{new}.
	$
	The proof is complete.
\end{proof}

\subsubsection{Analytic dependence of equilibrium states}
\begin{proposition}\label{proposition-standard-properties}
	Let $(X,f)$ be a topologically transitive dynamical system satisfying one of the conditions $(S1)$, $(S2)$ and $(S3)$. Then the following statements hold.
	\begin{enumerate}
		\item For any $0<\gamma\leq 1$, the pressure function
		$
		P_X:C^\gamma(X)\to \mathbb{R}
		$
		is analytic; see \cite{Ruelle1977}.
		
		\item Every H\"older continuous potential $\varphi$ has a unique equilibrium measure $\mu_\varphi$. Moreover, $\mu_\varphi$ is an ergodic measure with full support. In particular,
		$
		S_{\mu_\varphi}=X.
		$
		See \cite{Bowen2008,Ruelle1977,PU2010}.
		
		\item The set of periodic measures is weak$^*$ dense in $\mathcal{M}_f(X)$. This follows from the specification property, or from the spectral decomposition together with specification on the mixing components; see \cite{DenkerGrillenbergerSigmund1976,PU2010}.
		
		\item The entropy map
		$
		\mathcal{M}_f(X)\ni \mu\mapsto h_\mu(f)
		$
		is upper semi-continuous with respect to the weak$^*$ topology; see \cite{Ruelle1977,PU2010}.
		
		\item If, in addition, $(X,f)$ is topologically mixing, then for every H\"older continuous potential $\varphi$, the equilibrium measure $\mu_\varphi$ is Bernoulli in the sense of Definition \ref{def-Bernoulli}; see \cite{Bowen2008,Ruelle1977,PU2010}.
	\end{enumerate}
\end{proposition}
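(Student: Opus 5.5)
This proposition collects standard facts from thermodynamic formalism, so the plan is to reduce each of (S1)--(S3) to the symbolic setting via Markov partitions and then cite the classical results item by item. In case (S1), a transitive subshift of finite type decomposes by the spectral decomposition into finitely many cyclically permuted subsets $X_0,\dots,X_{p-1}$, on each of which $\sigma^p$ is topologically mixing. Hölder potentials on $X$ are Hölder for the symbolic metric, and pressure and equilibrium states for $(X,\sigma)$ correspond to those for $(X_0,\sigma^p)$ with the potential $S_p\varphi$. In cases (S2) and (S3), a locally maximal hyperbolic set (resp. a repeller) admits Markov partitions of arbitrarily small diameter. The associated coding $\pi:\Sigma_A\to X$ is Hölder, finite-to-one, and a bijection off a set that is null for every measure of positive entropy and for every equilibrium state. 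Moreover, $\varphi\circ\pi$ is Hölder whenever $\varphi$ is. These reductions, found in Bowen's monograph and in Przytycki--Urbański, transfer items 1, 2, 4 and 5 from $\Sigma_A$ to $X$.

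For item 1, analyticity of $P_X$ on $C^\gamma(X)$, I would use the Ruelle--Perron--Frobenius operator. On the one-sided shift, $P(\varphi)=\log\lambda(\varphi)$, where $\lambda(\varphi)$ is a simple isolated leading eigenvalue of $\mathcal L_\varphi$ acting on $C^\gamma$. Since $\varphi\mapsto\mathcal L_\varphi$ is analytic (the exponential series converges in operator norm), analytic perturbation theory makes $\lambda$ analytic. For two-sided shifts, I would first replace $\varphi$ by a cohomologous potential depending only on future coordinates. This costs a loss in the Hölder exponent, which is harmless since the map $\varphi\mapsto$ cohomologous potential is bounded linear.

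Item 2, uniqueness, ergodicity and full support, comes from the Gibbs property: $\mu_\varphi$ gives positive mass to every cylinder, and hence to every open set after pushing forward by $\pi$. Item 4 follows from expansivity, which holds in all three cases; for expansive maps the entropy map is upper semicontinuous by Walters. Item 5 is Bowen's theorem that on a mixing subshift of finite type equilibrium states of Hölder potentials are weak Bernoulli, hence Bernoulli by Ornstein, and the property transfers through $\pi$ as a measure-theoretic isomorphism. In the one-sided or non-invertible case (S3), I would pass to the natural extension, consistent with Definition \ref{def-Bernoulli}.

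Item 3, density of periodic measures, follows from specification on each mixing component (Sigmund) and then averaging over the cyclic permutation of the components. The main obstacle is bookkeeping rather than any new idea: checking that the Markov coding does not destroy uniqueness, which requires that the boundary set has zero measure for equilibrium states, and handling the non-mixing transitive case through the period-$p$ decomposition. The argument therefore rests on citing these classical results precisely.
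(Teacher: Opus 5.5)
Your plan matches the paper, which gives no argument of its own for this proposition and simply cites the same classical sources (Ruelle, Bowen, Przytycki--Urba\'nski, Denker--Grillenberger--Sigmund): Markov codings, transfer operators, expansivity, specification and the cyclic decomposition. One correction: the non-injectivity set of the coding need not be null for every positive-entropy measure. For a product Markov partition of a product of two hyperbolic toral automorphisms, $\delta_p\times m$, with $p$ a fixed point on a rectangle boundary and $m$ Haar measure, has positive entropy and is carried by that set. You only need nullness for fully supported ergodic measures such as equilibrium states, and uniqueness on $X$ follows more directly by lifting any equilibrium state through the finite-to-one coding, which preserves entropy.
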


\begin{lemma}\label{prop-analytic}
	Let $(X,f)$ be a topologically transitive dynamical system satisfying one of the conditions $(S1)$, $(S2)$ and $(S3)$.  For each H\"older continuous function $\varphi$, denote by $\mu_\varphi$ the unique equilibrium measure of $\varphi$. Let $\psi$ be a H\"older continuous function. Then  for any  $0<\gamma\leq 1,$ the function $\varphi\mapsto \int \psi d\mu_\varphi$ is analytic in $C^\gamma(X).$ 
\end{lemma}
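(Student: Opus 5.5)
The plan is to identify $\int\psi\,d\mu_\varphi$ with the derivative of the pressure in the direction $\psi$, and then get analyticity from the analyticity of $P_X$ on $C^\gamma(X)$. Fix $0<\gamma\le1$ and a Hölder function $\psi$. After lowering the exponent if needed, we may assume $\psi\in C^{\gamma'}(X)$ with $\gamma'\le\gamma$, so $C^\gamma(X)\subset C^{\gamma'}(X)$ continuously. For that reason I would carry out the argument in $C^{\gamma'}(X)$ and then restrict to $C^{\gamma}(X)$. The inclusion is a bounded linear map, and composing an analytic map with a bounded linear map is still analytic.

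First, by Proposition \ref{proposition-standard-properties}(2) and (4), every $\varphi\in C^{\gamma'}(X)$ has a unique equilibrium measure $\mu_\varphi$, and the entropy map is upper semicontinuous. So Lemma \ref{lemma-qiu} applies and gives
\[
\int\psi\,d\mu_\varphi=\lim_{t\to0}\frac{P_X(\varphi+t\psi)-P_X(\varphi)}{t}=dP_X(\varphi;\psi),
\]
which is the Gâteaux derivative of $P_X$ at $\varphi$ in the direction $\psi$.

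Second, by Proposition \ref{proposition-standard-properties}(1), $P_X:C^{\gamma'}(X)\to\mathbb R$ is analytic. Proposition \ref{prop-frech}(2) then says $P_X$ is Fréchet differentiable and that $DP_X:C^{\gamma'}(X)\to L(C^{\gamma'}(X),\mathbb R)$ is analytic. Proposition \ref{prop-frech}(1) identifies the Gâteaux derivative with the Fréchet one, so
\[
\int\psi\,d\mu_\varphi=DP_X(\varphi)(\psi).
\]

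Third, the map $\mathrm{ev}_\psi:L(C^{\gamma'}(X),\mathbb R)\to\mathbb R$, $A\mapsto A(\psi)$, is bounded and linear. Hence $\varphi\mapsto \mathrm{ev}_\psi(DP_X(\varphi))$ is analytic on $C^{\gamma'}(X)$. Concretely, if $DP_X(\varphi+H)=DP_X(\varphi)+\sum_k \frac{1}{k!}T_k(H^k)$, then applying $\mathrm{ev}_\psi$ term by term keeps absolute and uniform convergence, with norms multiplied by at most $\|\psi\|_{\gamma'}$. Restricting to $C^\gamma(X)$ through the continuous inclusion gives analyticity there.

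The only delicate point is the mismatch between the exponent of $\psi$ and $\gamma$: the identification $\int\psi\,d\mu_\varphi=DP_X(\varphi)(\psi)$ needs $\psi$ to lie in the Banach space on which $P_X$ is differentiated. Passing to the smaller exponent $\min\{\gamma,\gamma'\}$, as above, handles this.
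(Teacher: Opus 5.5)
Your proposal is correct and matches the paper's proof step for step. Both pass to the smaller exponent $\min\{\gamma,\eta\}$ so that $\psi$ lies in the space where $P_X$ is analytic, and identify $\int\psi\,d\mu_\varphi$ with $DP_X(\varphi)(\psi)$ via Lemma \ref{lemma-qiu} and Proposition \ref{prop-frech}. Both then compose with the evaluation functional at $\psi$ and restrict along the continuous embedding of $C^\gamma(X)$ into the larger H\"older space.
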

\begin{proof}
		Fix a H\"older continuous function $\psi$. Then there exists $\eta>0$ such that
		$\psi\in C^\eta(X)$. Let
		$
		\beta:=\min\{\gamma,\eta\}.
		$
		Then
		$
		C^\gamma(X)\subset C^\beta(X)
		$
		and 
		$
		\psi\in C^\beta(X).
		$
		By Proposition \ref{proposition-standard-properties}, the pressure map
		$
		P_X:C^\beta(X)\to \mathbb R
		$
		is analytic. Hence by Proposition \ref{prop-frech} its Fr\'echet derivative
		$$
		DP_X:C^\beta(X)\to \mathcal L(C^\beta(X),\mathbb R)
		$$
		is also analytic. Since $\psi\in C^\beta(X)$ is fixed, the map
		$$
		T\mapsto T(\psi), \qquad T\in \mathcal L(C^\beta(X),\mathbb R),
		$$
		is a continuous linear functional on $\mathcal L(C^\beta(X),\mathbb R)$.
		Therefore the map
		$
		\phi\mapsto DP_X(\phi)(\psi)
		$
		is analytic on $C^\beta(X)$.
		For every $\phi\in C^\gamma(X)\subset C^\beta(X)$, by Proposition \ref{prop-frech} one has
		$$
		DP_X(\phi)(\psi)
		=
		\left.\frac{d}{dt}P_X(\phi+t\psi)\right|_{t=0}.
		$$
		By Lemma \ref{lemma-qiu} and Proposition \ref{proposition-standard-properties}, we have
		$$
		\left.\frac{d}{dt}P_X(\phi+t\psi)\right|_{t=0}
		=
		\int_X \psi\, d\mu_\phi.
		$$
		Since $C^\gamma(X)$ is continuously embedded into $C^\beta(X)$, the
		restriction of an analytic map on $C^\beta(X)$ to $C^\gamma(X)$ is
		still analytic. Therefore
		$
		\phi\mapsto \int_X \psi\, d\mu_\phi
		$
		is analytic on $C^\gamma(X)$. 
\end{proof}
Given vectors $\alpha=\left(\alpha_1, \ldots, \alpha_d\right) \in \mathbb{R}^d$ and $\Phi=\left(\varphi_1, \ldots, \varphi_d\right) \in C(X)^d$ we shall write
$$
\alpha * \Phi=\left(\alpha_1 \varphi_1, \ldots, \alpha_d \varphi_d\right) \in C(X)^d \quad \text { and } \quad\langle\alpha, \Phi\rangle=\sum_{i=1}^d \alpha_i \varphi_i \in C(X) .
$$
\begin{lemma}[{\cite[Theorems 8, 12 and 13]{BSS2002}}]\label{lemma-analytic}
	Let $(X,f)$ be a topologically transitive dynamical system satisfying one
	of the conditions $(S1)$, $(S2)$ and $(S3)$. Given $d\in\mathbb N^+$,
	$0<\gamma\leq1$, $v\in C^\gamma(X)$ with $v>0$, and
	$
	(\Phi,\Psi)\in C^\gamma(X)^d\times C^\gamma(X)^d
	$
	with
	$
	\psi_i>0
	$
	for every $1\leq i\leq d$. Then, for every $\alpha\in\mathbb R^d$, the following statements hold.
	\begin{enumerate}
		\item[(1)] If
		$
		\alpha\notin \mathcal{P}(\mathcal{M}_f(X)),
		$
		then
		$
		K_\alpha=\emptyset.
		$
		
		\item[(2)] If
		$
		\alpha\in \operatorname{Int}\mathcal{P}(\mathcal{M}_f(X)),
		$
		then
		$
		K_\alpha\neq\emptyset
		$
		and
		$$
		\dim_v K_\alpha
		=
		\max
		\left\{
		\frac{h_\mu(f)}{\int v\,d\mu}:
		\mu\in\mathcal{M}_f(X),\ 
		\mathcal{P}(\mu)=\alpha
		\right\}.
		$$
		Moreover, there exist $q(\alpha)\in\mathbb R^d$ and an ergodic equilibrium measure
		$\mu_\alpha$ of the H\"older continuous potential
		$
		\left\langle q(\alpha),\Phi-\alpha*\Psi\right\rangle
		-
		(\dim_v K_\alpha)v
		$
		such that
		$$
		\mathcal{P}(\mu_\alpha)=\alpha,
		\qquad
		\mu_\alpha(K_\alpha)=1,
		\qquad
		\dim_v\mu_\alpha=\dim_vK_\alpha.
		$$
		
		\item[(3)] If further $(X,f)$ is topologically mixing, then the maps
		$
		\alpha\mapsto \dim_v K_\alpha
		$
		and $\alpha\mapsto q(\alpha)$
		are  analytic in
		$
		\operatorname{Int}\mathcal{P}(\mathcal{M}_f(X)).\footnote{The analytic dependence of $\alpha\mapsto q(\alpha)$ is given in Page 87 of \cite{BSS2002}.}
		$
	\end{enumerate}
\end{lemma}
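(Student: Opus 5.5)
We treat the three items in order, using thermodynamic formalism on $C^\gamma(X)$: analyticity of $P_X$ (Proposition \ref{proposition-standard-properties}), uniqueness of equilibrium states, and the derivative formula in Lemma \ref{lemma-qiu}.

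For (1), take $x\in K_\alpha$ and look at the empirical measures $\frac1n\sum_{k<n}\delta_{f^kx}$. Any weak$^*$ limit point $\mu$ is $f$-invariant. Since $\varphi_{i,n}(x)/\psi_{i,n}(x)\to\alpha_i$ and $\psi_i>0$, passing to the limit in numerator and denominator separately gives $\int\varphi_i\,d\mu/\int\psi_i\,d\mu=\alpha_i$. Hence $\alpha=\mathcal P(\mu)\in\mathcal P(\mathcal M_f(X))$, which proves (1) by contraposition.

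For (2), fix $\alpha$ in the interior. For $q\in\mathbb R^d$ let $T_\alpha(q)$ be the unique real number with
$$P_X\big(\langle q,\Phi-\alpha*\Psi\rangle-T_\alpha(q)v\big)=0.$$
It exists because $v>0$, so $s\mapsto P_X(\cdot-sv)$ is strictly decreasing, continuous and onto $\mathbb R$. By the implicit function theorem and analyticity of $P_X$, the map $q\mapsto T_\alpha(q)$ is analytic, and it is convex.

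\emph{Coercivity.} I show $T_\alpha(q)\to\infty$ as $|q|\to\infty$. Because $\alpha$ is interior, for every unit vector $q$ there is an invariant $\nu$ with $\int\langle q,\Phi-\alpha*\Psi\rangle\,d\nu\ge c>0$. The variational principle then gives $T_\alpha(sq)\ge (sc)/\max v-\text{const}$, uniformly in the direction by compactness of the sphere. So $T_\alpha$ attains its minimum at some $q(\alpha)$.

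\emph{The minimizer gives the measure.} Set $\mu_\alpha$ to be the equilibrium state of $\langle q(\alpha),\Phi-\alpha*\Psi\rangle-T_\alpha(q(\alpha))v$. Differentiating the defining identity with Lemma \ref{lemma-qiu} gives
$$\partial_{q_i}T_\alpha=\frac{\int(\varphi_i-\alpha_i\psi_i)\,d\mu}{\int v\,d\mu},$$
where $\mu$ is the equilibrium state at that $q$. At the minimizer this vanishes, so $\mathcal P(\mu_\alpha)=\alpha$. The equilibrium identity then reads $h_{\mu_\alpha}-T\int v\,d\mu_\alpha=0$, so $h_{\mu_\alpha}/\int v\,d\mu_\alpha=T:=\min T_\alpha$. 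Conversely, any invariant $\mu$ with $\mathcal P(\mu)=\alpha$ satisfies $h_\mu-T\int v\,d\mu\le P_X(\cdots)=0$. Hence the maximum in (2) exists and equals $T$.

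\emph{Identifying $T$ with $\dim_vK_\alpha$.} Birkhoff's theorem for the ergodic $\mu_\alpha$ gives $\mu_\alpha(K_\alpha)=1$. Lemma \ref{lemma-F} then yields $\dim_v K_\alpha\ge\dim_v\mu_\alpha=T$. For the reverse inequality, fix $\delta>0$. Points of $K_\alpha$ have $|\varphi_{i,n}-\alpha_i\psi_{i,n}|\le\delta n$ for all large $n$. On cylinders $X(\mathbf U)$ meeting $K_\alpha$, the weight $\exp(-Tv(\mathbf U))$ is therefore at most $\exp\big(\langle q,\cdot\rangle(\mathbf U)-Tv(\mathbf U)+|q|\delta n\big)$. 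Using the zero pressure of this potential to bound the covering sums gives $\dim_vK_\alpha\le T+O(\delta)$. I expect this covering estimate, carried out with the Carathéodory structure of $\dim_v$, to be the most technical part of (2).

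For (3), assume in addition that $f$ is mixing. Write $G(q,\alpha)=\nabla_qT_\alpha(q)$; it is analytic in $(q,\alpha)$, since $\alpha$ enters the potential linearly. Then $q(\alpha)$ is characterized by $G(q(\alpha),\alpha)=0$. The Hessian $\partial_qG$ is, up to positive factors, the asymptotic variance matrix of the functions $\varphi_i-\alpha_i\psi_i-\partial_iT\,v$ with respect to $\mu_\alpha$.

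\textbf{Main obstacle: nondegeneracy of the Hessian.} I would argue by contradiction. If the Hessian were degenerate, some nontrivial combination $\langle w,\Phi-\alpha*\Psi\rangle$ would be cohomologous to $c\,v$ for a constant $c$. Integrating against invariant measures, the function $\langle w,\Phi-\alpha*\Psi\rangle/v$ would then have constant integral ratio $c$ on $\mathcal M_f(X)$. This is incompatible with $\alpha$ being an interior point: measures on the two sides of $\alpha$ in direction $w$ would give $\int\langle w,\Phi-\alpha*\Psi\rangle\,d\mu$ of both signs.

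Once the Hessian is invertible, the analytic implicit function theorem shows that $\alpha\mapsto q(\alpha)$ is analytic. Consequently $\alpha\mapsto\dim_vK_\alpha=T_\alpha(q(\alpha))$ is analytic as well.
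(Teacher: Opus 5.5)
Your proposal is correct. The paper gives no proof of this lemma: it imports items (1)--(3) directly from \cite[Theorems 8, 12 and 13]{BSS2002}, and takes the analyticity of $q(\alpha)$ from p.~87 there. Your argument is a self-contained reconstruction of essentially the Barreira--Saussol--Schmeling proof. Its ingredients are:
\begin{itemize}
\item the auxiliary function $T_\alpha(q)$ defined by $P_X(\langle q,\Phi-\alpha*\Psi\rangle-T_\alpha(q)v)=0$;
\item coercivity from interiority of $\alpha$, which only needs a small cube around $\alpha$ inside $\mathcal P(\mathcal M_f(X))$, so possible non-convexity of the quotient rotation set does no harm;
\item the equilibrium state at a minimizer, giving the lower bound and the conditional variational formula;
\item a Carath\'eodory covering argument for the upper bound. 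It goes through on the sets where $|\varphi_{i,n}-\alpha_i\psi_{i,n}|\le\delta n$ for all $n\ge N$, using $T\ge0$, $v(\mathbf U)\ge v_n(x)$ and countable stability of $\dim_{v,\mathcal U}$;
\item the analytic implicit function theorem for (3).
\end{itemize}
Citing buys brevity. Your version makes explicit which thermodynamic inputs are actually used.

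Two points should be stated explicitly rather than left implicit.

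First, in (3) the map $\alpha\mapsto q(\alpha)$ is only well defined if the minimizer of $T_\alpha$ is unique. You also need the local zero of $\nabla_qT_{\alpha'}$ produced by the implicit function theorem to be \emph{the} minimizer for nearby $\alpha'$. Both follow from your own computation. Your nondegeneracy argument never uses that $q$ is a critical point: it handles an arbitrary constant $c=\langle w,\nabla_qT_\alpha(q)\rangle$. Indeed, $\int(\langle w,\Phi-\alpha*\Psi\rangle-cv)\,d\mu=0$ automatically by your first-derivative formula. Hence the Hessian of $T_\alpha$ is positive definite at every $q$, $T_\alpha$ is strictly convex, and any critical point is the unique global minimizer.

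Second, the step ``zero asymptotic variance implies cohomologous to $cv$'' should be quoted in its Liv\v{s}ic form. For H\"older data on these systems, $\sigma^2_{\mu}(u)=0$ forces $u-\int u\,d\mu=h\circ f-h$ with $h$ continuous. Continuity of $h$ is what lets you integrate the relation against \emph{all} invariant measures, as your contradiction with interiority requires. A transfer function known only to lie in $L^2(\mu_\alpha)$ would not suffice.
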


\begin{lemma}\label{lemma-continuous}
	Let $(X,f)$ be a dynamical system satisfying that the entropy map
	$
	\mathcal{M}_f(X)\ni \mu\mapsto h_\mu(f)
	$
	is upper semi-continuous. Given $d\in\mathbb N^+$,
	$0<\gamma\leq1$, $v\in C^\gamma(X)$ with $v>0$, and
	$
	(\Phi,\Psi)\in C^\gamma(X)^d\times C^\gamma(X)^d
	$
	with
	$
	\psi_i>0
	$
	for every $1\leq i\leq d$.  For any vectors $\alpha\in \mathcal{P}(\mathcal{M}_f(X)),$ denote $$\dim_v(\alpha):=\sup\left\{\frac{h_\mu(f)}{\int v d\mu}: \mu\in\mathcal{M}_f(X) ~\text{and }~\mathcal{P}(\mu)=\alpha\right\}.$$ 
	Then the function $\alpha\mapsto \dim_v(\alpha)$ is upper semicontinuous on $\mathcal{P}(\mathcal{M}_f(X)).$ In particular,   it is continuous at any $\beta$ with  $\dim_v(\beta)=0.$
\end{lemma}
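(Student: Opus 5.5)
The plan is to prove upper semicontinuity directly from the compactness of $\mathcal M_f(X)$ and the upper semicontinuity of the entropy map, with the continuity claim then following from $\dim_v\ge 0$.

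First, the sup defining $\dim_v(\alpha)$ is attained. The set $\{\mu\in\mathcal M_f(X):\mathcal P(\mu)=\alpha\}$ is a closed subset of the compact space $\mathcal M_f(X)$, since $\mathcal P$ is weak$^*$ continuous; here we use that each $\int\psi_i\,d\mu\ge\min\psi_i>0$, so the quotients are continuous. It is nonempty because $\alpha\in\mathcal P(\mathcal M_f(X))$. The function $\mu\mapsto h_\mu(f)/\int v\,d\mu$ is upper semicontinuous. Indeed, $h_\mu(f)\ge0$ is u.s.c., and $\mu\mapsto 1/\int v\,d\mu$ is continuous and positive, bounded between $1/\max v$ and $1/\min v$. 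The product of a nonnegative u.s.c. function and a positive continuous function is u.s.c. Hence the sup over the compact fiber is a maximum.

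Next, let $\alpha_n\to\alpha$ in $\mathcal P(\mathcal M_f(X))$ and pass to a subsequence with $\dim_v(\alpha_n)\to\limsup_n\dim_v(\alpha_n)$. Choose maximizers $\mu_n$ with $\mathcal P(\mu_n)=\alpha_n$ and $h_{\mu_n}(f)/\int v\,d\mu_n=\dim_v(\alpha_n)$. By compactness, pass to a further subsequence with $\mu_n\to\mu$ weak$^*$. Continuity of $\mathcal P$ gives $\mathcal P(\mu)=\alpha$, so $\mu$ is admissible for $\alpha$. Upper semicontinuity of the quotient then yields
$$\limsup_n\dim_v(\alpha_n)\le \frac{h_\mu(f)}{\int v\,d\mu}\le\dim_v(\alpha).$$
This is upper semicontinuity of $\alpha\mapsto\dim_v(\alpha)$.

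Finally, $\dim_v(\cdot)\ge0$ because entropy is nonnegative and $v>0$. So if $\dim_v(\beta)=0$, then for any $\alpha_n\to\beta$ we get $0\le\liminf\dim_v(\alpha_n)\le\limsup\dim_v(\alpha_n)\le\dim_v(\beta)=0$, which gives continuity at $\beta$.

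No step is a real obstacle. The only points needing care are the u.s.c. of the quotient, which requires $h\ge0$ and the positive continuous denominator, and the closedness of the fibers of $\mathcal P$, which requires $\psi_i>0$. The Hölder hypotheses are not used here.
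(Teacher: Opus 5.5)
Your proposal is correct and follows the paper's proof essentially step for step: the supremum is attained on the compact fiber because $\mu\mapsto h_\mu(f)/\int v\,d\mu$ is upper semicontinuous; you take a subsequence realizing the $\limsup$ and pass to a convergent subsequence of maximizers; and continuity at $\beta$ comes from the squeeze $0\le\liminf\le\limsup\le\dim_v(\beta)=0$. Your explicit justifications, namely that a nonnegative upper semicontinuous function times a positive continuous one is upper semicontinuous and that $\psi_i>0$ makes $\mathcal P$ continuous so its fibers are closed, only fill in details the paper states without proof.
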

\begin{proof}
	Fix
	$
	\alpha\in\mathcal P(\mathcal M_f(X))
	$
	and write
	$
	F(\alpha)
	=
	\{\mu\in\mathcal M_f(X):\mathcal P(\mu)=\alpha\}.
	$
	Since $\psi_i>0$ for every $1\leq i\leq d$, the map $\mathcal P$ is continuous.
	Hence $F(\alpha)$ is a nonempty compact subset of $\mathcal M_f(X)$.
	
	Since $v>0$, the map
	$
	\mu\mapsto\int v\,d\mu
	$
	is continuous and bounded away from zero on $\mathcal M_f(X)$. Therefore
	$
	\mu\mapsto h_\mu(f)/\int v\,d\mu
	$
	is upper semi-continuous. Hence the supremum in the definition of
	$
	\dim_v(\alpha)
	$
	is attained, that is,
	$$
	\dim_v(\alpha)
	=
	\max
	\left\{
	\frac{h_\mu(f)}{\int v\,d\mu}:
	\mu\in\mathcal M_f(X)
	\text{ and }
	\mathcal P(\mu)=\alpha
	\right\}.
	$$
	
	Let
	$
	\alpha_n\to\alpha
	$
	in
	$
	\mathcal P(\mathcal M_f(X)).
	$
	For each $n$, choose
	$
	\mu_n\in F(\alpha_n)
	$
	such that
	$
	\dim_v(\alpha_n)
	=
	\frac{h_{\mu_n}(f)}{\int v\,d\mu_n}.
	$
	To prove upper semi-continuity, it is enough to consider a subsequence along which
	$
	\dim_v(\alpha_n)
	$
	converges to
	$
	\limsup\limits_{n\to\infty}\dim_v(\alpha_n).
	$
	Passing to a further subsequence if necessary, compactness of $\mathcal M_f(X)$ gives
	$
	\mu_n\to\mu
	$
	for some
	$
	\mu\in\mathcal M_f(X).
	$
	By continuity of $\mathcal P$,
	$
	\mathcal P(\mu)=\alpha,
	$
	and hence
	$
	\mu\in F(\alpha).
	$
	Using the upper semi-continuity of the entropy map and the continuity of
	$
	\mu\mapsto\int v\,d\mu,
	$
	we obtain
	$$
	\limsup_{n\to\infty}\dim_v(\alpha_n)
	\leq
	\frac{h_\mu(f)}{\int v\,d\mu}
	\leq
	\dim_v(\alpha).
	$$
	Thus
	$
	\alpha\mapsto\dim_v(\alpha)
	$
	is upper semi-continuous on
	$
	\mathcal P(\mathcal M_f(X)).
	$
	
	If
	$
	\dim_v(\beta)=0,
	$
	then, since entropy is nonnegative,
	$
	\dim_v(\alpha)\geq0
	$
	for every
	$
	\alpha\in\mathcal P(\mathcal M_f(X)).
	$
	Therefore upper semi-continuity gives
	$$
	0
	\leq
	\liminf_{\alpha\to\beta}\dim_v(\alpha)
	\leq
	\limsup_{\alpha\to\beta}\dim_v(\alpha)
	\leq
	\dim_v(\beta)
	=
	0.
	$$
	Hence
	$
	\alpha\mapsto\dim_v(\alpha)
	$
	is continuous at $\beta$.
\end{proof}

\subsection{Proof of Theorem \ref{Thm-int-2}}

\begin{proof}[Proof of Theorem \ref{Thm-int-2}]
	(1) Suppose that $(X,f)$ is topologically mixing.  Fix $\hat{\alpha},\tilde{\alpha}\in \operatorname{Int}\mathcal{P}(\mathcal{M}_f(X))$, $0< \hat{h}\leq \dim_v K_{\hat{\alpha}}$ and $0<  \tilde{h}\leq  \dim_v K_{\tilde{\alpha}}.$  By Proposition \ref{proposition-standard-properties} and Lemma \ref{lemma-D}  there are periodic measures $\{\mu_\xi^i\}_{\xi\in\{+,-\}^d},$ $i=1,2,3,4$ such that 
	$$\mathcal{P}\left(\mu_\xi^1\right),\mathcal{P}\left(\mu_\xi^2\right)\in \hat{\alpha}^\xi, \mathcal{P}\left(\mu^3_\xi\right),\mathcal{P}\left(\mu^4_\xi\right)\in \tilde{\alpha}^\xi ~\text{for any }~\xi\in \{+,-\}^d,$$
	\begin{equation}\label{equ-AA}
		\{\mu_\xi^i\}_{\xi\in\{+,-\}^d}\cap\{\mu_\xi^j\}_{\xi\in\{+,-\}^d}=\emptyset ~\text{for any }~i\neq j\in\{1,2,3,4\}.
	\end{equation}
	We choose the periodic measures successively so that their average vectors are
	pairwise distinct. This is possible because the required sign conditions are
	open conditions and because periodic measures are dense in $\mathcal M_f(X)$.
	Thus the chosen periodic measures are pairwise distinct.

	By Corollary \ref{corollary-A} for any $i=1,2,3,4,$ there  are $\{\theta^i_\xi\}_{\xi\in\{+,-\}^d}\subseteq [0,1],$   such that $\sum_{\xi\in\{+,-\}^d}\theta^i_\xi=1$ and $$\mathcal{P}\left(\mu^1\right)=\mathcal{P}\left(\mu^2\right)= \hat{\alpha},\qquad \mathcal{P}\left(\mu^3\right)=\mathcal{P}\left(\mu^4\right)= \tilde{\alpha},$$
	where $\mu^i=\sum_{\xi\in\{+,-\}^d}\theta^i_\xi\mu^i_\xi.$
	Note that for any periodic measures $\omega_1$ and $\omega_2$, one has $\omega_1\neq \omega_2$ if and only if $S_{\omega_1}\cap S_{\omega_2}=\emptyset.$ From (\ref{equ-AA}), we have $S_{\mu^1}\cup S_{\mu^3}$ and $S_{\mu^2}\cup S_{\mu^4}$ are two disjoint closed sets. Define
	$$\varphi_{d+1}(x):=\frac{\rho(x,S_{\mu^1}\cup S_{\mu^3})-\rho(x,S_{\mu^2}\cup S_{\mu^4})}{\rho(x,S_{\mu^1}\cup S_{\mu^3})+\rho(x,S_{\mu^2}\cup S_{\mu^4})} ~\text{ for any }~x\in X.$$
	It is easy to check that $\varphi_{d+1}$ is a Lipschitz continuous function from $X$ to $[-1,1]$ and 
	$$\varphi_{d+1}^{-1}(-1)=S_{\mu^1}\cup S_{\mu^3},\qquad \varphi_{d+1}^{-1}(1)=S_{\mu^2}\cup S_{\mu^4}.$$
	
	We next show that $h_\mu(f)=0$ if 
	\begin{equation}\label{equ-A}
		\int \varphi_{d+1}d\mu=1,-1.
	\end{equation}
	Since
	$
	-1\leq \varphi_{d+1}\leq 1,
	$
	if $\mu\in\mathcal M_f(X)$ satisfies
	$
	\int \varphi_{d+1}\,d\mu=1,
	$
	then
	$
	\varphi_{d+1}=1
	$
	for $\mu$-almost every point. Hence $\mu$ is concentrated on
	$
	\varphi_{d+1}^{-1}(1)=S_{\mu^2}\cup S_{\mu^4}.
	$
	The set $S_{\mu^2}\cup S_{\mu^4}$ is a finite union of periodic orbits.
	Therefore every invariant probability measure concentrated on it has
	metric entropy zero. Thus
	$
	h_\mu(f)=0.
	$
	Similarly, if
	$
	\int \varphi_{d+1}\,d\mu=-1,
	$
	then $\mu$ is concentrated on
	$
	\varphi_{d+1}^{-1}(-1)=S_{\mu^1}\cup S_{\mu^3},
	$
	which is again a finite union of periodic orbits. Hence in this case also
	$
	h_\mu(f)=0.
	$
	
	Define $\psi_{d+1}\equiv 1,$ and let 
	$$
	\Phi^{new}=\left(\varphi_1, \ldots, \varphi_d,\varphi_{d+1}\right) \quad \text { and } \quad \Psi^{new}=\left(\psi_1, \ldots, \psi_d,\psi_{d+1}\right) .
	$$
	Denote $\mathcal{P}^{new}=\mathcal{P}^{(\Phi^{new}, \Psi^{new})}$.
	For any vectors $\beta=\left(\beta_1, \ldots, \beta_{d+1}\right) \in \mathbb{R}^{d+1}$ denote $$\dim_v(\beta):=\sup\left\{\frac{h_\mu(f)}{\int v d\mu}: \mu\in\mathcal{M}_f(X) ~\text{and }~\mathcal{P}^{new}(\mu)=\beta\right\}.$$ 
	For any vectors $\beta=\left(\beta_1, \ldots, \beta_{d+1}\right) \in \mathcal{P}^{new}(\mathcal{M}_f(X))$ and any $q\in \mathbb{R}^{d+1},$
	let $\mu_{\beta,q}$ be the unique equilibrium measure of the function $\langle q, \Phi^{new}-\beta*\Psi ^{new}\rangle-v\dim_v(\beta)$. 
	From Lemma \ref{lemma-analytic}, for any  $\beta\in \operatorname{Int}\mathcal{P}^{new}(\mathcal{M}_f(X)),$   there is $q(\beta)\in \mathbb{R}^{d+1}$ such that $\mu_{\beta,q(\beta)}$ satisfies 
	$$\mathcal{P}^{new}(\mu_{\beta,q(\beta)})=\beta, \qquad \frac{h_{\mu_{\beta,q(\beta)}}(f)}{\int v d\mu_{\beta,q(\beta)}}=\dim_v(\beta).$$
	Moreover, $q(\beta)$ and $\dim_v(\beta)$ are analytic in $\operatorname{Int}\mathcal{P}^{new}(\mathcal{M}_f(X)).$
	Thus $\langle q(\beta), \Phi^{new}-\beta*\Psi^{new} \rangle-v\dim_v(\beta)$ is analytic in $\operatorname{Int}\mathcal{P}^{new}(\mathcal{M}_f(X)).$
	Combining with  lemma \ref{prop-analytic}, for any H\"older continuous function $\psi,$ the function $$\beta\mapsto \int \psi d\mu_{\beta,q(\beta)}$$ is analytic in $\operatorname{Int}\mathcal{P}^{new}(\mathcal{M}_f(X)).$
	
	From (\ref{equ-A}) we have 
	\begin{equation}\label{equ-BB}
		\dim_v(\hat{\alpha},\pm 1)=\dim_v(\tilde{\alpha},\pm 1)=0.
	\end{equation} 
	Since $(\hat{\alpha},\pm 1),(\tilde{\alpha},\pm 1)\in \mathcal{P}^{new}(\mathcal{M}_f(X)),$ then by Lemma \ref{lem:interior-lifting}, for every $a\in(-1,1)$ the points
	$(\hat\alpha,a)$ and $(\tilde\alpha,a)$ belong to
	$\operatorname{Int}\mathcal P^{new}(\mathcal M_f(X))$. Hence, by Lemma
	\ref{lemma-analytic}, the maps
	$$
	a\mapsto \dim_v(\hat\alpha,a),
	\qquad
	a\mapsto \dim_v(\tilde\alpha,a)
	$$
	are analytic, and thus continuous, on $(-1,1)$.
	At the endpoints, by (\ref{equ-BB})
	and Lemma \ref{lemma-continuous}, the spectrum is continuous. Thus both maps are continuous on $[-1,1]$.
	Note that  
	$$\max_{\alpha_{d+1}\in [-1,1]}\dim_v(\hat{\alpha},\alpha_{d+1})=\max \left\{\frac{h_\mu(f)}{\int v d\mu}: \mu \in \mathcal{M}_f(X) \text { and } \mathcal{P}(\mu)=\hat{\alpha}\right\}=\dim_v K_{\hat{\alpha}}$$
	and
	$$\max_{\alpha_{d+1}\in [-1,1]}\dim_v(\tilde{\alpha},\alpha_{d+1})=\max \left\{\frac{h_\mu(f)}{\int v d\mu}: \mu \in \mathcal{M}_f(X) \text { and } \mathcal{P}(\mu)=\tilde{\alpha}\right\}=\dim_v K_{\tilde{\alpha}}.$$
	So there are $\hat{\alpha}_{d+1},\tilde{\alpha}_{d+1}\in (-1,1)$ such that 
	$\dim_v(\hat{\alpha},\hat{\alpha}_{d+1})=\hat{h}$ and $\dim_v(\tilde{\alpha},\tilde{\alpha}_{d+1})=\tilde{h}.$  It's clear that $(\hat{\alpha},\hat{\alpha}_{d+1}),(\tilde{\alpha},\tilde{\alpha}_{d+1})\in \operatorname{Int}\mathcal{P}^{new}(\mathcal{M}_f(X)).$
	Set
	$$
	\hat{\beta}:=(\hat{\alpha},\hat{\alpha}_{d+1}),
	\qquad
	\tilde{\beta}:=(\tilde{\alpha},\tilde{\alpha}_{d+1}).
	$$
	Then
	$
	\hat{\beta},\tilde{\beta}
	\in
	\operatorname{Int}\mathcal P^{new}(\mathcal M_f(X)),
	$
	and
	$
	\dim_v(\hat{\beta})=\hat h,
	$
	$
	\dim_v(\tilde{\beta})=\tilde h.
	$
	For $\beta\in\operatorname{Int}\mathcal P^{new}(\mathcal M_f(X))$, put
	$$
	G_\beta
	=
	\left\langle q(\beta),
	\Phi^{new}-\beta*\Psi^{new}
	\right\rangle
	-
	v\dim_v(\beta).
	$$
	Let
	$
	\hat{\nu}:=\mu_{\hat{\beta},q(\hat{\beta})},
	$
	$
	\tilde{\nu}:=\mu_{\tilde{\beta},q(\tilde{\beta})}.
	$
	Then
	$
	\mathcal P^{new}(\hat{\nu})=\hat{\beta},
	$
	$
	\mathcal P^{new}(\tilde{\nu})=\tilde{\beta},
	$
	and
	$
	\dim_v\hat{\nu}=\hat h,
	$
	$
	\dim_v\tilde{\nu}=\tilde h.
	$
	
	Define a H\"older potential
	$$
	G_t=(1-t)G_{\tilde{\beta}}+tG_{\hat{\beta}},
	\qquad
	0\leq t\leq1,
	$$
	and let $\mu_t$ be the unique equilibrium measure of $G_t$.
	Since $G_t$ is a real analytic curve in $C^\gamma(X)$, Lemma \ref{prop-analytic} implies that, for
	every H\"older continuous function $\psi$,
	$$
	t\mapsto \int\psi\,d\mu_t
	$$
	is analytic. This gives property (1). In particular,
	$
	t\mapsto \int v\,d\mu_t
	$
	is analytic and strictly positive.
	
	Moreover, by Proposition \ref{proposition-standard-properties},
	$
	t\mapsto P_X(G_t)
	$
	is analytic. Since
	$$
	h_{\mu_t}(f)
	=
	P_X(G_t)-\int G_t\,d\mu_t,
	$$
	and since
	$$
	t\mapsto \int G_t\,d\mu_t
	=
	(1-t)\int G_{\tilde{\beta}}\,d\mu_t
	+
	t\int G_{\hat{\beta}}\,d\mu_t
	$$
	is analytic, it follows that
	$
	t\mapsto h_{\mu_t}(f)
	$
	is analytic. Therefore
	$$
	t\mapsto \dim_v\mu_t
	=
	\frac{h_{\mu_t}(f)}{\int v\,d\mu_t}
	$$
	is analytic. This gives property (2).
	
	Since $\mu_0$ is the unique equilibrium measure of $G_{\tilde{\beta}}$ and
	$\mu_1$ is the unique equilibrium measure of $G_{\hat{\beta}}$, we have
	$
	\mu_0=\tilde{\nu},
	$
	$
	\mu_1=\hat{\nu}.
	$
	Consequently,
	$$
	(\mathcal P(\mu_0),\dim_v\mu_0)
	=
	(\tilde{\alpha},\tilde h),
	\qquad
	(\mathcal P(\mu_1),\dim_v\mu_1)
	=
	(\hat{\alpha},\hat h).
	$$
	This gives property (3).
	
	For each $1\leq i\leq d$, both functions
	$
	t\mapsto \int\varphi_i\,d\mu_t
	$
	and 
	$
	t\mapsto \int\psi_i\,d\mu_t
	$
	are analytic, and the latter is strictly positive. Hence
	$
	t\mapsto
	\mathcal P_i(\mu_t)
	=
	\frac{\int\varphi_i\,d\mu_t}{\int\psi_i\,d\mu_t}
	$
	is analytic. Thus $t\mapsto\mathcal P(\mu_t)$ is analytic. This gives property (4).
	
	Finally, by Proposition \ref{proposition-standard-properties}, every $\mu_t$ is a Bernoulli
	measure and satisfies
	$
	S_{\mu_t}=X.
	$ This gives property (5).
	
	If, in addition, $\hat{\alpha}=\tilde{\alpha}=\alpha$, then we choose the
	path in a different way. Define
	$$
	\beta(t)
	=
	\left(
	\alpha,
	(1-t)\tilde{\alpha}_{d+1}+t\hat{\alpha}_{d+1}
	\right),
	\qquad
	0\leq t\leq1.
	$$
	If $\tilde{\alpha}_{d+1}=\hat{\alpha}_{d+1}$, then $\beta(t)$ is constant.
	Otherwise, Lemma \ref{lem:interior-lifting} gives
	$$
	\beta(t)\in
	\operatorname{Int}\mathcal P^{new}(\mathcal M_f(X))
	\qquad
	\hbox{for every }0<t<1.
	$$
	The endpoints also belong to
	$\operatorname{Int}\mathcal P^{new}(\mathcal M_f(X))$ by construction.
	Now define
	$$
	\mu_t:=\mu_{\beta(t),q(\beta(t))},
	\qquad
	0\leq t\leq1.
	$$
	The preceding analytic-dependence argument gives properties (1), (2), (3)
	and (5). Moreover,
	$
	\mathcal P^{new}(\mu_t)=\beta(t),
	$
	and hence
	$
	\mathcal P(\mu_t)=\alpha
	$
	for every 
	$t\in[0,1].$
	This proves the additional assertion in the case
	$\hat{\alpha}=\tilde{\alpha}$.

	It remains to prove the final assertion in the special case
	$
	\psi_i\equiv1,
	~
	1\leq i\leq d.
	$
	In this case $\psi_{d+1}\equiv1$ as well, and therefore
	$\mathcal P^{new}$ is affine. Hence
	$
	C^{new}:=\mathcal P^{new}(\mathcal M_f(X))
	$
	is a compact convex subset of $\mathbb R^{d+1}$. Since
	$\tilde{\beta}$ and $\hat{\beta}$ are interior points of $C^{new}$, the
	segment
	$
	\beta(t)=(1-t)\tilde{\beta}+t\hat{\beta},
	~
	0\leq t\leq1,
	$
	is contained in $\operatorname{Int}C^{new}$. Define again
	$$
	\mu_t:=\mu_{\beta(t),q(\beta(t))}.
	$$
	Then the same argument as above gives properties (1)--(5). Moreover,
	$
	\mathcal P^{new}(\mu_t)=\beta(t),
	$
	and hence, for every $1\leq i\leq d$,
	$$
	\mathcal P_i(\mu_t)
	=
	\beta_i(t)
	=
	t\hat{\alpha}_i+(1-t)\tilde{\alpha}_i.
	$$
	
	We next prove the lower bound for $\dim_v\mu_t$. Since
	$\mathcal P^{new}$ is affine and
	$
	\mathcal P^{new}(\tilde{\nu})=\tilde{\beta},
	$
	$
	\mathcal P^{new}(\hat{\nu})=\hat{\beta},
	$
	we have
	$
	\mathcal P^{new}((1-t)\tilde{\nu}+t\hat{\nu})
	=
	\beta(t).
	$
	Therefore, by the definition of $\dim_v(\beta(t))$,
	$$
	\dim_v(\beta(t))
	\geq
	\frac{
		h_{(1-t)\tilde{\nu}+t\hat{\nu}}(f)
	}{
		\int v\,d((1-t)\tilde{\nu}+t\hat{\nu})
	}
	=
	\frac{
		(1-t)h_{\tilde{\nu}}(f)+t h_{\hat{\nu}}(f)
	}{
		(1-t)\int v\,d\tilde{\nu}+t\int v\,d\hat{\nu}
	}.
	$$
	By Lemma \ref{lemma-E}, this quotient lies between
	$
	\dim_v\tilde{\nu}=\tilde h
	$
	and
	$
	\dim_v\hat{\nu}=\hat h.
	$
	Thus
	$$
	\dim_v\mu_t
	=
	\dim_v(\beta(t))
	\geq
	\min\{\hat h,\tilde h\}
	\qquad
	\hbox{for every }t\in[0,1].
	$$
	Since
	$
	\dim_v\mu_0=\tilde h,
	$
	$
	\dim_v\mu_1=\hat h,
	$
	we obtain
	$
	\inf_{t\in[0,1]}\dim_v\mu_t
	=
	\min\{\hat h,\tilde h\}.
	$
	This proves the special assertion when $\psi_i\equiv1$ for all
	$1\leq i\leq d$.

	(2) Now assume $(X,f)$ is topologically transitive. Fix $\hat{\alpha},\tilde{\alpha}\in \operatorname{Int}\mathcal{P}(\mathcal{M}_f(X))$, $0< \hat{h}\leq \dim_v K_{\hat{\alpha}}$ and $0<  \tilde{h}\leq  \dim_v K_{\tilde{\alpha}}.$   If $(X,f)$ is mixing, the result follows directly from the mixing case. Thus we assume that $(X,f)$ is transitive but not mixing.
	It is well known that $(X,f)$ admits a cyclic decomposition: there exist an
	integer $p\geq2$ and pairwise disjoint compact sets
	$$
	X=X_0\sqcup X_1\sqcup\cdots\sqcup X_{p-1}
	$$
	such that
	$$
	f(X_j)=X_{j+1\pmod p},
	\qquad
	0\leq j\leq p-1,
	$$
	and
	$$
	g:=f^p|_{X_0}:X_0\to X_0
	$$
	is topologically mixing. Moreover, the induced system $(X_0,g)$ belongs to
	the same class as $(X,f)$: it is a mixing subshift of finite type in case
	$(S1)$, a mixing locally maximal hyperbolic set for the diffeomorphism
	$f^p$ in case $(S2)$, and a mixing repeller for the map $f^p$ in
	case $(S3)$. See, for example,
	\cite[Proposition 4.5.6]{LM1995},
	\cite[Theorem 18.3.1]{KH1995}, and
	\cite[Theorem 11.2.15]{VianaOliveira2016}.

			For each $1\le i\le d$, define
		   $$
			\varphi_i^{(p)}=\sum_{k=0}^{p-1}\varphi_i\circ f^k,
			\qquad
			\psi_i^{(p)}=\sum_{k=0}^{p-1}\psi_i\circ f^k
			$$
			on $X_0$, and write
			$$
			\Phi^{(p)}=(\varphi_1^{(p)},\ldots,\varphi_d^{(p)}),\qquad
			\Psi^{(p)}=(\psi_1^{(p)},\ldots,\psi_d^{(p)}).
			$$
			Since $\psi_i>0$, we also have $\psi_i^{(p)}>0$. Denote $\mathcal{P}^{(p)}=\mathcal{P}^{(\Phi^{(p)}, \Psi^{(p)})}$.
			Define 
			$
			v^{(p)}=\sum_{k=0}^{p-1}v\circ f^k
			$
			on $X_0$. Then $v^{(p)}\in C^\gamma(X_0)$ and $v^{(p)}>0.$
			
			Define
			$$
			L:\mathcal{M}_g(X_0)\to\mathcal{M}_f(X),\qquad
			L(\nu)=\frac1p\sum_{j=0}^{p-1}(f^j)_*\nu.
			$$
			Then $L(\nu)$ is $f$-invariant. Moreover
			$$
			\int_X \varphi_i\,dL(\nu)
			=
			\frac1p\sum_{j=0}^{p-1}\int_{X_0}\varphi_i\circ f^j\,d\nu
			=
			\frac1p\int_{X_0}\varphi_i^{(p)}\,d\nu,
			$$
			and similarly
			$
			\int_X \psi_i\,dL(\nu)
			=
			\frac1p\int_{X_0}\psi_i^{(p)}\,d\nu.
			$
			Hence
			$
			\mathcal P(L(\nu))=\mathcal P^{(p)}(\nu).
			$
			Now define
			$$
			R:\mathcal{M}_f(X)\to\mathcal{M}_g(X_0),\qquad
			R(\mu)=p\,\mu|_{X_0}.
			$$
			Then $R(\mu)\in\mathcal M_g(X_0)$.
			Now we prove that $R\circ L=\mathrm{id}_{\mathcal M_g(X_0)}$. Let
			$\nu\in\mathcal M_g(X_0)$ and let $B\subset X_0$ be a Borel set. Since
			$f^j(X_0)= X_j$ and $X_j\cap X_0=\emptyset$ for $1\leq j\leq p-1$, we have
			$(f^j)^{-1}B=\emptyset$ as a subset of $X_0$ for $1\leq j\leq p-1$. Therefore
			$$
			R(L(\nu))(B)
			=
			p\,L(\nu)(B)
			=
			p\cdot\frac1p\nu(B)
			=
			\nu(B).
			$$
			Thus $R\circ L=\mathrm{id}_{\mathcal M_g(X_0)}$.
			Next we prove that $L\circ R=\mathrm{id}_{\mathcal M_f(X)}$. Let
			$\mu\in\mathcal M_f(X)$ and let $A\subset X$ be a Borel set. Write
			$
			A_j=A\cap X_j,
			~
			0\leq j\leq p-1.
			$
			Since $f^j(X_0)= X_j$, we have
			$
			X_0\cap f^{-j}A=X_0\cap f^{-j}A_j.
			$
			Moreover, $f^{-j}A_j\subset X_0$. 
			Therefore
			\begin{equation*}
				\begin{split}
					L(R(\mu))(A)
					&=
					\frac1p\sum_{j=0}^{p-1}(f^j)_*(R(\mu))(A)=\frac1p\sum_{j=0}^{p-1}R(\mu)(f^{-j}A)\\
					&=
					\sum_{j=0}^{p-1}\mu(X_0\cap f^{-j}A)=\sum_{j=0}^{p-1}\mu(f^{-j}A_j)\\
					&=
					\sum_{j=0}^{p-1}\mu(A_j)=\mu(A).
				\end{split}
			\end{equation*}
			Thus $L\circ R=\mathrm{id}_{\mathcal M_f(X)}$.
			Consequently, $L$ is a bijection between $\mathcal M_g(X_0)$ and
			$\mathcal M_f(X)$, with inverse $R$. Since
			$
			\mathcal P(L(\nu))=\mathcal P^{(p)}(\nu)
			$
			for every $\nu\in\mathcal M_g(X_0)$, we obtain
			$
			\mathcal P^{(p)}(\mathcal M_g(X_0))
			=
			\mathcal P(\mathcal M_f(X)).
			$
			In particular,
			$
			\hat\alpha,\tilde\alpha\in
			\operatorname{Int}\mathcal P^{(p)}(\mathcal M_g(X_0)).
			$

			Let
			$
			\nu_j=(f^j)_*\nu,
			~
			0\leq j\leq p-1.
			$
			Then $\nu_0=\nu$, $\nu_j\in\mathcal M_{f^p}(X_j)$, and
			$
			L(\nu)=\frac1p\sum_{j=0}^{p-1}\nu_j.
			$
			We first compare the entropies of the systems induced by $f^p$ on the
			cyclic components. Fix $0\leq j\leq p-1$, where the index is taken modulo
			$p$, and set
			$$
			S_j=f|_{X_j}:X_j\to X_{j+1},
			\qquad
			T_j=f^{p-1}|_{X_{j+1}}:X_{j+1}\to X_j.
			$$
			Then
			$
			T_j\circ S_j=f^p|_{X_j},
			~
			S_j\circ T_j=f^p|_{X_{j+1}}.
			$
			Moreover,
			$
			(S_j)_*\nu_j=\nu_{j+1},
			~
			(T_j)_*\nu_{j+1}=\nu_j.
			$
			The first identity shows that $S_j$ is a factor map from
			$
			(X_j,f^p|_{X_j},\nu_j)
			$
			to
			$
			(X_{j+1},f^p|_{X_{j+1}},\nu_{j+1}),
			$
			and the second identity shows that $T_j$ is a factor map in the opposite
			direction. Hence, we have
			$
			h_{\nu_{j+1}}(f^p|_{X_{j+1}})
			\leq
			h_{\nu_j}(f^p|_{X_j})
			$
			and
			$
			h_{\nu_j}(f^p|_{X_j})
			\leq
			h_{\nu_{j+1}}(f^p|_{X_{j+1}}).
			$
			Therefore one has
			$
			h_{\nu_{j+1}}(f^p|_{X_{j+1}})
			=
			h_{\nu_j}(f^p|_{X_j}).
			$
			Iterating this equality gives
			$
			h_{\nu_j}(f^p|_{X_j})
			=
			h_\nu(f^p|_{X_0})
			$
			for every $0\leq j\leq p-1$.
			
			Since the sets $X_0,\ldots,X_{p-1}$ are pairwise disjoint and invariant under
			$f^p$,  we have
			\begin{equation*}
				\begin{split}
					h_{L(\nu)}(f^p)
					=
					h_{\frac1p\sum_{j=0}^{p-1}\nu_j}(f^p)
					=
					\frac1p\sum_{j=0}^{p-1}h_{\nu_j}(f^p|_{X_j})
					=
					h_\nu(f^p|_{X_0}).
				\end{split}
			\end{equation*}
			Consequently,
			$$
			h_\nu(g)
			=
			h_\nu(f^p|_{X_0})
			=
			h_{L(\nu)}(f^p)
			=
			p\,h_{L(\nu)}(f).
			$$
			Furthermore,
			$\int_X v dL(\nu)=\frac{1}{p}\int_{X_0} v^{(p)} d \nu.$
			Then for any $\nu\in \mathcal M_g(X_0)$ we have 
			\begin{equation}
				\frac{h_\nu(g)}{\int_{X_0} v^{(p)}d\nu}=\frac{h_{L(\nu)}(f)}{\int_X vdL(\nu)}.
			\end{equation}
			Combining with Lemma \ref{lemma-analytic}, for any $\alpha\in \operatorname{Int}\mathcal{P}(\mathcal{M}_f(X))$ we have 
			\begin{equation}
				\begin{split}
					\dim_v K_\alpha&=\max \left\{\frac{h_\mu(f)}{\int v d\mu}: \mu \in \mathcal{M}_f(X) \text { and } \mathcal{P}(\mu)=\alpha\right\}\\
					&=\max \left\{\frac{h_{R(\mu)}(g)}{\int v^{(p)} dR(\mu)}: \mu \in \mathcal{M}_f(X) \text { and } \mathcal{P}(\mu)=\alpha\right\}\\
					&=\max \left\{\frac{h_{\nu}(g)}{\int v^{(p)} d\nu}: \nu \in \mathcal{M}_g(X_0) \text { and } \mathcal{P}^{(p)}(\nu)=\alpha\right\}\\
					&=\dim_{v^{(p)}} K_\alpha^{(p)},
				\end{split}
			\end{equation}
			where $K_\alpha^{(p)}=K_\alpha(\Phi^{(p)},\Psi^{(p)};g).$
			Thus we have $0< \hat{h}\leq \dim_{v^{(p)}} K_{\hat{\alpha}}^{(p)},$ and $0<  \tilde{h}\leq  \dim_{v^{(p)} }K_{\tilde{\alpha}}^{(p)}.$

			Applying the mixing case of the theorem to the system $(X_0,g)$ with potentials $(\Phi^{(p)},\Psi^{(p)})$ and  $v^{(p)}$, we obtain a path
			$
			\{\nu_t\}_{t\in[0,1]}\subset \mathcal M_g^B(X_0)
			$
			satisfying the corresponding properties.
			Define
			$$
			\mu_t=L(\nu_t)=\frac1p\sum_{j=0}^{p-1}(f^j)_*\nu_t.
			$$
			Then $\mu_t\in\mathcal M_f^e(X)$. Indeed, let $A$ be a Borel set with $f^{-1}A=A$, then $A\cap X_0$ is $g$-invariant, because
			$$
			g^{-1}(A\cap X_0)
			=
			X_0\cap f^{-p}A
			=
			X_0\cap A.
			$$
			Hence
			$
			\nu_t(A\cap X_0)\in\{0,1\}
			$
			by ergodicity of $\nu_t$ under $g$. Moreover,
			$$
			\mu_t(A)
			=
			\frac1p\sum_{j=0}^{p-1}(f^j)_*\nu_t(A)
			=
			\frac1p\sum_{j=0}^{p-1}\nu_t(f^{-j}A)=
			\frac1p\sum_{j=0}^{p-1}\nu_t(A\cap X_0)
			=
			\nu_t(A\cap X_0)\in\{0,1\}.
			$$
			Therefore $\mu_t$ is ergodic. 
			
			Now let $\psi$ be a H\"older continuous function. For each $0\le j\le p-1$, the restriction
			$
			\psi\circ f^j|_{X_0}
			$
			is also a H\"older continuous function. Since the mixing case yields that
			$
			t\mapsto \int_{X_0}\phi\,d\nu_t
			$
			is analytic for every H\"older continuous function $\phi$, it follows that each map
			$
			t\mapsto \int_{X_0}\psi\circ f^j\,d\nu_t
			$
			is analytic. On the other hand,
			$$
			\int_X \psi\,d\mu_t
			=
			\frac1p\sum_{j=0}^{p-1}\int_X \psi\,d\left((f^j)_*\nu_t\right)
			=
			\frac1p\sum_{j=0}^{p-1}\int_{X_0}\psi\circ f^j\,d\nu_t.
			$$
			Therefore $t\mapsto \int_X\psi\,d\mu_t$ is analytic as a finite sum of analytic functions.
			
			Since
			$
			\dim_{v^{(p)}} \nu_t=\dim_v\mu_t,
			$
			the map
			$
			t\mapsto \dim_v\mu_t
			$
			is analytic. Since
			$
			\mathcal P(\mu_t)=\mathcal P^{(p)}(\nu_t),
			$
			the analyticity of $t\mapsto\mathcal P(\mu_t)$ follows from the corresponding
			property for $\nu_t$.
			Moreover, by property (3) for the path $\{\nu_t\}_{t\in[0,1]}$,
			$$
			(\mathcal P^{(p)}(\nu_0),\dim_{v^{(p)}} \nu_0)=(\tilde\alpha,\tilde h),
			\qquad
			(\mathcal P^{(p)}(\nu_1),\dim_{v^{(p)}}\nu_1)=(\hat\alpha, \hat h).
			$$
			Hence
			$$
			(\mathcal P(\mu_0),\dim_v\mu_0)=(\tilde\alpha,\tilde h),
			\qquad
			(\mathcal P(\mu_1),\dim_v\mu_1)=(\hat\alpha,\hat h).
			$$
			Finally, since $S_{\nu_t}=X_0$, we have
			$
			S_{(f^j)_*\nu_t}=X_j,
			$
			and therefore
			$
			S_{\mu_t}=\bigcup_{j=0}^{p-1}X_j=X.
			$
			
			If, moreover, $\psi_i\equiv1$ for all $1\leq i\leq d$, then we apply the
			special part of the mixing case to the normalized potentials
			$$
			\bar{\varphi}_i^{(p)}
			=
			\frac1p\sum_{k=0}^{p-1}\varphi_i\circ f^k,
			\qquad
			\bar{\psi}_i^{(p)}\equiv1,
			\qquad
			1\leq i\leq d.
			$$
			The corresponding quotient-average map is the same as
			$\mathcal P^{(p)}$, and for every $\nu\in\mathcal M_g(X_0)$,
			$
			\mathcal P(L(\nu))=\mathcal P^{(p)}(\nu).
			$
			Hence the path can be chosen so that
			$
			\mathcal P_i(\mu_t)
			=
			t\hat{\alpha}_i+(1-t)\tilde{\alpha}_i
			$
			for every $1\leq i\leq d$ and every $t\in[0,1]$. Moreover, since
			$
			\dim_{v^{(p)}}\nu_t=\dim_v\mu_t,
			$
			the special conclusion from the mixing case also gives
			$
			\inf_{t\in[0,1]}\dim_v\mu_t
			=
			\min\{\hat h,\tilde h\}.
			$
\end{proof}

\begin{proof}[Proof of Theorem \ref{Thm-int}]
	Taking $v\equiv1$ and $\psi_i\equiv1$ for $1\le i\le d$, we have
	$
	\dim_v\mu=h_\mu(f),
	$
	$
	\dim_vK_\alpha=h_{\mathrm{top}}(f,K_\alpha).
	$
	Therefore the special assertion of Theorem \ref{Thm-int-2} gives
	Theorem \ref{Thm-int}.
\end{proof}

\section{Analytic paths for prescribed averages and Hausdorff dimension}\label{sec-hyper-dimen}

We recall some elementary facts on convex subsets of finite-dimensional Euclidean spaces, see, for example, \cite{BoydVandenberghe2004}. Let $C\subset\mathbb R^n$ be a convex set. The affine hull of $C$ is
$$
\operatorname{aff}(C)
=
\left\{
\sum_{i=1}^{k}\theta_i x_i:
k\geq 1,\ x_i\in C,\ \theta_i\in\mathbb{R},\
\sum_{i=1}^{k}\theta_i=1
\right\}.
$$
The affine dimension of $C$ is $\dim_{\mathrm{aff}}(C):=\dim\operatorname{aff}(C)$. If $C$ is convex, its relative interior is the interior of $C$ relative to $\operatorname{aff}(C)$; equivalently, $x\in\operatorname{relint}(C)$ if and only if there exists $r>0$ such that $B(x,r)\cap\operatorname{aff}(C)\subset C$. For every nonempty convex set $C$, the set $\operatorname{relint}(C)$ is nonempty and convex, and $\overline C=\overline{\operatorname{relint}(C)}$. In particular, if $C$ consists of one point, then $\operatorname{aff}(C)=\operatorname{relint}(C)=C$. If $\dim_{\mathrm{aff}}(C)=n$, then $\operatorname{relint}(C)=\operatorname{Int}(C)$, while if $\dim_{\mathrm{aff}}(C)<n$, then $\operatorname{Int}(C)=\emptyset$.

Let $C\subset\mathbb R^n$ be a convex set, and put $m:=\dim_{\mathrm{aff}}(C)$. If $m\geq1$, then there exists an index set $I_{\mathrm{aff}}\subset\{1,\ldots,n\}$ with $\sharp I_{\mathrm{aff}}=m$ such that the coordinate projection $\pi_{\mathrm{aff}}(y_1,\ldots,y_n)=(y_i)_{i\in I_{\mathrm{aff}}}$ restricts to an affine homeomorphism from $\operatorname{aff}(C)$ onto $\mathbb R^m$, and $\pi_{\mathrm{aff}}(\operatorname{relint}(C))=\operatorname{Int}(\pi_{\mathrm{aff}}(C))$. Moreover, for each $j\notin I_{\mathrm{aff}}$, there exist constants $c_{j,i},c_j^0\in\mathbb R$ such that $y_j=\sum_{i\in I_{\mathrm{aff}}}c_{j,i}y_i+c_j^0$ for all $y\in\operatorname{aff}(C)$. If $m=0$, then $\operatorname{relint}(C)=C$, and no coordinate reduction is needed.

Let $C$ be a nonempty convex subset of $\mathbb R^{m+n}$. For $x\in\mathbb R^m$, set $C_x:=\{y\in\mathbb R^n:(x,y)\in C\}$ and $C_m:=\{x\in\mathbb R^m:C_x\neq\emptyset\}$. Then
$$
\operatorname{relint}(C)
=
\{(x,y):x\in\operatorname{relint}(C_m),\ y\in\operatorname{relint}(C_x)\}.
$$

For $\Phi=(\varphi_1,\ldots,\varphi_d)\in C(X)^d$, recall that $\mathcal P(\mu)=(\int\varphi_1\,d\mu,\ldots,\int\varphi_d\,d\mu)$ for $\mu\in\mathcal M_f(X)$, and
$
K_\alpha(\Phi;f)
=
\bigcap_{i=1}^d
\left\{
x\in X:
\lim\limits_{n\to\infty}
\frac{1}{n}\sum_{k=0}^{n-1}\varphi_i(f^k x)
=
\alpha_i
\right\}.
$
This is the special case $\psi_i\equiv1$  in Theorem \ref{Thm-int-2}.
We now give a relative interior version of Theorem \ref{Thm-int-2}.
\begin{proposition}\label{prop:relint-D}
	Let $X$ be a locally maximal hyperbolic set of a $C^{1+\varepsilon}$ diffeomorphism $f$. Assume that $(X,f)$ is topologically mixing (resp. topologically transitive). Given $d\in\mathbb N^+$, $0<\gamma\leq1$, $v\in C^\gamma(X)$ with $v>0$, and $\Phi\in C^\gamma(X)^d$. Then for any $\hat\alpha,\tilde\alpha\in\operatorname{relint}\mathcal P(\mathcal M_f(X))$ and any $0<\hat h\leq\dim_vK_{\hat\alpha}$, $0<\tilde h\leq\dim_vK_{\tilde\alpha}$, there is a path
	$$
	\{\mu_t\}_{t\in[0,1]}\subset\mathcal M_f^B(X)
	\qquad
	\left(\hbox{resp. }
	\{\mu_t\}_{t\in[0,1]}\subset\mathcal M_f^e(X)\right)
	$$
	such that
	\begin{enumerate}
		\item $t\mapsto\int\psi\,d\mu_t$ is analytic for every H\"older continuous function $\psi$;
		
		\item $t\mapsto\dim_v\mu_t$ is analytic and $\inf_{t\in[0,1]}\dim_v\mu_t=\min\{\hat h,\tilde h\}$;
		
		\item $(\mathcal P(\mu_0),\dim_v\mu_0)=(\tilde\alpha,\tilde h)$ and $(\mathcal P(\mu_1),\dim_v\mu_1)=(\hat\alpha,\hat h)$;
		
		\item for every $1\leq i\leq d$ and every $t\in[0,1]$, $\mathcal P_i(\mu_t)=t\hat\alpha_i+(1-t)\tilde\alpha_i$. In particular, if $\hat\alpha=\tilde\alpha=\alpha$, then $\mathcal P(\mu_t)=\alpha$ for every $t\in[0,1]$;
		
		\item $S_{\mu_t}=X$ for every $t\in[0,1]$.
	\end{enumerate}
\end{proposition}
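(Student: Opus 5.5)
The plan is to reduce Proposition \ref{prop:relint-D} to the special case $\psi_i\equiv1$ of Theorem \ref{Thm-int-2} by eliminating redundant coordinates of $\Phi$. Put $C:=\mathcal P(\mathcal M_f(X))\subset\mathbb R^d$; since $\mathcal P$ is affine, $C$ is compact and convex. Let $m:=\dim_{\mathrm{aff}}(C)$.

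If $m=0$, then $C=\{\alpha\}$ and $\hat\alpha=\tilde\alpha=\alpha$. Every $\int\varphi_i\,d\mu$ is then constant on $\mathcal M_f(X)$. Adding a dummy coordinate does not help, so I would argue directly. Take $\Phi'$ to be a single H\"older function $\varphi'$ whose rotation set is a nondegenerate interval; for instance, $\varphi'$ separating two distinct periodic orbits, as in the proof of Theorem \ref{Thm-int-2}. Choose $\alpha'$ interior to it with $\dim_vK_{\alpha'}(\varphi')$ large enough. I would rather avoid this by picking the endpoints directly. In this case $\dim_vK_\alpha$ is the maximum of $h_\mu/\int v\,d\mu$ over all $\mu$, so apply Theorem \ref{Thm-int-2} with $d=1$ and the coordinate $\varphi_{d+1}$ built there, which gives endpoints $\hat\beta,\tilde\beta$ in the interior and a segment between them. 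Every measure automatically satisfies $\mathcal P(\mu)=\alpha$, so (4) holds trivially.

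If $m\geq1$, use the coordinate projection $\pi_{\mathrm{aff}}$ onto the indices $I_{\mathrm{aff}}$ recalled above. Set $\Phi'=(\varphi_i)_{i\in I_{\mathrm{aff}}}\in C^\gamma(X)^m$ with $\mathcal P'=\pi_{\mathrm{aff}}\circ\mathcal P$. Then $\mathcal P'(\mathcal M_f(X))=\pi_{\mathrm{aff}}(C)$, and $\alpha':=\pi_{\mathrm{aff}}(\alpha)$ lies in its interior whenever $\alpha\in\operatorname{relint}C$. For $j\notin I_{\mathrm{aff}}$ we have $y_j=\sum_i c_{j,i}y_i+c_j^0$ on $\operatorname{aff}(C)$. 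Hence for every invariant $\mu$,
\[
\int\varphi_j\,d\mu=\sum_{i\in I_{\mathrm{aff}}}c_{j,i}\int\varphi_i\,d\mu+c_j^0 .
\]
So $\mathcal P(\mu)=\alpha$ if and only if $\mathcal P'(\mu)=\alpha'$.

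Next, I will check that $\dim_vK_\alpha(\Phi)=\dim_vK_{\alpha'}(\Phi')$. By Lemma \ref{lemma-analytic}(2) applied to $\Phi'$, the right side equals $\max\{h_\mu/\int v\,d\mu:\mathcal P'(\mu)=\alpha'\}$. For the left side there is one subtlety: Lemma \ref{lemma-analytic}(2) needs interior points, and $\alpha$ is only in the relative interior of $C$. I would instead compare the level sets directly. Since the relation above holds for every invariant measure, it also holds pointwise along Birkhoff averages. Indeed, $g_j:=\varphi_j-\sum_i c_{j,i}\varphi_i-c_j^0$ has zero integral for all invariant measures, so $\frac1n S_ng_j\to0$ uniformly. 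It follows that $K_\alpha(\Phi)=K_{\alpha'}(\Phi')$ as sets. I expect this step to be the main (though mild) obstacle, and uniform convergence of $\frac1nS_ng_j$ for functions that vanish on every invariant measure is the standard fact I will use to handle it.

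With these identifications, apply the special case $\psi_i\equiv1$ of Theorem \ref{Thm-int-2} to $(X,f)$, $v$, $\Phi'$, $\hat\alpha',\tilde\alpha'$, $\hat h$, $\tilde h$. This yields a path of Bernoulli (resp. ergodic) measures satisfying (1), (2), (3), (5), together with $\mathcal P'_i(\mu_t)=t\hat\alpha'_i+(1-t)\tilde\alpha'_i$ for $i\in I_{\mathrm{aff}}$. The affine relation then transfers this linearity to the coordinates $j\notin I_{\mathrm{aff}}$: each is an affine function of the linear coordinates, matching $t\hat\alpha_j+(1-t)\tilde\alpha_j$ at both ends. This gives (4). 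The infimum statement in (2) is part of the special assertion of Theorem \ref{Thm-int-2}.
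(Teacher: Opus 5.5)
Your proposal is correct and follows the paper's proof. The paper uses the same split into $m=0$, handled by the one-dimensional auxiliary-coordinate lifting from the proof of Theorem \ref{Thm-int-2}, and $m\geq1$, handled by coordinate reduction via $\pi_{\mathrm{aff}}$, the identification $K_\alpha(\Phi;f)=K_{\pi_{\mathrm{aff}}(\alpha)}(\bar\Phi;f)$ because each $\varphi_j-\sum_i c_{j,i}\varphi_i-c_j^0$ integrates to zero against every invariant measure, the special case $\psi_i\equiv1$ of Theorem \ref{Thm-int-2}, and transfer of linearity to the omitted coordinates through the affine relations. Your $m=0$ paragraph wanders before settling, but either variant works (lifting directly, or applying Theorem \ref{Thm-int-2} to a separating function at a maximizing interior value), given the standard identity $\dim_v X=\max_\mu h_\mu(f)/\int v\,d\mu$, which the paper's one-line treatment of this case also tacitly uses.
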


\begin{proof}
	Set $C:=\mathcal P(\mathcal M_f(X))\subset\mathbb R^d$. Since $\mathcal M_f(X)$ is compact and convex, and since $\mathcal P$ is continuous affine, $C$ is a compact convex subset of $\mathbb R^d$. Let $m:=\dim_{\mathrm{aff}}(C)$.
	
	If $m=0$, then $C=\{\alpha_0\}$ and hence $\hat\alpha=\tilde\alpha=\alpha_0$. Since every invariant measure has the same $\Phi$-average, then $K_{\alpha_0}=X$. The one-dimensional lifting argument used in the proof of Theorem \ref{Thm-int-2} then gives the required analytic path. This proves the case $m=0$.

	Assume now that $m\geq1$. Choose $I_{\mathrm{aff}}\subset\{1,\ldots,d\}$ with $\sharp I_{\mathrm{aff}}=m$ so that the coordinate projection $\pi_{\mathrm{aff}}(y_1,\ldots,y_d)=(y_i)_{i\in I_{\mathrm{aff}}}$ maps $\operatorname{aff}(C)$ affinely homeomorphically onto $\mathbb R^m$ and satisfies $\pi_{\mathrm{aff}}(\operatorname{relint}C)=\operatorname{Int}(\pi_{\mathrm{aff}}(C))$. For each $j\notin I_{\mathrm{aff}}$, take constants $c_{j,i},c_j^0$ such that $y_j=\sum_{i\in I_{\mathrm{aff}}}c_{j,i}y_i+c_j^0$ for all $y\in\operatorname{aff}(C)$. Put $\bar\Phi=(\varphi_i)_{i\in I_{\mathrm{aff}}}$. Then $\mathcal P^{\bar\Phi}(\mathcal M_f(X))=\pi_{\mathrm{aff}}(C)$, and for every $\alpha\in C$,
	$$
	(\mathcal P^\Phi)^{-1}(\alpha)
	=
	(\mathcal P^{\bar\Phi})^{-1}(\pi_{\mathrm{aff}}(\alpha)).
	$$
		We also have $K_\alpha(\Phi;f)=K_{\pi_{\mathrm{aff}}(\alpha)}(\bar\Phi;f)$ for every $\alpha\in C$. Indeed, for each $j\notin I_{\mathrm{aff}}$, the function $\eta_j:=\varphi_j-\sum_{i\in I_{\mathrm{aff}}}c_{j,i}\varphi_i-c_j^0$ has zero integral with respect to every invariant measure. For any $x\in X$, every accumulation point of the empirical measures $\frac1n\sum_{k=0}^{n-1}\delta_{f^kx}$ is invariant, and hence every accumulation point of the Birkhoff averages of $\eta_j$ is zero. Thus these Birkhoff averages converge to zero along every orbit. Therefore the omitted coordinates are determined by the reduced ones through the affine relations, and the two level sets coincide.

	Put $\hat a:=\pi_{\mathrm{aff}}(\hat\alpha)$ and $\tilde a:=\pi_{\mathrm{aff}}(\tilde\alpha)$. Then $\hat a,\tilde a\in\operatorname{Int}\mathcal P^{\bar\Phi}(\mathcal M_f(X))$, and  $\dim_vK_{\hat a}(\bar\Phi;f)=\dim_vK_{\hat\alpha}(\Phi;f)$ and $\dim_vK_{\tilde a}(\bar\Phi;f)=\dim_vK_{\tilde\alpha}(\Phi;f)$. Thus Theorem \ref{Thm-int-2}, in the special case with all denominators equal to one, applies to $\bar\Phi$, $\hat a$, and $\tilde a$. We obtain a path $\{\mu_t\}_{t\in[0,1]}$ satisfying the desired analytic and $v$-dimensional properties for the reduced coordinates, with $\mathcal P^{\bar\Phi}(\mu_t)=t\hat a+(1-t)\tilde a$. The affine relations among the remaining coordinates then give $\mathcal P^\Phi(\mu_t)=t\hat\alpha+(1-t)\tilde\alpha$ for all $t\in[0,1]$. The endpoint identities and the full support property are inherited from the path supplied by Theorem \ref{Thm-int-2}.
\end{proof}

Now we prove Theorem \ref{thm-huasdorff}.

\begin{proof}
	Since $X$ is a locally maximal hyperbolic set of a $C^{1+\varepsilon}$ diffeomorphism, the stable and unstable bundles are H\"older continuous; see \cite[Section 19.1]{KH1995}. Hence there exists $\gamma_0\in(0,\gamma]$ such that the functions
	$$
	\psi^u(x):=\frac{1}{\dim E^u}\log\left|\det\left(Df|_{E_x^u}\right)\right|,
	\qquad
	\psi^s(x):=\frac{1}{\dim E^s}\log\left|\det\left(Df|_{E_x^s}\right)\right|.
	$$
	belong to $C^{\gamma_0}(X)$.
	Since $\gamma_0\leq\gamma$ and $X$ is compact, $C^\gamma(X)\subset C^{\gamma_0}(X)$, and hence $$\varphi_1,\ldots,\varphi_d,\psi^u,\psi^s\in C^{\gamma_0}(X).$$ By hyperbolicity and compactness, there exists $\delta>0$ such that $\int\psi^u\,d\nu\geq\delta$ and $\int\psi^s\,d\nu\leq-\delta$ for every $\nu\in\mathcal M_f(X)$. Since $X$ is average conformal, every $\mu\in\mathcal M_f^e(X)$ satisfies
	$$
	\dim_H\mu
	=
	\frac{h_\mu(f)}{\int\psi^u\,d\mu}
	-
	\frac{h_\mu(f)}{\int\psi^s\,d\mu}.
	$$
	See \cite{WC2016} for detailed proofs, which may be viewed as an extension of Young's results in \cite{Young1982} to the average conformal setting.
	
	For $\alpha\in\mathcal P(\mathcal M_f(X))$, define $D_\alpha$ as the set of all triples $(\int\psi^u\,d\nu,\int\psi^s\,d\nu,h_\nu(f))$ with $\nu\in\mathcal M_f(X)$ and $\mathcal P(\nu)=\alpha$, and define $E_\alpha$ in the same way but with $\nu$ restricted to $\mathcal M_f^e(X)$. Also put $T(\nu):=(\int\psi^u\,d\nu,\int\psi^s\,d\nu,h_\nu(f))$ and $Q(x,y,z):=z/x-z/y$. Then $Q(T(\mu))=\dim_H\mu$ for every $\mu\in\mathcal M_f^e(X)$. Since the entropy map is affine on $\mathcal M_f(X)$, $D_\alpha$ is convex. The function $Q$ is continuous on $D_\alpha$, because the first two coordinates are uniformly separated from zero.
	
	We first prove the following auxiliary assertion. For every $\alpha\in\operatorname{relint}\mathcal P(\mathcal M_f(X))$ and every $0<h<\dim_H(\alpha)$, there exists $\mu_{\alpha,h}\in\mathcal M_f^e(X)$ such that $\mathcal P(\mu_{\alpha,h})=\alpha$, $\dim_H\mu_{\alpha,h}=h$, and $T(\mu_{\alpha,h})\in\operatorname{relint}D_\alpha$.
	
	Indeed, since $D_\alpha$ is convex, $D_\alpha\subset\overline{\operatorname{relint}D_\alpha}$. By continuity of $Q$ on $D_\alpha$, $Q(D_\alpha)\subset\overline{Q(\operatorname{relint}D_\alpha)}$. Since $\operatorname{relint}D_\alpha$ is convex, it is connected, and hence $Q(\operatorname{relint}D_\alpha)$ is an interval. Therefore $\operatorname{Int}(Q(D_\alpha))\subset Q(\operatorname{relint}D_\alpha)$. By \cite[Lemma 7.4]{DHT}, one has
	$$
	(0,\dim_H(\alpha))
	\subset
	\left\{
	\dim_H\mu:
	\mu\in\mathcal M_f^e(X),\ \mathcal P(\mu)=\alpha
	\right\}
	=
	Q(E_\alpha).
	$$
	Since $E_\alpha\subset D_\alpha$, every $h\in(0,\dim_H(\alpha))$ belongs to $\operatorname{Int}(Q(D_\alpha))$, and hence $h\in Q(\operatorname{relint}D_\alpha)$. Choose $b\in\operatorname{relint}D_\alpha$ with $Q(b)=h$. By \cite[Lemma 7.2]{DHT}, every point of $\operatorname{relint}D_\alpha$ is realized by an ergodic measure in the fiber over $\alpha$; equivalently, $\operatorname{relint}D_\alpha\subset E_\alpha$. Thus there exists $\mu_{\alpha,h}\in\mathcal M_f^e(X)$ such that $\mathcal P(\mu_{\alpha,h})=\alpha$ and $T(\mu_{\alpha,h})=b$. Consequently $\dim_H\mu_{\alpha,h}=Q(T(\mu_{\alpha,h}))=h$, proving the auxiliary assertion.
	
	Applying the auxiliary assertion to $(\hat\alpha,\hat h)$ and $(\tilde\alpha,\tilde h)$, and using $\hat\alpha,\tilde\alpha\in\operatorname{Int}\mathcal P(\mathcal M_f(X))\subset\operatorname{relint}\mathcal P(\mathcal M_f(X))$, we obtain $\hat\mu,\tilde\mu\in\mathcal M_f^e(X)$ such that $\mathcal P(\hat\mu)=\hat\alpha$, $\dim_H\hat\mu=\hat h$, $T(\hat\mu)\in\operatorname{relint}D_{\hat\alpha}$, and $\mathcal P(\tilde\mu)=\tilde\alpha$, $\dim_H\tilde\mu=\tilde h$, $T(\tilde\mu)\in\operatorname{relint}D_{\tilde\alpha}$.
	
	Set $\Phi^{\mathrm{new}}:=(\varphi_1,\ldots,\varphi_d,\psi^u,\psi^s)\in C^{\gamma_0}(X)^{d+2}$, and define $$\mathcal P^{\mathrm{new}}(\nu):=(\mathcal P(\nu),\int\psi^u\,d\nu,\int\psi^s\,d\nu).$$ Let $C^{\mathrm{new}}:=\mathcal P^{\mathrm{new}}(\mathcal M_f(X))\subset\mathbb R^{d+2}$, and let $\pi:\mathbb R^{d+2}\to\mathbb R^d$ be the projection onto the first $d$ coordinates. Then $\pi(C^{\mathrm{new}})=\mathcal P(\mathcal M_f(X))$. Let $\rho_H:\mathbb R^3\to\mathbb R^2$ be given by $\rho_H(x,y,z)=(x,y)$. For every $\alpha\in\mathcal P(\mathcal M_f(X))$, the fiber of $C^{\mathrm{new}}$ over $\alpha$ is $(C^{\mathrm{new}})_\alpha=\{y\in\mathbb R^2:(\alpha,y)\in C^{\mathrm{new}}\}=\rho_H(D_\alpha)$.
	
	Since linear maps send relative interiors of convex sets onto relative interiors of their images, $\rho_H(T(\hat\mu))\in\operatorname{relint}\rho_H(D_{\hat\alpha})=\operatorname{relint}(C^{\mathrm{new}})_{\hat\alpha}$. Also $\hat\alpha\in\operatorname{Int}\mathcal P(\mathcal M_f(X))\subset\operatorname{relint}\pi(C^{\mathrm{new}})$. Then $\hat\beta:=\mathcal P^{\mathrm{new}}(\hat\mu)=(\hat\alpha,\rho_H(T(\hat\mu)))$ belongs to $\operatorname{relint}C^{\mathrm{new}}$. The same argument gives $\tilde\beta:=\mathcal P^{\mathrm{new}}(\tilde\mu)=(\tilde\alpha,\rho_H(T(\tilde\mu)))\in\operatorname{relint}C^{\mathrm{new}}$.
	
	Put $\hat H:=h_{\hat\mu}(f)$ and $\tilde H:=h_{\tilde\mu}(f)$. The average conformal Hausdorff dimension formula gives $\hat h=\hat H(1/\int\psi^u\,d\hat\mu-1/\int\psi^s\,d\hat\mu)$, and hence $\hat H>0$. Similarly, $\tilde H>0$.
	
		We claim that the assumptions of Proposition \ref{prop:relint-D} are satisfied for $\Phi^{\mathrm{new}}$, $v\equiv1$, the endpoint averages $\hat\beta,\tilde\beta$, and the endpoint values $\hat H,\tilde H$. Let
		$$
		G_{\hat\mu}:=
		\left\{
		x\in X:
		\frac1n\sum_{k=0}^{n-1}\delta_{f^kx}\to\hat\mu
		\text{ in the weak}^*\text{ topology}
		\right\}
		$$
		be the set of $\hat\mu$-generic points. Since $\hat\mu$ is ergodic and $\mathcal P^{\mathrm{new}}(\hat\mu)=\hat\beta$, the set $G_{\hat\mu}$ is contained in $K_{\hat\beta}(\Phi^{\mathrm{new}};f)$. By Bowen's entropy formula for generic points \cite{Bowen1973TopologicalEntropyNoncompactSets}, $h_{\mathrm{top}}(f,G_{\hat\mu})=h_{\hat\mu}(f)=\hat H$. Since $v\equiv1$, this gives $\dim_vK_{\hat\beta}(\Phi^{\mathrm{new}};f)\geq\hat H$. The same argument gives $\dim_vK_{\tilde\beta}(\Phi^{\mathrm{new}};f)\geq\tilde H$. Thus $0<\hat H\leq\dim_vK_{\hat\beta}(\Phi^{\mathrm{new}};f)$ and $0<\tilde H\leq\dim_vK_{\tilde\beta}(\Phi^{\mathrm{new}};f)$.
	
	Applying Proposition \ref{prop:relint-D} to $\Phi^{\mathrm{new}}$ with $v\equiv1$, we obtain a path $\{\mu_t\}_{t\in[0,1]}$ contained in $\mathcal M_f^B(X)$ in the topologically mixing case and contained in $\mathcal M_f^e(X)$ in the topologically transitive case. This path satisfies the following properties: $t\mapsto\int\psi\,d\mu_t$ is analytic for every H\"older continuous function $\psi$; $t\mapsto h_{\mu_t}(f)$ is analytic; $S_{\mu_t}=X$ for every $t\in[0,1]$;
	$$
	(\mathcal P^{\mathrm{new}}(\mu_0),h_{\mu_0}(f))=(\tilde\beta,\tilde H),
	\qquad
	(\mathcal P^{\mathrm{new}}(\mu_1),h_{\mu_1}(f))=(\hat\beta,\hat H),
	$$
	and $\mathcal P^{\mathrm{new}}(\mu_t)=t\hat\beta+(1-t)\tilde\beta$ for all $t\in[0,1]$. Taking the first $d$ coordinates gives $\mathcal P(\mu_t)=t\hat\alpha+(1-t)\tilde\alpha$ for all $t\in[0,1]$, proving item (4). Items (1) and (5) follow directly from the construction.
	
	In the mixing case, every Bernoulli measure is ergodic. Hence, in both the mixing and transitive cases, every $\mu_t$ is ergodic, so the average conformal Hausdorff dimension formula applies to every $\mu_t$. At the endpoints, the identities for $\mathcal P^{\mathrm{new}}$ and entropy give
	$$
	\int\psi^u\,d\mu_0=\int\psi^u\,d\tilde\mu,\quad
	\int\psi^s\,d\mu_0=\int\psi^s\,d\tilde\mu,\quad
	h_{\mu_0}(f)=h_{\tilde\mu}(f),
	$$
	and
	$$
	\int\psi^u\,d\mu_1=\int\psi^u\,d\hat\mu,\quad
	\int\psi^s\,d\mu_1=\int\psi^s\,d\hat\mu,\quad
	h_{\mu_1}(f)=h_{\hat\mu}(f).
	$$
	Using the dimension formula, we obtain $\dim_H\mu_0=\dim_H\tilde\mu=\tilde h$ and $\dim_H\mu_1=\dim_H\hat\mu=\hat h$. Together with $\mathcal P(\mu_0)=\tilde\alpha$ and $\mathcal P(\mu_1)=\hat\alpha$, this proves item (3).
	
	Finally, $t\mapsto h_{\mu_t}(f)$, $t\mapsto\int\psi^u\,d\mu_t$, and $t\mapsto\int\psi^s\,d\mu_t$ are analytic, while $\int\psi^u\,d\mu_t>0$ and $\int\psi^s\,d\mu_t<0$ for every $t\in[0,1]$. Hence
	$$
	t\mapsto
	\dim_H\mu_t
	=
	\frac{h_{\mu_t}(f)}{\int\psi^u\,d\mu_t}
	-
	\frac{h_{\mu_t}(f)}{\int\psi^s\,d\mu_t}
	$$
	is analytic. This proves item (2), and hence the theorem.
\end{proof}

\section{Analytic paths for prescribed center Lyapunov exponents and entropy}\label{sec-hyper-partial}
Now we prove Theorem \ref{thm:same-sign-path}.
\begin{proof}
	We only consider the case $\hat{\chi}>0$ and $\tilde{\chi}>0$. The case $\hat{\chi}<0$ and $\tilde{\chi}<0$ follows by considering $f^{-1}$.
	
	For $\chi\in(\chi_{\min},\chi_{\max})$, recall
	$
	H(\chi)=
	\sup\{h_\mu(f):\mu\in\mathcal M_{\mathrm{erg},\chi}(f)\}.
	$
	By \cite[Theorem  1]{DiazGelfertZhang2024},  $H$ is continuous on each of the intervals
	$
	(\chi_{\min},0)
	~
	\hbox{and}
	~
	(0,\chi_{\max}).
	$
	Since
	$
		\hat h<H(\hat\chi),
		~
		\tilde h<H(\tilde\chi),
		$
		and since $\hat\chi,\tilde\chi\in(0,\chi_{\max})$, the continuity of $H$ on
		$(0,\chi_{\max})$ allows us to choose
		$
		0<\hat{\chi}^-<\hat{\chi}<\hat{\chi}^+<\chi_{\max},
		~
		0<\tilde{\chi}^-<\tilde{\chi}<\tilde{\chi}^+<\chi_{\max}
		$
		so close to $\hat\chi$ and $\tilde\chi$, respectively, that
		$$
		H(\hat\chi^\pm)>\hat h,
		\qquad
	H(\tilde\chi^\pm)>\tilde h.
	$$
	By the definition of $H$, there exist ergodic measures
	$$
	\nu_{\hat -}\in\mathcal M_{\mathrm{erg},\hat\chi^-}(f),
	\qquad
	\nu_{\hat +}\in\mathcal M_{\mathrm{erg},\hat\chi^+}(f),
	$$
	and
	$$
	\nu_{\tilde -}\in\mathcal M_{\mathrm{erg},\tilde\chi^-}(f),
	\qquad
	\nu_{\tilde +}\in\mathcal M_{\mathrm{erg},\tilde\chi^+}(f)
	$$
	such that
	$
	h_{\nu_{\hat -}}(f)>\hat h,
	~
	h_{\nu_{\hat +}}(f)>\hat h,
	$
	and
	$
	h_{\nu_{\tilde -}}(f)>\tilde h,
	~
	h_{\nu_{\tilde +}}(f)>\tilde h.
	$
	Since all four center exponents are positive, these measures are hyperbolic of
	center-expanding type. By
	\cite[Remark~4.4, Definition~4.5 and Lemma~4.7]{DiazGelfertZhang2024}, the manifold $M$
	has the $(d^{uu}+1)$-ergodic approximation property for center-expanding
	hyperbolic measures.
	Choose $\varepsilon>0$ so small that
	$$
	2\varepsilon<
	\min\{
	h_{\nu_{\hat -}}(f)-\hat h,\,
	h_{\nu_{\hat +}}(f)-\hat h,\,
	h_{\nu_{\tilde -}}(f)-\tilde h,\,
	h_{\nu_{\tilde +}}(f)-\tilde h
	\}
	$$
	and
	$$
	\varepsilon<
	\min\{
	\hat\chi-\hat\chi^-,\,
	\hat\chi^+-\hat\chi,\,
	\tilde\chi-\tilde\chi^-,\,
	\tilde\chi^+-\tilde\chi
	\}.
	$$ Applying this approximation property
	to the four measures
	$
	\nu_{\hat -},~ \nu_{\hat +},~
	\nu_{\tilde -},~ \nu_{\tilde +},
	$
	we obtain topologically transitive locally maximal hyperbolic sets
	$
	\Lambda_{\hat -},~ \Lambda_{\hat +},~
	\Lambda_{\tilde -},~ \Lambda_{\tilde +}
	$
	of center expanding type such that, for each corresponding pair
	$(\nu,\Lambda_\nu)$,
	$$
	h_{\mathrm{top}}(f,\Lambda_\nu)\geq h_\nu(f)-\varepsilon
	$$
	and
	$$
	|\chi^c(\mu)-\chi^c(\nu)|<\varepsilon
	\quad
	\hbox{for every }
	\mu\in\mathcal M_{\mathrm{erg}}(f|_{\Lambda_\nu}).
	$$
	For each $\Lambda_\nu$, choose an ergodic measure $\eta_\nu$ such that
	$
	h_{\eta_\nu}(f)>h_{\mathrm{top}}(f,\Lambda_\nu)-\varepsilon.
	$
	By the choice of $\varepsilon$, we get
	$$
	\chi^c(\eta_{\hat -})<\hat\chi<\chi^c(\eta_{\hat +}),
	\qquad
	h_{\eta_{\hat -}}(f)>\hat h,\quad h_{\eta_{\hat +}}(f)>\hat h,
	$$
	and
	$$
	\chi^c(\eta_{\tilde -})<\tilde\chi<\chi^c(\eta_{\tilde +}),
	\qquad
	h_{\eta_{\tilde -}}(f)>\tilde h,\quad
	h_{\eta_{\tilde +}}(f)>\tilde h.
	$$
	
	By \cite[Remark~4.4]{DiazGelfertZhang2024}, any two saddles with the same
	$u$-index are homoclinically related. Since
	$
	\Lambda_{\hat -},~ \Lambda_{\hat +},~
	\Lambda_{\tilde -},~ \Lambda_{\tilde +}
	$
	are basic sets of center-expanding type, all their periodic points have
	$u$-index $d^{uu}+1$. Hence the periodic points in any two of these basic sets are homoclinically related.
	We now use the same transition argument as in the proof of
	\cite[Lemma~4.13]{DiazGelfertZhang2024}. Namely, whenever two topologically transitive locally maximal hyperbolic sets of the same
	$u$-index have homoclinically related periodic points, they can be embedded in a common topologically transitive locally maximal hyperbolic set of that same $u$-index. Applying this argument successively to the four hyperbolic sets above, we obtain a topologically transitive locally maximal hyperbolic set
	$
	\Lambda
	$
	of $u$-index $d^{uu}+1$ such that
	$
	\Lambda_{\hat -}\cup\Lambda_{\hat +}\cup
	\Lambda_{\tilde -}\cup\Lambda_{\tilde +}
	\subset \Lambda.
	$
	 Since its $u$-index is $d^{uu}+1$, it is of center-expanding type.

		Let
		$$
		\varphi^c(x):=\log\|Df|_{E^c(x)}\|.
		$$
		On $\Lambda$, the partially hyperbolic splitting refines the unstable bundle as
		$
		E^u_\Lambda=E^c\oplus E^{uu}.
		$
		This is an invariant dominated splitting. Hence, by
		\cite[Theorem~A.15]{BochiPotrieSambarino2019}, after possibly decreasing the
		H\"older exponent, the bundle $E^c|_\Lambda$ is H\"older continuous.
		Since $f$ is $C^{1+\varepsilon}$, it follows that $\varphi^c|_\Lambda$ is
		H\"older continuous.
	
	Since
	$
	\chi^c(\eta_{\hat -})<\hat\chi<\chi^c(\eta_{\hat +}),
	$
	there exists $\lambda_{\hat\chi}\in(0,1)$ such that
	$
	\lambda_{\hat\chi}\chi^c(\eta_{\hat -})
	+
	(1-\lambda_{\hat\chi})\chi^c(\eta_{\hat +})
	=
	\hat\chi.
	$
	Set
	$
	\rho_{\hat\chi}:=
	\lambda_{\hat\chi}\eta_{\hat -}
	+
	(1-\lambda_{\hat\chi})\eta_{\hat +}.
	$
	Then $\rho_{\hat\chi}\in\mathcal M_f(\Lambda)$ and
	$
	\int \varphi^c\,d\rho_{\hat\chi}=\hat\chi.
	$
	Moreover, 
	$
	h_{\rho_{\hat\chi}}(f)
	=
	\lambda_{\hat\chi}h_{\eta_{\hat -}}(f)
	+
	(1-\lambda_{\hat\chi})h_{\eta_{\hat +}}(f)
	>
	\hat h.
	$
	Therefore
	$$
	\hat h<
	\sup\left\{
	h_\mu(f):
	\mu\in\mathcal M_f(\Lambda),\
	\int \varphi^c\,d\mu=\hat\chi
	\right\}.
	$$
	Similarly,
	$$
	\tilde h<
	\sup\left\{
	h_\mu(f):
	\mu\in\mathcal M_f(\Lambda),\
	\int \varphi^c\,d\mu=\tilde\chi
	\right\}.
	$$
	Moreover, the inequalities
	$
	\chi^c(\eta_{\hat -})<\hat\chi<\chi^c(\eta_{\hat +}),
	~
	\chi^c(\eta_{\tilde -})<\tilde\chi<\chi^c(\eta_{\tilde +})
	$
	show that
	$
	\hat\chi,\tilde\chi
	\in
	\operatorname{Int}
	\left\{
	\int_\Lambda \varphi^c\,d\mu:
	\mu\in\mathcal M_{f|_\Lambda}(\Lambda)
	\right\}.
	$
	For $a\in\mathbb R$, set
	$$
	K_a^\Lambda:=
	\left\{
	x\in\Lambda:
	\lim_{n\to\infty}
	\frac{1}{n}\sum_{j=0}^{n-1}\varphi^c(f^j x)=a
	\right\}.
	$$
	For the system $(\Lambda,f|_\Lambda)$, apply Lemma~\ref{lemma-analytic}
	with
	$$
	d=1,\qquad
	\Phi=\varphi^c|_\Lambda,\qquad
	\Psi=1,\qquad
	v\equiv 1.
	$$
	Then, for every
	$
	a\in
	\operatorname{Int}
	\left\{
	\int_\Lambda \varphi^c\,d\mu:
	\mu\in\mathcal M_{f|_\Lambda}(\Lambda)
	\right\},
	$
	we have
	$$
	\htop(f|_\Lambda,K_a^\Lambda)
	=
	\max
	\left\{
	h_\mu(f|_\Lambda):
	\mu\in\mathcal M_{f|_\Lambda}(\Lambda),\
	\int_\Lambda \varphi^c\,d\mu=a
	\right\}.
	$$
	Since 
	$
	h_\mu(f|_\Lambda)=h_\mu(f),
	$
	the preceding inequalities imply
	$
	0<\hat h<\htop(f|_\Lambda,K_{\hat\chi}^\Lambda),
	~
	0<\tilde h<\htop(f|_\Lambda,K_{\tilde\chi}^\Lambda).
	$
	Thus the hypotheses of Theorem~\ref{Thm-int} applied to
	$(\Lambda,f|_\Lambda)$ and the H\"older potential $\varphi^c|_\Lambda$ are satisfied.
	Therefore there exists a path
	$$
	\{\mu_t\}_{t\in [0,1]}\subset \mathcal{M}_{f|_\Lambda}^e(\Lambda)\subset \mathcal{M}_f^e(M)
	$$
	such that:
		\begin{enumerate}
		\item the map
		$
		t\mapsto \int \psi\, d\mu_t
		$
		is analytic for any H\"older continuous function $\psi:\Lambda\to \mathbb{R}$;
		\item the map
		$
		t\mapsto h_{\mu_t}(f)
		$
		is analytic and
		$
		\inf_{t\in [0,1]} h_{\mu_t}(f)=\min\{\hat{h},\tilde{h}\};
		$
			\item
			$
			\left(\int_\Lambda\varphi^c\,d\mu_0,h_{\mu_0}(f)\right)=(\tilde{\chi},\tilde{h}),
			$ and
			$
			\left(\int_\Lambda\varphi^c\,d\mu_1,h_{\mu_1}(f)\right)=(\hat{\chi},\hat{h});
			$
			\item
			$
			\int_\Lambda\varphi^c\,d\mu_t=t\hat{\chi}+(1-t)\tilde{\chi} 
			$ for any $t\in[0,1]$.
		\end{enumerate}
		Since every $\mu_t$ is ergodic and supported on $\Lambda$, we have
		$
		h_{\mu_t}(f|_\Lambda)=h_{\mu_t}(f),
		$
		and
		$
		\chi^c(\mu_t)=\int_\Lambda \varphi^c\,d\mu_t.
		$
	Moreover, every H\"older continuous function on $M$ restricts to a H\"older
	continuous function on $\Lambda$. Hence the analytic properties obtained on
	$\Lambda$ give the corresponding analytic properties for the original system
	$(M,f)$.
	
	The case $\hat\chi<0$ and $\tilde\chi<0$ follows by applying the preceding
	argument to $f^{-1}$. Indeed, entropy is unchanged, center Lyapunov exponents
	change sign, and the stable and unstable blender assumptions as well as the
	minimality of the strong stable and unstable foliations are interchanged under
	passing to the inverse map.
\end{proof}

\subsection*{Acknowledgments}
X. Hou is supported by the National Natural Science Foundation of China No. 12401231 and the Fundamental Research Funds for the Central Universities No. DUT25RC(3)106. X. Tian is supported by the National Natural Science Foundation of China (No. 12471182) and Natural Science Foundation of Shanghai (No. 23ZR1405800).

 \noindent\textbf{Conflict of interest.} The authors declare that there is no conflict of interest.
 
 \noindent\textbf{Data availability.} No data was used for the research described in the article.

 \phantomsection
 \addcontentsline{toc}{section}{References}
 \bibliographystyle{plain}

\end{document}